\def\Pas{\hbox{$P$-a.s.}}
\newcommand{\Ind}{I}
\numberwithin{equation}{section}
\title{Characterisation of $L^0$-boundedness 
	\\ for a general set of processes 
	\\ with no strictly positive element}
\author{
	D\'aniel \'Agoston B\'alint  \\ 
	\vspace{-0.5cm}\\
	Department of Mathematics, ETH Zurich,\\
	R\"amistrasse 101, CH-8092 Zurich, Switzerland \\
	{\tt daniel.balint@math.ethz.ch}
	\\
	\date{}
}
\begin{document}

\openup1\jot

\maketitle
\ \vskip-3\baselineskip
\ 
\begin{abstract}
	\noindent
	We consider a general set $\mathcal{X}$ of adapted nonnegative stochastic processes in infinite continuous time. $\mathcal{X}$ is assumed to satisfy mild convexity conditions, but in contrast to earlier papers need not contain a strictly positive process. We introduce two boundedness conditions on $\mathcal{X}$ --- DSV corresponds to an asymptotic $L^0$-boundedness at the first time all processes in $\mathcal{X}$ vanish, whereas NUPBR$_{\rm loc}$ states that $\mathcal{X}_t = \{ X_t : X \in \mathcal{X}\}$ is bounded in $L^0$ for each $t \in [0,\infty)$. We show that both conditions are equivalent to the existence of a strictly positive adapted process $Y$ such that $XY$ is a supermartingale for all $X \in \mathcal{X}$, with an additional asymptotic strict positivity property for $Y$ in the case of DSV.
\end{abstract}

\vskip1\baselineskip
\noindent {\bf Mathematics subject classification (2010): } 60G48, 91B02, 91G99
\\
\noindent {\bf Keywords: } $L^0$-boundedness, supermartingale, NUPBR, viability, set of wealth processes, absence of numeraire, fundamental theorem of asset pricing

\begin{center}
	\ \vskip0\baselineskip
	
	This version: \today
	
	\ \vskip-1\baselineskip
\end{center}

\newpage

\newtheorem{theorem}{Theorem}[section]
\newtheorem{ass}{Assumption}[]
\newtheorem{lemma}[theorem]{Lemma}
\newtheorem{cor}[theorem]{Corollary}
\newtheorem{prop}[theorem]{Proposition}
\theoremstyle{definition}
\newtheorem{example}[theorem]{Example}
\newtheorem{remark}[theorem]{Remark}
\newtheorem{definition}[theorem]{Definition}

\newenvironment{enume}{
	\renewcommand*{\theenumi}{B$'$}
	\renewcommand*{\labelenumi}{(B$'$)}
	\enumerate
}{
	\endenumerate
}


\section{Introduction}

Consider a general set $\mathcal{X}$ of nonnegative adapted stochastic processes on some time interval $\mathcal{I}$. The set $\mathcal{X}$ is assumed to be convex and to satisfy a so-called switching property which can be understood as a time- and $\omega$-dependent convexity property. We investigate when $\mathcal{X}$ obtains a \lq\lq best element\rq\rq{} $X^* \in \mathcal{X}$ in the sense that $X/X^*$ is well defined and a supermartingale for all $X \in \mathcal{X}$. More generally, we ask, under which conditions on $\mathcal{X}$ there exists a strictly positive $Y$ such that $XY$ is a supermartingale for all $X \in \mathcal{X}$. (Of course, if $X^* :=1/Y$ is an element of $\mathcal{X}$, it is then a best element of $\mathcal{X}$.) We manage to prove results of the following type: (i) there exists a $Y$ as above if and only if (ii) $\mathcal{X}$ satisfies some $L^0$-boundedness condition. Condition (i) is then called a \textit{dual characterisation} of the boundedness condition in (ii).

Similar questions in a less general framework where $\mathcal{X}$ consists of stochastic integrals against a fixed semimartingale integrator are common in the literature (see e.g. \cite{KK07, TS14, KKS16}). General sets $\mathcal{X}$ as above were first considered by Kabanov \cite{Kab97} and used later, among others, by Kardaras \cite{Kar13d}. All the abovementioned works crucially assume that 
\begin{equation}\label{ass}
	\text{there exists a strictly positive process $\bar{X} \in \mathcal{X}$.}
\end{equation}
Under \eqref{ass} and on a finite time interval $\mathcal{I}=[0,T]$, Kardaras \cite{Kar13d} establishes the following result: $\mathcal{X}_T = \{ X_T : X \in \mathcal{X}\}$ is bounded in $L^0$ if and only if there exists a strictly positive process $Y$ such that $XY$ is a supermartingale for every $X \in \mathcal{X}$. The above boundedness condition is known as \textit{no unbounded profit with bounded risk (NUPBR)} in the mathematical finance literature. The background of this result is that since $\mathcal{X}_T$ is convex and $L^0$-bounded, its $L^0$-closure $\bar{\mathcal{X}}_T$ is convex-compact (see \v{Z}itkovi\'{c} \cite{Zit10}) and hence has a maximal element $X^*_T$ in some sense. Due to \eqref{ass} and the switching property, the $L^0$-boundedness can be transferred backwards to $\mathcal{X}_t$ for each $t \in [0,T]$, and hence the same argumentation yields a maximal element $X^*_t$ for each $\bar{\mathcal{X}}_t$. From \eqref{ass} follows moreover that each $X^*_t$ is strictly positive. Then one carefully pastes the $1/X^*_t$ together, using again \eqref{ass}, to obtain a process $Y$ which is in turn as in (i). Note how \eqref{ass} is used not only to formulate the problem (because it allow to divide by $\bar{X}$), but also appears in several places in the proofs.

We extend this result to an infinite horizon, i.e., $\mathcal{I} = [0,\infty)$, and, most importantly, remove the assumption \eqref{ass}. For this, we use two distinct approaches. First, we adapt a boundedness condition from B\'{a}lint/Schweizer \cite[Definition~2.7]{BS18} to our framework. This condition is called \textit{dynamic share viability (DSV)} and is a generalisation of NUPBR. Second, we adopt another generalisation of NUPBR, namely the local version NUPBR$_{\rm loc}$  introduced by Chau et al. \cite{CCFM17}. Both DSV and NUPBR$_{\rm loc}$ can be applied (in fact, are designed) for the infinite-horizon framework.

If \eqref{ass} is not assumed, then each $X \in \mathcal{X}$ can reach zero. In particular, it is possible that all $X\in\mathcal{X}$ are zero at a certain random time $\hat{T}$. If this does not happen for $\omega$, we can set $\hat{T}(\omega)=\infty$. These considerations connect our two objectives in a subtle way: working with an infinite time horizon (or, more generally, on a right-open time interval), and removing \eqref{ass}. This connection becomes clearer if we think of $\hat{T}$ as the right end of a right-open random time interval. On $\llbracket \hat{T}, \infty \rrbracket$, the processes $X\in\mathcal{X}$ become and presumably should stay zero, while on $\llbracket 0, \hat{T} \llbracket$, we expect to have some processes which are strictly positive. The idea is then to focus on the interval $\llbracket 0 , \hat{T} \llbracket$; we create from $\mathcal{X}$ an auxiliary set $\mathcal{Z}$ of processes by setting each $X \in \mathcal{X}$ on $\llbracket \hat{T}, \infty \rrbracket$ equal to its left $\liminf$ at $\hat{T}$ and then taking convex and switching-closures. If there exists a process $\hat{X}$ which is strictly positive on $\llbracket 0, \hat{T} \llbracket$, then up to a few technicalities, we may assume that \eqref{ass} holds for $\mathcal{Z}$, which enables us to apply to $\mathcal{Z}$ the results and techniques from \cite{Kar13d}. A careful design of $\mathcal{Z}$ allows us to subsequently transfer the conclusions for $\mathcal{Z}$ to the original set $\mathcal{X}$. With this, we are able to establish our first main result in Theorem~\ref{thm 1}. This is a dual characterisation of DSV by the existence of an adapted process $Y$ strictly positive on $\llbracket 0, \hat{T} \llbracket$, with $\liminf_{t \upuparrows \hat{T}} \hat{X}_t Y_t >0$, and such that $XY$ is a supermartingale for each $X \in \mathcal{X}$. Note that the existence of $\hat{X}$ is considerably less restrictive than \eqref{ass}.

To deal with the most general case where \eqref{ass} is removed entirely, we show that if $0$ is absorbing for all processes in $\mathcal{X}$, then there is a sequence $(\hat{T}^n)_{n \in \mathbb{N}}$ of random times which increases to $\hat{T}$ and such that for each $n \in \mathbb{N}$, there exists a process $\hat{X}^n$ which is strictly positive on $\llbracket 0, \hat{T}^n \llbracket$. We then use a localised version of the techniques above and paste together the resulting dual processes. This involves both enlarging our filtration and projecting back after doing a construction. Our second main result Theorem~\ref{thm 2} in the general case without \eqref{ass} is then a dual characterisation of NUPBR$_{\rm loc}$ by the existence of a strictly positive adapted process $Y$ such that $XY$ is a supermartingale for each $X \in \mathcal{X}$.

The rest of the paper is structured as follows. In Section~1.1, we introduce notation. Section~1.2 defines and motivates our $L^0$-boundedness conditions DSV and NUPBR$_{\rm loc}$, Section~1.3 presents our main results, and Section~1.4 reviews the literature. The somewhat technical proofs are given in Section~2. Finally, Section~3 contains some brief comments and remarks on the case where the processes in $\mathcal{X}$ are not assumed to be adapted, the so-called non-adapted framework.


\subsection{Notation}

We work on a filtered probability space $(\Omega, \mathcal{F}, \mathbb{F}, P)$ with the filtration $\mathbb{F} = (\mathcal{F}_t)_{t \geq 0}$ satisfying the usual conditions. We assume that $\mathcal{F}_0$ is trivial. Throughout this paper, we use the convention $0/0 = 1$. For any set $\mathcal{X}$ of stochastic processes on $[0,\infty)$ and any random variable $\tau: \Omega \rightarrow [0,\infty]$, we write $\mathcal{X}_\tau := \{X_\tau : X \in  \mathcal{X}\}$ (where one, of course, must make sure that $X_\infty$ is defined on $\{\tau = \infty \}$). For a process $X$ and $t_0 \in (0,\infty]$, we denote by $\liminf_{t \upuparrows t_0} X_t$ the left limes inferior of $X$ at $t_0$ such that $t <t_0$. For any random time \mbox{$\tau : \Omega  \rightarrow [0,\infty]$}, we define the stochastic interval \mbox{$\llbracket 0, \tau \rrbracket := \{  (\omega, t) \in \Omega \times [0,\infty): 0 \leq t \leq \tau(\omega) \}$}, and analogously for \mbox{$\llbracket 0, \tau \llbracket$, $\llbracket \tau, \infty \rrbracket$}, etc. Note that every stochastic interval is a subset of $\Omega \times [0,\infty)$ so that $\llbracket \tau, \infty \rrbracket = \llbracket \tau , \infty \llbracket$.

\newpage

\begin{definition}\label{def}
	For a set $\mathcal{X}$ of stochastic processes, we consider the following conditions.
	\begin{enumerate}[$\rm (A)$]
		\item \label{a} Each $X \in \mathcal{X}$ is an adapted nonnegative \Pas{} RC process on $[0,\infty)$ with $X_0 = 1$.
		\item \label{b} There exists a strictly positive process $\bar{X} \in \mathcal{X}$.
		\item \label{c} For any $X, X' \in \mathcal{X}$ and $\alpha \in [0,1]$, the process $cc^\alpha (X, X') := (1-\alpha)X + \alpha X'$ given by the convex combination operator $cc$ is an element of $\mathcal{X}$.
		\item \label{d} For all $t \in [0,\infty)$ and $A \in \mathcal{F}_t$, all $X \in \mathcal{X}$ and all $X' \in \mathcal{X}$ with $\{X_t' = 0 \} \subseteq \{X_t = 0 \}$ on $A$,
		the process $sw^{t, A}(X, X') := \Ind_{A^c} X_\cdot + \Ind_{A} ({X_{t \wedge \cdot}}/{X'_t}) X'_{t \vee \cdot}$ given by the switching operator $sw$ is an element of $\mathcal{X}$.
	\end{enumerate}
	Consider moreover the following weakening of \eqref{b}:
	\begin{enume}
		\item \label{B} There exists an $\hat{X} \in \mathcal{X}$ with $\{\hat{X}=0\} \subseteq \{X = 0\}$ for all $X \in \mathcal{X}$.		
	\end{enume}
	We call $\mathcal{X}$ a \textit{set of processes (SP)} if it satisfies \eqref{a}, \eqref{c} and \eqref{d}; a \textit{set of processes with a dominating process (SPD)} if it satisfies in addition \eqref{B}; and a \textit{set of processes with a strictly positive process (SPP)} if it satisfies in addition \eqref{b}.
\end{definition}

In Definition~\ref{def}, every $X \in \mathcal{X}$ is defined on the infinite time interval $[0,\infty)$, normalised at $t=0$ and has some regularity properties, most notably, nonnegativity, cf. \eqref{a}. The set $\mathcal{X}$ is further assumed to be convex, see \eqref{c}, and closed under switching between processes, see \eqref{d}. In intuitive terms, the switching operator $sw^{t, A}(X, X')$ describes that we start following a process $X$ at $t=0$, and at time $t$ and if $A$ happens, we shift the entire current value $X_t$ into the alternative process $X'$ and continue following $X'$. Condition \eqref{d} is sometimes also called fork-convexity; it has been first introduced by \v{Z}itkovi\'{c}, \cite[Definition~6]{Zit02}, see also \cite{Zit10} and Remark~\ref{rmk 5}, and is motivated from mathematical finance, see Remark~\ref{rmk mf}. For a set $\mathcal{X}$ as above, we may impose the existence of either a strictly positive process $\bar{X}$, see \eqref{b}, or less restrictively, a dominating process $\hat{X}$, see \eqref{B}, or completely refrain from making any assumptions on the existence of some kind of strictly positive process. Example~\ref{ex 2} illustrates all three possibilities.

\begin{remark}[On switching]\label{rmk 5}
	Sets of processes similar to Definition~\ref{def} have first been introduced in Kabanov \cite{Kab97}, and were later studied, among others, by Kardaras \cite{Kar13d}. Crucially, both assume the existence of a strictly positive process as in \eqref{b}. Definition~\ref{def} extends these concepts to a setting where \eqref{b} does not hold, cf. SPD and SP. For this, we need to strengthen the switching property \eqref{d} in a way that accounts for the lack of a strictly positive process. To provide some context, the switching property (d) in \cite[Theorem~2.3]{Kar13d} is like \eqref{d}, but with the difference that it requires $X'>0$ instead of the inclusion \mbox{$\{X_t'  = 0\} \subseteq \{X_t = 0\}$} on $A$. Hence \eqref{d} is indeed a strengthening of (d) in \cite[Theorem~2.3]{Kar13d}. Working with \eqref{d} is a very natural choice because SP and SPD do not assume the existence of any $X'>0$. Therefore condition (d) in \cite{Kar13d} becomes void in the context of SPDs and SPs, whereas \eqref{d} is meaningful and a straightforward replacement of it.
\end{remark}

\begin{remark}[Interpretation in terms of mathematical finance]\label{rmk mf}
	A set $\mathcal{X}$ of processes as in Definition~\ref{def} can be used to model financial markets, and more precisely, to describe a set of wealth processes. This is a central modelling object in mathematical finance which describes the underlying stochastic mechanisms of a financial market. Every $X \in \mathcal{X}$ models the value evolution of a certain asset (which might consist of combinations of some basic underlying assets or value processes of investment strategies). This interpretation was used by Kabanov \cite{Kab97} and Kardaras \cite{Kar13d}.
	
	The conditions in Definition~\ref{def} can then be interpreted as follows. According to \eqref{a}, the wealth processes are nonnegative, hence either describe primary assets or value processes of general investment strategies with a solvency constraint. Moreover, they are adapted, meaning that their value only relies on available information (this is a very common assumption). The process $\bar{X}$ in \eqref{b} can be interpreted as a numeraire, also called discounter or accounting unit. Condition \eqref{c} demands that an investor is able to allocate his investment to a combination of multiple assets. Finally, the switching operator $sw$ as in \eqref{d} describes in intuitive terms that an investor starts with a wealth process $X$, and at time $t$ and if $A$ happens, she reallocates her entire current wealth $X_t$ into the alternative wealth process $X'$ and then continues holding $X'$.
	
	This setup is more general than (and contains as a special case) the usual approach where the set  $\mathcal{X}$ of wealth processes consists of stochastic integrals against a fixed semimartingale integrator, see e.g. Example~\ref{ex 2}, 1).
\end{remark}

\begin{example}\label{ex 2}
	\textbf{1)} Let $S$ be an $\mathbb{R}_+^N$-valued semimartingale, and consider
	$$
	\Theta := \{ \theta: \text{$\theta$ predictable, $S$-integrable with $\theta_0 \cdot S_0 >0$ and $\theta \cdot S = \theta_0 \cdot S_0 + \theta \bullet S \geq 0$} \},
	$$
	where $\cdot$ denotes the scalar product and $\bullet$ denotes the stochastic integral in the sense of \mbox{\cite[Chapter~III.6]{JacodShiryaev03}}. Define \mbox{$\mathcal{X} := \{ (\theta \cdot S)/(\theta_0 \cdot S_0): \theta \in \Theta \}$} and assume that there exists an $\bar{X} \in \mathcal{X}$ with $\bar{X} >0$ \Pas{} and $\bar{X}_- >0$ \Pas{} This framework is very common in the mathematical finance literature, usually with the following interpretation. $S$ is the price process of $N$ primary (undiscounted) assets, $\Theta$ is the set of all predictable, $0$-admissible, self-financing strategies and $\mathcal{X}$ is the set of time-$0$ normalised wealth processes of these strategies generated from $S$. It is standard to show that $\mathcal{X}$ satisfies \eqref{a}, \eqref{c} and \eqref{d}; moreover, by the existence of $\bar{X}$ as above, $\mathcal{X}$ is even an SPP. 
	
	\textbf{2)} Let $\tau>0$ be any stopping time with $P[\tau \neq \infty] >0$. Then \mbox{$\mathcal{X}^{(\tau)} := \{ X \Ind_{ \llbracket 0, \tau \llbracket} : X \in \mathcal{X} \}$} can be interpreted as a market similar to $\mathcal{X}$ which collapses at some unknown time $\tau$. Clearly, $\mathcal{X}^{(\tau)}$ is an SPD, but not an SPP. 
	
	\textbf{3)} Let $(\tau^n)_{n \in \mathbb{N}}$ be a sequence of stopping times such that $\tau^n < \tau$ for each $n \in \mathbb{N}$ and $\tau^n \upuparrows \tau$ for a random time $\tau >0$ with $P[\tau \neq \infty] >0$. Then \mbox{$\mathcal{X}^{(\tau-)} := \bigcup_{n \in \mathbb{N}} \mathcal{X}^{(\tau^n)}$} clearly is an SP. On the other hand,  $\mathcal{X}^{(\tau-)}$ is not an SPD. Indeed for every $X \in \mathcal{X}^{(\tau-)}$, there exists $n\in\mathbb{N}$ with $X \in \mathcal{X}^{(\tau^n)}$ and due to $\tau^{n+1} \geq \tau^n$, the process \mbox{$X' := \bar{X}\Ind_{ \llbracket 0, \tau^{n+1} \llbracket} \in \mathcal{X}^{(\tau^n)} \subseteq \mathcal{X}^{(\tau-)}$} satisfies $\{X = 0\} \not \subseteq \{ X' =0 \}$.
\end{example}

The following concept comes up later in some proofs.

\begin{definition}\label{def f}
	A set $\mathcal{X}$ of stochastic processes has \textit{bounded time horizon} if there is a (nonrandom) $T \in (0,\infty)$ such that for each $X\in \mathcal{X}$ and $t\geq T$, $X_t = X_T$.
\end{definition}

\begin{remark}\label{rmk 3}
	With regard to the time horizon, the case $[0,T]$ with $T<\infty$ embeds into and hence is a special case of the case $[0,\infty)$. Mathematically, the crucial property of a continuous time interval is whether it is right-closed (corresponding to the case $[0,T]$) or right-open (corresponding to case $[0,\infty)$). More precisely, any right-closed time interval is isomorphic to $[0,\infty]$ (in the topological sense) since for any $T \in [0,\infty)$, the continuous monotonic time-transformation $f: [0,\infty] \rightarrow [0,T]$, $f(t) := (1 - e^{-t})T$ is a bijection. Similarly, and any right-open time interval is isomorphic to $[0,\infty)$. It follows in particular that the time interval $[0,\infty]$ is a special case of $[0,\infty)$ in the above sense, even though $[0,\infty] \supset [0,\infty)$. This will be used later.
\end{remark}

\begin{definition}\label{def 1}
	For an SPD $\mathcal{X}$ with $\hat{X}$ as in \eqref{B}, define $\hat{T} := \inf\{ t\geq 0: \hat{X}_t = 0 \}$, with the usual convention that $\inf \emptyset = \infty$.
\end{definition}

Note that $\hat{T}$ does not depend on the choice of $\hat{X}$, i.e., for any two $\hat{X}, \hat{X}'$ satisfying \eqref{B}, we have $ \inf\{ t\geq 0: \hat{X}_t = 0 \} = \inf\{ t\geq 0: \hat{X}'_t = 0 \}$. Moreover, if $\mathcal{X}$ is an SPP, then $\hat{T} \equiv \infty$.


\subsection{Boundedness conditions}

We are looking for stable boundedness conditions for sets of processes as in Definition~\ref{def} which behave well for as general sets as possible, hence ideally also for SPDs and SPs. By good behaviour, we mean that the boundedness condition admits a dual characterisations in terms of martingale properties. The most prominent related result in this framework is due to Kardaras \cite{Kar13d}. In order to recall it, we need to introduce some notation.

\begin{definition}
	A stochastic process $X$ is called \textit{right-continuous (RC) in probability} if for any $t \in [0,\infty)$ and any sequence $(t^n)_{n \in \mathbb{N}}$ with $t^n \downarrow t$ as $n\rightarrow \infty$, we have \mbox{$\lim_{n\rightarrow \infty} X_{t^n} = X_t$} in probability. A stochastic process $X$ is called \textit{right-continuous with left limits (RCLL) in probability} if it is RC in probability and for any $t \in (0,\infty)$ and any sequence $(t^n)_{n \in \mathbb{N}}$ with $t^n \nearrow t$ as $n\rightarrow \infty$, $(X_{t^n})$ converges in probability as $n\rightarrow \infty$.
\end{definition}

\begin{remark}\label{rmk 1}
	We always assume that a nonnegative supermartingale which is RC in probability is \Pas{} RCLL. This is without loss of generality because over a right-continuous filtration, any supermartingale $X$ which is RC in probability has a \Pas{} RCLL version; see Lemma~\ref{lemma 5} for the case where $X$ is nonnegative.
\end{remark}

\begin{definition}
	A process $Y$, not necessarily adapted, is a \textit{supermartingale deflator (SMD)} for an SP $\mathcal{X}$ if $XY$ is a supermartingale for each $X\in\mathcal{X}$.
\end{definition}

\begin{remark}
	Note that we do not a priori require that an SMD is RC or satisfies any other path property. Therefore, for an SMD $Y$ and any $X \in \mathcal{X}$, the process $XY$ need not be RC.
\end{remark}

For the special case where $\mathcal{X}$ is an SPP which has bounded time horizon, Kardaras \mbox{\cite[Theorem~2.3]{Kar13d}} via \cite[Remark~2.4.4]{Kar13d} proves the following result.

\begin{prop}\label{thm kar}
	{\rm(Kardaras \cite[Theorem~2.3]{Kar13d})} Let $\mathcal{X}$ be an SPP which has bounded time horizon $T$. Then $\mathcal{X}_T$ is bounded in $L^0$ if and only if there exists an adapted strictly positive supermartingale deflator $Y$ which is RCLL in probability.
\end{prop}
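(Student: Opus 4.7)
The plan is to prove the two directions separately. For the easy direction (existence of $Y$ implies $L^0$-boundedness of $\mathcal{X}_T$), if $Y$ is an adapted strictly positive SMD then for each $X \in \mathcal{X}$ the nonnegative supermartingale $XY$ satisfies $E[X_T Y_T] \leq X_0 Y_0 = Y_0 < \infty$ by \eqref{a} and triviality of $\mathcal{F}_0$, so $\{X_T Y_T : X \in \mathcal{X}\}$ is bounded in $L^1$, hence in $L^0$. Because $Y_T > 0$ \Pas{}, the decomposition
$$
P[X_T > \lambda] \leq P[X_T Y_T > \lambda \epsilon] + P[Y_T < \epsilon] \leq \frac{Y_0}{\lambda \epsilon} + P[Y_T < \epsilon]
$$
is uniform in $X \in \mathcal{X}$; choosing $\epsilon$ small and then $\lambda$ large yields the required $L^0$-boundedness of $\mathcal{X}_T$.

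For the hard direction the strategy is to construct $Y$ as the reciprocal of a pathwise numeraire. First I would transfer $L^0$-boundedness from $T$ to every $t \in [0,T]$: if some $X^n \in \mathcal{X}$ and $K^n \uparrow \infty$ gave $P[X^n_t > K^n] \geq \delta > 0$, then with $A^n := \{X^n_t > K^n\} \in \mathcal{F}_t$ the switched processes $sw^{t, A^n}(X^n, \bar{X}) \in \mathcal{X}$ (valid via \eqref{d} since $\bar{X}>0$) would satisfy $sw^{t, A^n}(X^n, \bar{X})_T \geq K^n \bar{X}_T/\bar{X}_t$ on $A^n$, contradicting $L^0$-boundedness of $\mathcal{X}_T$ because $\bar{X}_T/\bar{X}_t > 0$ \Pas{}. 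Hence each $\mathcal{X}_t \subset L^0_+$ is convex and $L^0$-bounded, so by \v{Z}itkovi\'{c} \cite{Zit10} its $L^0$-closure $\bar{\mathcal{X}}_t$ is convex-compact, and I would pick a maximiser $X^*_t \in \bar{\mathcal{X}}_t$ of a strictly concave coercive functional such as $X_t \mapsto E[\log(X_t/\bar{X}_t) \wedge 1]$; strict positivity of $\bar{X}$ forces $X^*_t > 0$ \Pas{} because otherwise the candidate $\bar{X}_t$ would give a strictly larger value.

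The core obstacle is pasting the time-indexed optimisers $(X^*_t)_{t \in [0,T]}$ into a single adapted process $X^*$ such that $Y := 1/X^*$ is an SMD. The switching property \eqref{d} is tailored for this: for $s \leq t$, $X \in \mathcal{X}$ and $A \in \mathcal{F}_s$, the competitor $sw^{s, A}(X^*, X)$ has time-$t$ value $\Ind_{A^c} X^*_t + \Ind_A (X^*_s/X_s) X_t$, and first-order optimality of $X^*_t$ translates, after $L^0$-approximation since $X^*$ only lives in the closure, into the conditional inequality underlying the supermartingale property of $X/X^*$. Setting $Y := 1/X^*$ then produces a strictly positive adapted SMD. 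Finally, RCLL-in-probability regularity follows from Lemma~\ref{lemma 5} cited in Remark~\ref{rmk 1}: applied to $X = \bar{X}$, the nonnegative supermartingale $\bar{X}Y$ over the right-continuous filtration $\mathbb{F}$ admits a \Pas{} RCLL version, and dividing by $\bar{X}$ delivers an RC-in-probability representative of $Y$. I expect the pasting step, together with the measurable selection of pathwise-consistent maximisers, to be the technical heart of \cite[Theorem~2.3]{Kar13d}, since it reconciles the purely $L^0$ structure of $\bar{\mathcal{X}}_t$ with the pathwise regularity needed to turn the pointwise first-order conditions into a bona fide continuous-time supermartingale property.
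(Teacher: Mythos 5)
Note first that the paper does not prove Proposition~\ref{thm kar}: it is imported directly from Kardaras \cite[Theorem~2.3]{Kar13d} (via \cite[Remark~2.4.4]{Kar13d}), so there is no in-paper argument to compare against. Your sketch does, however, line up with the informal account the paper's introduction gives of why Kardaras's theorem holds: transfer $L^0$-boundedness from $T$ backwards to every $t\in[0,T]$ by switching against the strictly positive $\bar X$; invoke \v{Z}itkovi\'c's convex compactness to extract a maximiser $X^*_t\in\bar{\mathcal{X}}_t$; use $\bar X>0$ to force $X^*_t>0$; paste the $1/X^*_t$ into an adapted SMD $Y$; and read off the regularity. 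Your argument for the easy direction (existence of an adapted strictly positive SMD implies $L^0$-boundedness of $\mathcal{X}_T$) is complete and correct.

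As a proof, though, the sketch still has genuine gaps. The central one you yourself flag: the pasting of the time-indexed optimisers into a single adapted process on which the supermartingale inequality holds simultaneously for all $s\le t$ and all $X\in\mathcal{X}$ is never carried out, and it is precisely the substantive content of \cite[Theorem~2.3]{Kar13d}, not a finishing touch. Second, the specific functional $E[\log(X_t/\bar X_t)\wedge 1]$ is flat for $X_t\ge e\,\bar X_t$, so its (super)gradient vanishes there. The first-order inequality you obtain from a maximiser $X^*_t$ therefore only constrains competitors on $\{X^*_t<e\,\bar X_t\}$ and does not yield $E[Z_t/X^*_t]\le 1$ unconditionally; one would need a strictly increasing concave objective (with a separate integrability or truncation-passage argument) to get the full supermartingale-type inequality. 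Third, the RCLL-in-probability step is not quite closed as stated: Lemma~\ref{lemma 5} gives a \Pas{} RCLL version of the supermartingale $\bar X Y$, but $\bar X$ is only assumed RC, so dividing that RCLL version by $\bar X$ yields an RC representative of $Y$ without automatically producing left limits in probability. These gaps mean the proposal is a plausible plan reconstructing Kardaras's strategy, but not a self-contained proof of the cited result.
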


\begin{remark}
	\cite[Theorem~2.3]{Kar13d} is in fact slightly more general than Proposition~\ref{thm kar}. First, it does not require the processes $X \in \mathcal{X}$ to be adapted. A supermartingale then must be understood in some generalised sense, and the \lq\lq generalised\rq\rq{} supermartingale deflator $Y$ in the dual condition is then of course also not adapted. In Section~\ref{appendix 2} we give some further remarks on this so called \lq\lq non-adapted framework\rq\rq. Second, \cite[Theorem~2.3]{Kar13d} requires a weaker switching property than \eqref{d}, as explained in Remark~\ref{rmk 5}.
\end{remark}

It is immediately apparent from the statement of Proposition~\ref{thm kar} that it cannot be directly extended to the general, infinite-horizon setting of Definition~\ref{def 1} as the boundedness condition is stated for some bounded time horizon $T$. Moreover, Example~\ref{ex 1} below shows that Proposition~\ref{thm kar} is not true if \eqref{b} does not hold, i.e., if $\mathcal{X}$ does not contain a strictly positive element. This in particular means that Proposition~\ref{thm kar} cannot be applied for an SPD (or SP) without further adjustment.

\begin{example}\label{ex 1}
	Let $T \in (0,\infty)$, $\mathcal{X}^0 : = \{ (X_t) : X_t := (1+xt) \Ind_{ \llbracket 0, T \llbracket}(t), x, t \in \mathbb{R}_+ \}$ and define $\mathcal{X}$ as the smallest set containing $\mathcal{X}^0$ such that \eqref{c} and \eqref{d} hold. Then $\mathcal{X}$ clearly satisfies \eqref{a}, \eqref{B} with e.g.~$\hat{X} = \Ind_{ \llbracket 0, T \llbracket}$, \eqref{c} and \eqref{d} (hence is an SPD), but does not satisfy \eqref{b} as all $X \in \mathcal{X}$ are equal to zero at $T$. Also, $\mathcal{X}$ has bounded time horizon $T$ and $\mathcal{X}_T = \{0\}$ is trivially bounded in $L^0$.  On the other hand, there does not exist any strictly positive supermartingale deflator for $\mathcal{X}$. Indeed, observe that the set $\mathcal{Z} := \{ X_{(T/2) \wedge \cdot} : X \in \mathcal{X}\}$ is an SPP with bounded time horizon $T/2$, but $\mathcal{Z}_{T/2} = \mathcal{X}_{T/2}$ is not bounded in $L^0$ and hence there does not exist any strictly positive supermartingale deflator for $\mathcal{Z}$ by Proposition~\ref{thm kar}. Since any strictly positive supermartingale deflator $Y$ for $\mathcal{X}$ gives $Y_{T/2 \wedge \cdot}$ as a strictly positive supermartingale deflator for $\mathcal{Z}$, there cannot exist any strictly positive supermartingale deflator for $\mathcal{X}$ either.
\end{example}

The reason why the above example works is that if $\mathcal{X}$ is not an SPP, the boundedness of $\mathcal{X}_T$ in $L^0$ does not imply any boundedness of $\mathcal{X}_t$ for $t\in (0,T)$. To adjust for this unnatural behaviour, we adapt two boundedness conditions for our generalised setting to be more robust. First, we recall from Chau et al. \cite[Chapter~2.2]{CCFM17} the following definition which is a well-known condition in mathematical finance.

\begin{definition}\label{def nupbr}
	An SP $\mathcal{X}$ satisfies \textit{no unbounded profit with bounded risk locally (NUPBR$_{\rm loc}$)} if for all $t \in [0,\infty)$, $\mathcal{X}_t$ is bounded in $L^0$.
\end{definition}

Lemma~\ref{lemma 2} (b) shows that if $\mathcal{X}$ is an SPP which has bounded time horizon $T$, then $\mathcal{X}$ satisfies NUPBR$_{\rm loc}$ if and only if $\mathcal{X}_T$ is bounded in $L^0$. In particular, Proposition~\ref{thm kar} can be rephrased accordingly. In general, NUPBR$_{\rm loc}$ is of course a stronger condition than the boundedness of $\mathcal{X}_T$ in $L^0$ for some $T \in (0,\infty)$; note that the set $\mathcal{X}$ in Example~\ref{ex 1} does not satisfy NUPBR$_{\rm loc}$ even though $\mathcal{X}_T$ is bounded in $L^0$.

Our second boundedness condition is motivated by B\'alint/Schweizer \cite{BS18}. Whereas Definition~\ref{def nupbr} requires boundedness for each $t \geq 0$ and is therefore a local condition, the idea here is to establish a global condition by controlling the asymptotic behaviour at $\hat{T}$ of the processes $X \in \mathcal{X}$.

\begin{definition}\label{def dsv}
	An SPD $\mathcal{X}$ satisfies \textit{dynamic share viability (DSV) for $\hat{X}$} as in \eqref{B} if 
	$$
	\left\{\liminf_{t \upuparrows \hat{T}} \frac{X_t}{\hat{X}_t}: X \in \mathcal{X} \right\} \text{ is bounded in } L^0.
	$$
\end{definition}

\begin{remark}
	The terminology \lq\lq dynamic share viability\rq\rq{} is also used in \cite{BS18} and has its origins in the mathematical finance literature. In the context of the mathematical finance interpretation given in Remark~\ref{rmk mf}, one understands the terminology as follows. \lq\lq Dynamic\rq\rq{} refers to the fact that investors are able to use dynamically changing investment strategies due to condition \eqref{d}; \lq\lq share\rq\rq{} expresses that we describe the value of an investment $X\in\mathcal{X}$ in shares (in proportions) of a distinguished process $\hat{X}$; and \lq\lq viability\rq\rq{} is a common term to refer to a regularity property of a market modelled by $\mathcal{X}$.
\end{remark}

DSV was introduced in \cite[Definition~2.7]{BS18} for the set $\mathcal{X}$ from Example~\ref{ex 2}, 1) which is a special case of our framework. Definition~\ref{def dsv} is a generalisation of \cite[Definition~2.7]{BS18}; indeed, if $\mathcal{X}$ is as in Example~\ref{ex 2}, 1), the two definitions are equivalent, see Lemma~\ref{lemma 6}.

The concept of DSV also generalises NUPBR$_{\rm loc}$. Indeed, if $\mathcal{X}$ is an SPP which has bounded time horizon and $\bar{X}$ is as in \eqref{b}, then DSV for $\bar{X}$ is equivalent to NUPBR$_{\rm loc}$; see Lemma~\ref{lemma 2} (a) and (c). If $\mathcal{X}$ is only an SPD or does not have bounded time horizon, DSV is strictly stronger than NUPBR$_{\rm loc}$; see Lemma~\ref{lemma 2} (a) and Example~\ref{ex 3}. In general, DSV depends on the choice of $\hat{X}$; in fact, if $\hat{X}^1 \leq \hat{X}^2$, then DSV for $\hat{X}^1$ implies DSV for $\hat{X}^2$, but not necessarily the other way around.


\subsection{The main results}

We are now able to present our results. The proofs are deferred to Section~\ref{sec prof}. Our first main result is a dual characterization of DSV for $\hat{X}$ in terms of a supermartingale deflator with an additional asymptotic positivity property at \mbox{$\hat{T} := \inf\{ t\geq 0: \hat{X}_t = 0 \}$}.

\begin{theorem}\label{thm 1}
	Let $\mathcal{X}$ be an SPD with $\hat{X}$ as in \eqref{B} and $\hat{T}$ as in Definition~\ref{def 1}. Then the following are equivalent:
	\begin{enumerate}[$\rm (1)$]
		\item $\mathcal{X}$~satisfies DSV for $\hat{X}$.
		\item There exists an SMD $\thinspace Y$ which is adapted, \Pas{} RC, strictly positive on $\llbracket 0, \hat{T} \llbracket$ and satisfies $\liminf_{t \upuparrows \hat{T}} \hat{X}_t Y_t >0$ \Pas
	\end{enumerate}
\end{theorem}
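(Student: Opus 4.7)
I plan to prove the two implications separately, with (1) $\Rightarrow$ (2) carrying the real content. For (2) $\Rightarrow$ (1), suppose $Y$ is an SMD with $\liminf_{t\upuparrows \hat{T}} \hat{X}_t Y_t > 0$ \Pas{} For each $X \in \mathcal{X}$, $XY$ is a nonnegative supermartingale and hence has \Pas{} left limits everywhere. By the supermartingale inequality and conditional Fatou,
\[
E\bigl[\liminf_{t \upuparrows \hat{T}} X_t Y_t\bigr] \leq \liminf_{t \upuparrows \hat{T}} E[X_t Y_t] \leq E[X_0 Y_0] = Y_0,
\]
so $\{\liminf_{t \upuparrows \hat{T}} X_t Y_t : X \in \mathcal{X}\}$ is uniformly $L^1$-bounded and a fortiori $L^0$-bounded. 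Writing $X_t/\hat{X}_t = (X_t Y_t)/(\hat{X}_t Y_t)$ on $\llbracket 0, \hat{T} \llbracket$ and combining this with the asymptotic positivity of $\hat{X} Y$, a routine $\varepsilon$-$\delta$ argument transfers $L^0$-boundedness from the numerator to the ratio, which is exactly DSV.

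For (1) $\Rightarrow$ (2) I would follow the strategy outlined in the introduction: build an auxiliary set $\mathcal{Z}$ by dividing out $\hat{X}$, apply Proposition~\ref{thm kar} (or the techniques behind it) to $\mathcal{Z}$, and pull the resulting deflator back. Concretely, for each $X \in \mathcal{X}$ set
\[
Z^X_t := \tfrac{X_t}{\hat{X}_t} \Ind_{\llbracket 0, \hat{T} \llbracket}(t) + \Bigl(\liminf_{s \upuparrows \hat{T}} \tfrac{X_s}{\hat{X}_s}\Bigr) \Ind_{\llbracket \hat{T}, \infty \rrbracket}(t)
\]
(with the convention $0/0 = 1$) and let $\mathcal{Z}$ be the smallest subset containing these processes and closed under the operators $cc^\alpha$ and $sw^{t,A}$ of \eqref{c}--\eqref{d}. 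By construction $\hat{X}/\hat{X} \equiv 1 \in \mathcal{Z}$, so $\mathcal{Z}$ is an SPP; DSV for $\hat{X}$ translates into $L^0$-boundedness of $\mathcal{Z}_{\hat{T}}$; and since every $Z \in \mathcal{Z}$ is constant from $\hat{T}$ onwards, a pathwise random time change (per Remark~\ref{rmk 3}) turns $\mathcal{Z}$ into an SPP with bounded deterministic time horizon, to which Proposition~\ref{thm kar} applies and yields an adapted strictly positive SMD $Y^{\mathcal{Z}}$, which we may take to be \Pas{} RCLL by Lemma~\ref{lemma 5}. Undoing the time change, set $Y_t := Y^{\mathcal{Z}}_t / \hat{X}_t$ on $\llbracket 0, \hat{T}\llbracket$ and $Y_t := 0$ on $\llbracket \hat{T}, \infty \rrbracket$. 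Since $X_t Y_t = Z^X_t Y^{\mathcal{Z}}_t$ on $\llbracket 0, \hat{T}\llbracket$ and $X_t Y_t = 0$ on $\llbracket \hat{T}, \infty\rrbracket$ by \eqref{B}, a short computation conditioning across $\hat{T}$ shows that $XY$ is a supermartingale. Adaptedness and \Pas{} right-continuity of $Y$ are inherited from $Y^{\mathcal{Z}}$ and $\hat{X}$; strict positivity of $Y$ on $\llbracket 0, \hat{T}\llbracket$ is immediate; and since $\hat{X}_t Y_t = Y^{\mathcal{Z}}_t$ there, the required $\liminf_{t\upuparrows \hat{T}} \hat{X}_t Y_t > 0$ reduces to strict positivity of the left limit of $Y^{\mathcal{Z}}$ at its terminal time, which follows from the supermartingale inequality $E[Y^{\mathcal{Z}}_t \mid \mathcal{F}_{t-}] \leq Y^{\mathcal{Z}}_{t-}$ together with strict positivity of $Y^{\mathcal{Z}}$.

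The main obstacle I anticipate is making the construction of $\mathcal{Z}$ precise. Extension by $\liminf$ at the random boundary $\hat{T}$ interacts non-trivially with the switching operator \eqref{d}: different processes in $\mathcal{X}$ may have incompatible zero-sets near $\hat{T}$, and the liminf is not a continuous functional on paths, so one must verify that closing under $cc^\alpha$ and $sw^{t,A}$ does not leave the tractable class of normalised extensions, and that an SMD obtained for the possibly enlarged $\mathcal{Z}$ still validates the supermartingale property for the originals after multiplying by $\hat{X}$. Combined with the technicalities of the random time change needed to invoke Proposition~\ref{thm kar} (particularly the interplay between the stopping time $\hat{T}$ and the pathwise bijection of Remark~\ref{rmk 3}), this is where I expect the bulk of the work to lie.
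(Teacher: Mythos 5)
Your strategy for \lq\lq (1) $\Rightarrow$ (2)\rq\rq{} is essentially the paper's: build $\mathcal{Z}$ by dividing out $\hat{X}$, extend by left liminf at $\hat{T}$, close under the operators, invoke the Kardaras result, and pull the deflator back by $Y := Y^{\mathcal{Z}}/\hat{X}$ on $\llbracket 0, \hat{T} \llbracket$. Two points of imprecision are worth flagging. First, there is no random time change in the paper's argument, and you should not introduce one: Remark~\ref{rmk 3} concerns a \emph{deterministic} monotone bijection between $[0,\infty]$ and a finite interval. The paper extends each $Z(\omega)$ from $[0,\infty)$ to the right-closed interval $[0,\infty]$ (constant from $\hat{T}(\omega)$ onwards) and then treats the whole family on the fixed time set $[0,\infty]$; $\hat{T}$ plays no role in any time change. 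A pathwise (random) time change sending $\hat{T}$ to a deterministic endpoint would raise genuine filtration and measurability issues (usual conditions, supermartingale property after time change) that the paper deliberately sidesteps. Second, the asymptotic positivity $\liminf_{t \upuparrows \hat{T}} \hat{X}_t Y_t >0$ reduces not to the fixed-$t$ inequality $E[Y^{\mathcal{Z}}_t \mid \mathcal{F}_{t-}] \leq Y^{\mathcal{Z}}_{t-}$ but to the minimum principle for nonnegative supermartingales applied at the \emph{random} time $\hat{T}$: since $Y^{\mathcal{Z}}$ is strictly positive on all of $[0,\infty]$, including the terminal point, it never hits zero, hence its left limits are strictly positive everywhere, and in particular at $\hat{T}$.

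Your \lq\lq (2) $\Rightarrow$ (1)\rq\rq{} contains a genuine error. The chain
\[
E\bigl[\liminf_{t \upuparrows \hat{T}} X_t Y_t\bigr] \leq \liminf_{t \upuparrows \hat{T}} E[X_t Y_t] \leq E[X_0Y_0]=1
\]
is ill-formed because $\hat{T}$ is random (the middle expression has no meaning), and the conclusion it tries to reach is in fact \emph{false}. For a counterexample, take $U$ uniform on $(0,1)$, $\hat{T}=U$, and $M_t := \Ind_{\{U>t\}}/(1-t)$: this is a nonnegative RCLL martingale with $M_0=1$, yet $M_{\hat{T}-}=1/(1-U)$ has $E[M_{\hat{T}-}]=\infty$. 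So there is no reason for $\{\liminf_{t\upuparrows\hat{T}} X_tY_t : X\in\mathcal{X}\}$ to be $L^1$-bounded, and one cannot obtain $L^0$-boundedness as a corollary of $L^1$-boundedness. The correct route, which the paper follows, is to prove $L^0$-boundedness directly: set $\tau^N := \inf\{t\geq 0: X_tY_t\geq N\}\wedge\hat{T}$, observe that on $\{\liminf_{t\upuparrows\hat{T}} X_tY_t > N\}$ one has $\tau^N<\hat{T}$ and $X_{\tau^N}Y_{\tau^N}\geq N$ by right-continuity, and apply optional stopping to the right-closable nonnegative supermartingale $XY$ on $[0,\infty]$ to deduce $N\, P[\liminf_{t\upuparrows\hat{T}} X_tY_t > N] \leq E[X_{\tau^N}Y_{\tau^N}]\leq 1$, a bound uniform in $X$. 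The step where you rewrite $X_t/\hat{X}_t = (X_tY_t)/(\hat{X}_tY_t)$ and transfer $L^0$-boundedness through the a.s.\ finite and strictly positive factor $\liminf_{t\upuparrows\hat{T}}\hat{X}_tY_t$ is correct and in the spirit of the paper's Lemma~\ref{lemma liminf}, but it needs to be preceded by the optional-stopping $L^0$-bound, not the Fatou $L^1$-bound. Finally, your anticipation of the main technical obstacle is accurate: the paper spends Claims 1--5 showing that the class of liminf-extensions is invariant under $cc$ and $sw$, that each element of $\mathcal{Z}$ is $[0,\infty]$-valued (using DSV or (2) to get a.s.\ finiteness of the liminf), and that $\mathcal{Z}_\infty$ is $L^0$-bounded iff DSV for $\hat{X}$ holds.
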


\begin{remark}
	In mathematical finance, a result of the type of \mbox{Theorem~\ref{thm 1}} is referred to as a \textit{fundamental theorem of asset pricing (FTAP)}. Note that both of our main results, Theorem~\ref{thm 1} and Theorem~\ref{thm 2}, are FTAP results.
\end{remark}

We summarize the implications of Theorem~\ref{thm 1} for the special case in \mbox{Example~\ref{ex 2}}, 1). Let $S$, $\Theta$, $\mathcal{X}$ be as in Example~\ref{ex 2}, 1) and let $\eta \in \Theta$ be such that $\eta \geq0$, $\eta \cdot S>0$,  $\eta \cdot S_- >0$ and $S/(\eta \cdot S)$ is bounded uniformly in $t\geq0$, \Pas{}. Then Theorem~\ref{thm 1} via Lemma~\ref{lemma 6} implies the following: $S$ satisfies DSV for $\eta$ if and only if there exists an SMD $Y$ which is adapted, strictly positive and satisfies $\liminf_{t \rightarrow \infty} \big(\eta_t \cdot (S_t Y_t)\big)> 0$ \Pas{} This recovers \mbox{\cite[Theorem~2.11]{BS18}}, and hence Theorem~\ref{thm 1} generalises \cite[Theorem~2.11]{BS18}, also providing an alternative proof for that result.

Under a mild additional assumption, we can say even more. Indeed, if we strengthen \mbox{Theorem~\ref{thm 1}, (1)} by additionally assuming that $0$ is an absorbing state, meaning that \mbox{$\{X_s = 0\} \subseteq \{X_t = 0\}$} for each $X \in \mathcal{X}$ and $0 \leq s < t <\infty$, then we can obtain in (2) an SMD which is strictly positive on the whole interval $[0,\infty)$. Note that the existence of a strictly positive SMD as in (2) already implies that $0$ is an absorbing state; this, as we shall see, is a consequence of Proposition~\ref{prop 1} via Remark~\ref{rmk 2}, 2).

\begin{cor}\label{cor 2}
	Let $\mathcal{X}$ be an SPD with $\hat{X}$ as in \eqref{B} and $\hat{T}$ as in Definition~\ref{def 1}. Then the following are equivalent:
	\begin{enumerate}[$\rm (1)$]
		\item $\mathcal{X}$~satisfies DSV for $\hat{X}$ and $0$ is an absorbing state.
		\item There exists an SMD $\thinspace Y$ which is adapted, \Pas{} RC, strictly positive and satisfies $\liminf_{t \upuparrows \hat{T}} \hat{X}_t Y_t >0$ \Pas
	\end{enumerate}
\end{cor}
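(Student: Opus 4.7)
The corollary is a strengthening of Theorem~\ref{thm 1} in which the sole new content is the equivalence between the absorbing-state condition on~$\mathcal{X}$ and the strict positivity of the SMD on all of $[0,\infty)$ (rather than just on $\llbracket 0, \hat{T} \llbracket$). My plan is to deduce both directions from Theorem~\ref{thm 1} by a short direct argument.

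For $(2)\Rightarrow(1)$: the hypothesis $Y>0$ everywhere trivially implies $Y>0$ on $\llbracket 0, \hat{T} \llbracket$, so Theorem~\ref{thm 1} yields DSV for $\hat{X}$. To get that $0$ is absorbing, fix $X\in\mathcal{X}$, times $0\leq s\leq t$, and $A\in\mathcal{F}_s$ with $A\subseteq\{X_s=0\}$. Then $(XY)_s\Ind_A=0$, and since $XY\geq0$ is a supermartingale, $E[(XY)_t\Ind_A]\leq E[(XY)_s\Ind_A]=0$, so $(XY)_t\Ind_A=0$ \Pas{}; strict positivity of $Y_t$ then forces $X_t\Ind_A=0$ \Pas

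For $(1)\Rightarrow(2)$: apply Theorem~\ref{thm 1} to obtain an SMD $Y$ that is adapted, \Pas{} RC, strictly positive on $\llbracket 0, \hat{T} \llbracket$, with $\liminf_{t \upuparrows \hat{T}} \hat{X}_t Y_t>0$. The key observation is that on $\llbracket \hat{T}, \infty \rrbracket$ every $X\in\mathcal{X}$ is identically zero: indeed, $\hat{X}$ is \Pas{} RC and nonnegative with $\hat{T}=\inf\{t\geq0:\hat{X}_t=0\}$, hence $\hat{X}_{\hat{T}}=0$ on $\{\hat{T}<\infty\}$; by \eqref{B}, $X_{\hat{T}}=0$ for every $X\in\mathcal{X}$; and since $0$ is absorbing, $X_t=0$ for all $t\geq\hat{T}$. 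Noting that $\hat{T}$ is a stopping time (debut of the closed set $\{0\}$ by the adapted RC process $\hat{X}$ in the right-continuous filtration~$\mathbb{F}$), I define
$$
Y'_t := Y_t\,\Ind_{\{t<\hat{T}\}} + \Ind_{\{t\geq\hat{T}\}}.
$$
This $Y'$ is adapted since $\{t<\hat{T}\}\in\mathcal{F}_t$, strictly positive on $[0,\infty)$ (equal to $Y>0$ on $\llbracket 0, \hat{T} \llbracket$ and to $1$ on $\llbracket \hat{T}, \infty \rrbracket$), and \Pas{} RC (checking path-by-path: for $t<\hat{T}$ use RC of $Y$; at and beyond $\hat{T}$ the process is constant~$1$).

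It remains to verify that $Y'$ is still an SMD with the required asymptotic property. Because $X_t=0$ on $\llbracket \hat{T}, \infty \rrbracket$, the processes $XY'$ and $XY$ are pathwise identical, so $XY'$ inherits the supermartingale property from $XY$; similarly, $\hat{X}_tY'_t=\hat{X}_tY_t$ for $t<\hat{T}$, so $\liminf_{t \upuparrows \hat{T}} \hat{X}_tY'_t=\liminf_{t \upuparrows \hat{T}} \hat{X}_tY_t>0$ \Pas{} The only conceptual obstacle is convincing oneself that no SMD or supermartingale property is broken by the redefinition on $\llbracket \hat{T}, \infty \rrbracket$; the absorbing assumption resolves this at once because every $X\in\mathcal{X}$ already vanishes there. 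Everything else is routine measurability and pathwise bookkeeping.
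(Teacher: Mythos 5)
Your proof is correct and follows essentially the same approach as the paper for the nontrivial direction $(1)\Rightarrow(2)$: apply Theorem~\ref{thm 1}, then modify the resulting SMD on $\llbracket \hat{T}, \infty \rrbracket$ to be $1$ there, which is harmless since $0$ absorbing forces every $X\in\mathcal{X}$ to vanish on $\llbracket \hat{T}, \infty \rrbracket$. The only deviation is in $(2)\Rightarrow(1)$: the paper dispatches it by invoking Proposition~\ref{prop 1} (via Remark~\ref{rmk 2}, 2)) together with Theorem~\ref{thm 1}, whereas you give a short self-contained argument deriving the absorbing property directly from the supermartingale inequality and the strict positivity of $Y$; both are valid, and your version is a touch more elementary at the cost of re-deriving a consequence the paper already packaged in Proposition~\ref{prop 1}.
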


Theorem~\ref{thm 1} applies to any SPD, and hence is already more general than Proposition~\ref{thm kar}. To consider the most general case where $\mathcal{X}$ is an SP, we need to develop a better understanding of the general structure of SPs. We start with the following crucial observation: Every SP can be seen as a limit of SPDs; hence SPs and SPDs are \lq\lq not too far away\rq\rq{} from each other. This is made precise in the next proposition; compare Remark~\ref{rmk 2}, 1).

\begin{prop}\label{prop 1}
	Let $\mathcal{X}$ be an SP and consider the following two conditions:
	\begin{enumerate}[$\rm (a)$]
		\item $0$ is an absorbing state.
		\item There exists an SMD $Y$ for $\mathcal{X}$ such that $\{Y=0\} \subseteq \{X= 0\}$ for each $X \in \mathcal{X}$.
	\end{enumerate}
	If either $\rm (a)$ or $\rm (b)$ holds, then there exists a \Pas{} unique stopping time $\widetilde{T}$ such that \mbox{$X = X\Ind_{\llbracket 0, \widetilde{T} \llbracket}$} \Pas{} for all $X \in \mathcal{X}$ and there exists a sequence $(\hat{X}^n)_{n \in \mathbb{N}} \subseteq \mathcal{X}$ with cemetery times $\hat{T}^n := \inf\{t\geq0: \hat{X}^n=0\}$ such that $\hat{T}^n \nearrow \widetilde{T}$ as $n \rightarrow \infty$.
\end{prop}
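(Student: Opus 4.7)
The plan is to reduce (b) to (a) via the fixed-time supermartingale inequality, then construct $\widetilde{T}$ as the essential supremum of the family of cemetery times of individual elements of $\mathcal{X}$, using convex combinations to get upward directedness.

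First, I would show that (b) implies (a). With $Y$ nonnegative and $XY$ a supermartingale, the fixed-time inequality $E[X_t Y_t \mid \mathcal{F}_s] \le X_s Y_s$ vanishes on $\{X_s = 0\}$, forcing $X_t Y_t = 0$ a.s. there. The inclusion $\{Y = 0\} \subseteq \{X = 0\}$ then yields $X_t = 0$ a.s. on $\{X_s = 0\}$. So it suffices to work under (a).

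Under (a), for each $X \in \mathcal{X}$ define $T_X := \inf\{t \ge 0 : X_t = 0\}$; this is a stopping time (debut of the closed set $\{0\}$ for a right-continuous adapted process). Combining absorption with right-continuity first at rational times and then extending by right-continuity yields pathwise absorption, so that $X = X \Ind_{\llbracket 0, T_X \llbracket}$ a.s. The decisive point is that $\mathcal{T} := \{T_X : X \in \mathcal{X}\}$ is directed upward: for any $X, X' \in \mathcal{X}$, the convex combination $Z := cc^{1/2}(X, X') = (X + X')/2 \in \mathcal{X}$ (by (C)) vanishes exactly where both $X$ and $X'$ do, hence under (a) we have $T_Z = T_X \vee T_{X'}$. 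Setting $\widetilde{T} := \esssup \mathcal{T}$, the classical essential-supremum result for upward-directed families supplies a sequence $(\hat{X}^n) \subseteq \mathcal{X}$ whose cemetery times $\hat{T}^n := T_{\hat{X}^n}$ increase a.s.\ to $\widetilde{T}$; in particular $\widetilde{T}$ is a stopping time as an increasing limit of stopping times, and $T_X \le \widetilde{T}$ for every $X$ gives $X = X \Ind_{\llbracket 0, \widetilde{T} \llbracket}$ a.s.

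Uniqueness follows from a two-sided inequality: any competitor $\widetilde{T}'$ satisfying both properties must, by its first property, satisfy $T_X \le \widetilde{T}'$ for every $X$, forcing $\widetilde{T} \le \widetilde{T}'$; conversely, the first property for $\widetilde{T}$ forces the accompanying sequence $(\hat{T}^{n,\prime})$ of $\widetilde{T}'$ to consist of cemetery times bounded by $\widetilde{T}$, so $\widetilde{T}' = \lim_n \hat{T}^{n,\prime} \le \widetilde{T}$. The main obstacle I expect lies in the (b)$\Rightarrow$(a) step: since the SMD $Y$ has no prescribed path regularity, one must rely on the fixed-time supermartingale inequality together with nonnegativity of $Y$ (which is natural in this context but should be spelled out), rather than on any pathwise supermartingale absorbing argument. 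Everything else reduces to a routine essential-supremum construction once the upward-directedness via convex combinations is in place.
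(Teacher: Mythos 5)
Your proposal is correct and takes essentially the same route as the paper. The paper also establishes the absorption/cemetery property $X = X\Ind_{\llbracket 0, \tau^X\rrbracket\llbracket}$ \Pas{} under (a) or (b) via the fixed-time supermartingale inequality, notes that $\mathcal{T}=\{\tau^X : X\in\mathcal{X}\}$ is closed under maxima because $\tau^{X^1}\vee\tau^{X^2}=\tau^{cc^{1/2}(X^1,X^2)}$, and then extracts a maximizing sequence for $\widetilde{T}$ via the scoring function $g(\tau)=E[\int_0^\tau e^{-t}\,dt]$ --- which is exactly the standard explicit implementation of the essential-supremum construction for upward-directed families that you invoke abstractly.
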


\begin{remark}\label{rmk 2}
	\textbf{1)} Assume that either (a) or (b) from Proposition~\ref{prop 1} hold and let $\hat{T}^n$ as in the proposition. Then $\mathcal{X}$ can be seen as a limit of $(\mathcal{X}^n)_{n\in\mathbb{N}}$ as $n \rightarrow \infty$ where $\mathcal{X}^n := \mathcal{X} \Ind_{\llbracket 0, \hat{T}^n \llbracket}$ due to $\hat{T}^n \nearrow \widetilde{T}$ and $X = X\Ind_{\llbracket 0, \widetilde{T} \llbracket}$ \Pas{} for all $X \in \mathcal{X}$. Moreover, for every $n\in\mathbb{N}$, $\mathcal{X}^n$ is an SPD, since $\hat{X}^n = \hat{X}^n\Ind_{\llbracket 0, \hat{T}^n \llbracket} \in \mathcal{X}^n$ is positive on $\llbracket 0, \hat{T}^n \llbracket$. Therefore, $\mathcal{X}$ can be seen as a limit of the SPDs $(\mathcal{X}^n)_{n\in\mathbb{N}}$.
	
	\textbf{2)} Due to Proposition~\ref{prop 1} and its proof, we may and do assume without loss of generality by switching between versions that if Proposition~\ref{prop 1} (a) or (b) holds, then for all $X\in \mathcal{X}$, we have $X = X\Ind_{\llbracket 0, \tau^X \wedge \widetilde{T} \llbracket}$ (surely), where $\tau^X := \inf \{ t \geq 0: X_t =0\}$ and $\widetilde{T}$ is defined as in Proposition~\ref{prop 1}. It follows in particular that if (b) holds, we may assume that (a) holds as well. Moreover, under either (a) or (b), the following conditions are equivalent for any process $W$ (not necessarily in $\mathcal{X}$):
	\begin{enumerate}[(1)]
		\item $\{W=0\} \subseteq \{X= 0\}$ for each $X \in \mathcal{X}$.
		\item $W$ is strictly positive on $\llbracket 0, \widetilde{T} \llbracket$.
	\end{enumerate}
	However, perhaps surprisingly, NUPBR$_{\rm loc}$ (and hence DSV) does not imply that $0$ is an absorbing state. In fact, it only implies this if $\widetilde{T}\equiv\infty$; but it might happen that if $\widetilde{T} \not \equiv \infty$, some $X \in \mathcal{X}$ becomes positive after $\widetilde{T}$. For example, if $\mathcal{X}$ is an SPP which satisfies NUPBR$_{\rm loc}$, then \mbox{$\mathcal{X}^{?} : = \{X \Ind_{\llbracket 0,1 \llbracket \cup \llbracket 2,\infty \rrbracket} : X \in \mathcal{X}\}$} is an SPD which satisfies NUPBR$_{\rm loc}$ as well, but $0$ is not an absorbing state for $\mathcal{X}^{?}$.
	
	\textbf{3)} Observe that $\widetilde{T}$ as in Proposition~\ref{prop 1} is a stopping time. Indeed, each $\hat{T}^n$ is a stopping time, and hence so is their monotone limit $\widetilde{T}$. Therefore, $\widetilde{T}$ can only be independent of the filtration $\mathbb{F}$ if it is a constant.
\end{remark}

If $\mathcal{X}$ is an SPD with $\hat{X}$ as in \eqref{B} and $\hat{T}$ as in Definition~\ref{def 1}, then $\widetilde{T}$ as in Proposition~\ref{prop 1} is equal to $\hat{T}$. Therefore, $\widetilde{T}$ can be considered as a generalisation of $\hat{T}$ to the case where $\mathcal{X}$ is an SP, but no SPD. In general, $\widetilde{T}$ plays a similar role for SPs as $\hat{T}$ for SPDs. 

Our second main result is a dual characterization of NUPBR$_{\rm loc}$ in terms of a supermartingale deflator. This result applies to the most general case when $\mathcal{X}$ is an SP.

\begin{theorem}\label{thm 2}
	Let $\mathcal{X}$ be an SP. Then the following are equivalent:
	\begin{enumerate}[$\rm (1)$]
		\item $\mathcal{X}$ satisfies NUPBR$_{\rm loc}$ and $0$ is an absorbing state.
		\item There exists an SMD $Y$ which is adapted and such that $\{Y=0\} \subseteq \{X= 0\}$ for each $X \in \mathcal{X}$.
	\end{enumerate}
\end{theorem}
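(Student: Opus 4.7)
My plan is to verify each property directly from $Y$. For any $X \in \mathcal{X}$, the nonnegative supermartingale $XY$ starts from $X_0 Y_0 = Y_0$, so $E[X_t Y_t] \leq Y_0$. Combined with $\{Y_t = 0\} \subseteq \{X_t = 0\}$, the inclusion
\[
\{X_t > M\} \subseteq \{0 < Y_t < \delta\} \cup \{X_t Y_t \geq M\delta\},
\]
valid for all $M,\delta > 0$, together with Markov's inequality (first sending $M\to\infty$, then $\delta \to 0$) will give $L^0$-boundedness of $\mathcal{X}_t$ for every $t$, hence NUPBR$_{\rm loc}$. For absorption, $X_s = 0$ forces $X_s Y_s = 0$, so $X_t Y_t = 0$ \Pas{} for $t \geq s$ by the nonnegative supermartingale property, and then $\{Y_t = 0\} \subseteq \{X_t = 0\}$ yields $X_t = 0$ \Pas

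\textbf{Direction $(1)\Rightarrow(2)$: setup.} The strategy is to reduce to the SPD case and apply Theorem~\ref{thm 1}. First, I would invoke Proposition~\ref{prop 1} to obtain a stopping time $\widetilde{T}$ and processes $\hat{X}^n \in \mathcal{X}$ whose cemetery times $\hat{T}^n$ increase to $\widetilde{T}$, with $X = X\Ind_{\llbracket 0, \widetilde{T} \llbracket}$ \Pas{} for every $X$. For each $n$, I would build an auxiliary SPD $\mathcal{X}^n$ containing $\hat{X}^n$ as a dominating element by modifying each $X \in \mathcal{X}$ on $\llbracket \hat{T}^n, \infty \rrbracket$ to its left $\liminf$ at $\hat{T}^n$ and closing under convex combinations and switching (this is the auxiliary construction described in the introduction). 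Using the stopping times $\sigma^n_k := \inf\{t : \hat{X}^n_t \leq 1/k\} \wedge k$, on which $\hat{X}^n \geq 1/k$, NUPBR$_{\rm loc}$ for $\mathcal{X}$ localised via the bounded-horizon SPPs from Lemma~\ref{lemma 2} should upgrade to DSV for $\hat{X}^n$ on $\mathcal{X}^n$. Theorem~\ref{thm 1} then provides, for each $n$, an adapted \Pas{} RC SMD $Y^n$ for $\mathcal{X}^n$, strictly positive on $\llbracket 0, \hat{T}^n \llbracket$ and satisfying $\liminf_{t \upuparrows \hat{T}^n} \hat{X}^n_t Y^n_t > 0$ \Pas

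\textbf{Direction $(1)\Rightarrow(2)$: pasting.} Next, I would combine the $Y^n$ into a single process $Y$ on $\llbracket 0, \widetilde{T} \llbracket$ by multiplicative pasting: on $\llbracket \hat{T}^{n-1}, \hat{T}^n \llbracket$, rescale $Y^n$ by an $\mathcal{F}_{\hat{T}^{n-1}}$-measurable factor chosen so the pieces join continuously at each $\hat{T}^{n-1}$. The asymptotic positivity $\liminf_{t \upuparrows \hat{T}^{n-1}} \hat{X}^{n-1}_t Y^{n-1}_t > 0$ makes these factors \Pas{} finite and positive. Setting $Y = 0$ on $\llbracket \widetilde{T}, \infty \rrbracket$ produces a process strictly positive on $\llbracket 0, \widetilde{T} \llbracket$, hence $\{Y = 0\} \subseteq \llbracket \widetilde{T}, \infty \rrbracket \subseteq \{X = 0\}$ for every $X \in \mathcal{X}$. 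That $XY$ is a supermartingale for each $X$ should follow from the supermartingale property of $XY^n$ on $\llbracket 0, \hat{T}^n \llbracket$ together with a standard pasting lemma across the stopping times $\hat{T}^n$.

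\textbf{Main obstacle.} The hard part will be ensuring that the pasted $Y$ is $\mathbb{F}$-adapted. The rescaling factors and the $\liminf$ operations across countably many stopping times are a priori only measurable in a suitably enlarged filtration $\tilde{\mathbb{F}} \supseteq \mathbb{F}$. I would carry out the pasting in $\tilde{\mathbb{F}}$, producing an $\tilde{\mathbb{F}}$-adapted SMD $\tilde{Y}$, and then define $Y$ as the optional $\mathbb{F}$-projection of $\tilde{Y}$. Since optional projection preserves the supermartingale property when multiplied by $\mathbb{F}$-adapted processes, $XY$ will remain an $\mathbb{F}$-supermartingale for every $X \in \mathcal{X}$. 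The most delicate verification will be that $\{Y = 0\} \subseteq \{X = 0\}$ survives projection; this should follow by choosing $\tilde{Y}$ sufficiently large on the positive set of each $X$ in the enlarged space, so that its $\mathbb{F}$-projection remains positive wherever some $X \in \mathcal{X}$ is.
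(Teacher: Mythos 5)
Your argument here is correct and essentially the same as the paper's: $L^0$-boundedness of $\mathcal{X}_t$ follows from the uniform bound $E[X_tY_t] \le 1$ together with $\{Y_t=0\}\subseteq\{X_t=0\}$, and absorption follows from the nonnegative-supermartingale property of $XY$ plus the same inclusion. (The paper delegates the absorption argument to Proposition~\ref{prop 1}(b)/Remark~\ref{rmk 2}, but the content is the same.)

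\textbf{Direction $(1)\Rightarrow(2)$: fatal gap.} Your plan is to build, for each $n$, an auxiliary SPD $\mathcal{X}^n$ by replacing each $X$ on $\llbracket \hat{T}^n,\infty\rrbracket$ by its left $\liminf$ at $\hat{T}^n$, to argue that $\mathcal{X}^n$ satisfies DSV for $\hat{X}^n$, and then to invoke Theorem~\ref{thm 1}. This cannot work, for two closely related reasons. First, the modification by the left $\liminf$ is not even well defined: without DSV (or the existence of an asymptotically positive SMD), $\liminf_{t\upuparrows\hat{T}^n} X_t/\hat{X}^n_t$ can be $+\infty$ with positive probability, and this is exactly what Claim~1 in the proof of Theorem~\ref{thm 1} rules out only \emph{under} DSV. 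Second, and more fundamentally, NUPBR$_{\rm loc}$ simply does not imply DSV for the dominating process: Example~\ref{ex 3} exhibits an SPD satisfying NUPBR$_{\rm loc}$ for which $\liminf_{t\upuparrows\hat{T}}\tilde{X}_t/\hat{X}_t=\infty$, so DSV for $\hat{X}$ fails. Your stopping times $\sigma^n_k$ only control $X$ at times where $\hat{X}^n\ge 1/k$; they give $L^0$-boundedness of $\mathcal{X}_{\sigma^n_k}$ but say nothing about the $\liminf$ \emph{at} $\hat{T}^n$. Consequently Theorem~\ref{thm 1} cannot be applied to $\mathcal{X}^n$, and the subsequent steps (the multiplicative rescaling factors being finite, the pasting) lose their foundation.

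The paper's route is subtly but essentially different: it replaces $\hat{T}^n$ by a \emph{strictly smaller}, discretised random time $\tau^n<\hat{T}^n$ taking finitely many values, so that $\hat{X}^n$ stays strictly positive on the \emph{closed} interval $\llbracket 0,\tau^n\rrbracket$. The stopped set $\mathcal{Z}^{+,n}$ is then an SPP (not merely an SPD), and since $\tau^n$ takes finitely many values, Lemma~\ref{lemma 3} upgrades NUPBR$_{\rm loc}$ to $L^0$-boundedness of $\mathcal{Z}^{+,n}_\infty$. One can then apply Kardaras' result (Proposition~\ref{thm kar}) directly, with no need for DSV or for Theorem~\ref{thm 1}. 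The price is that $\tau^n$ is not an $\mathbb{F}$-stopping time, which forces the filtration enlargement (Lemma~\ref{lemma 8}) and the optional-projection step that you correctly anticipated. To repair your proof you would need to adopt this localisation at strictly earlier discrete times rather than at $\hat{T}^n$ itself, and argue via Kardaras' SPP theorem rather than Theorem~\ref{thm 1}.
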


An immediate consequence of the proof of Theorem~\ref{thm 2} is that NUPBR$_{\rm loc}$ holds only if there exists an SMD $Y$ which is adapted and strictly positive on $\llbracket 0, \widetilde{T} \llbracket$. This is an analogue of Theorem~\ref{thm 1}, \lq\lq (1) $\Rightarrow$ (2)\rq\rq. The converse of this statement does not hold, i.e., the existence of an SMD $Y$ which is adapted and strictly positive on $\llbracket 0, \widetilde{T} \llbracket$ does not imply NUPBR$_{\rm loc}$. This is because the existence of such a $Y$ does not provide any condition which controls $\mathcal{X}$ on the interval $\llbracket \widetilde{T}, \infty \llbracket$; more precisely, if neither Proposition~\ref{prop 1} (a) nor (b) hold, some $X \in \mathcal{X}$ might become positive on $\llbracket \widetilde{T}, \infty \llbracket$ again and violate NUPBR$_{\rm loc}$ (e.g. in a setup like the curious example in Remark~\ref{rmk 2}). On the other hand, we have seen in Remark~\ref{rmk 2} that Theorem~\ref{thm 2}, (2) implies that $0$ is an absorbing state, but NUPBR$_{\rm loc}$ does not. Therefore NUPBR$_{\rm loc}$ without any further assumption does not imply Theorem~\ref{thm 2}, (2). Finally, Theorem~\ref{thm 2}, (2) holds if and only if there exists an SMD $Y'$ which is adapted and strictly positive; indeed, the \lq\lq if\rq\rq{} direction is trivial and the \lq\lq only if\rq\rq{} direction follows by setting $Y' := Y + \Ind_{\{Y=0\}}$ and observing that $Y'$ is still an adapted SMD due to $\{Y=0\} \subseteq \{X= 0\}$ for each $X \in \mathcal{X}$. We summarise these observations in the following corollary.

\begin{cor}\label{cor 1}
	Let $\mathcal{X}$ be an SP and let $\widetilde{T}$ be defined as in Proposition~\ref{prop 1}. Consider the following statements:
	\begin{enumerate}[$\rm (1)$]
		\item There exists an SMD $Y$ which is adapted and strictly positive.
		\item There exists an SMD $Y$ which is adapted and such that $\{Y=0\} \subseteq \{X= 0\}$ for each $X \in \mathcal{X}$.
		\item $\mathcal{X}$ satisfies NUPBR$_{\rm loc}$.
		\item There exists an SMD $Y$ which is adapted and strictly positive on $\llbracket 0, \widetilde{T} \llbracket$.
	\end{enumerate}	
	Then $(1) \Leftrightarrow (2) \Rightarrow (3) \Rightarrow (4)$ and the converses are not true in general. If $0$ is an absorbing state, all four statements are equivalent.
\end{cor}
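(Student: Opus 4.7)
The plan is to chain the four statements by combining short direct arguments for the easy links with a pointer into the proof of Theorem~\ref{thm 2} for the only technically loaded step $(3) \Rightarrow (4)$. That step is the main obstacle, because under NUPBR$_{\rm loc}$ alone one must still justify that a stopping time $\widetilde{T}$ with the property $X = X\Ind_{\llbracket 0, \widetilde{T} \llbracket}$ is even available, which requires actually inspecting the localisation/pasting construction hidden inside the proof of Theorem~\ref{thm 2} rather than just quoting its statement. Everything else will be routine once $(3) \Rightarrow (4)$ is granted.

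For $(1) \Leftrightarrow (2)$ the forward direction is trivial, and for $(2) \Rightarrow (1)$ I follow the recipe stated in the paragraph preceding the corollary: set $Y' := Y + \Ind_{\{Y = 0\}}$. This process is adapted since $\{Y_t = 0\} \in \mathcal{F}_t$ for every $t$, strictly positive by construction, and still an SMD because on $\{Y = 0\}\subseteq\{X=0\}$ we have $X \Ind_{\{Y = 0\}} = 0$, so $XY' = XY + X\Ind_{\{Y=0\}} = XY$ pointwise, and $XY$ is a supermartingale by hypothesis. For $(2) \Rightarrow (3)$, the just-proved equivalence lets me assume $Y > 0$. Since $X_0 = 1$ and $\mathcal{F}_0$ is trivial, $X_0 Y_0 = Y_0$ is a deterministic constant; the supermartingale property then gives $E[X_t Y_t] \leq Y_0$ uniformly in $X \in \mathcal{X}$, so $\{X_t Y_t : X \in \mathcal{X}\}$ is $L^1$-bounded and hence $L^0$-bounded. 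Strict positivity of the single random variable $Y_t$ converts this into $L^0$-boundedness of $\mathcal{X}_t$ via
\[
P(X_t > C) \leq P(X_t Y_t > C\delta) + P(Y_t < \delta),
\]
picking $\delta$ small and then $C$ large; doing this for every $t$ gives NUPBR$_{\rm loc}$.

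For $(3) \Rightarrow (4)$ I invoke the observation made right after Theorem~\ref{thm 2}: the proof of that theorem in fact uses only NUPBR$_{\rm loc}$ to produce an adapted SMD $Y$ which is strictly positive on $\llbracket 0, \widetilde{T} \llbracket$ with $\widetilde{T}$ provided by Proposition~\ref{prop 1} (``$0$ absorbing'' is then used afterwards only to pass from strict positivity on $\llbracket 0, \widetilde{T} \llbracket$ to the sharper inclusion $\{Y=0\}\subseteq\{X=0\}$ on all of $[0,\infty)$ via $X = X\Ind_{\llbracket 0, \widetilde{T} \llbracket}$). This simultaneously gives (4) and, once ``$0$ absorbing'' is added, the missing arrow $(3)\Rightarrow(2)$ coming straight from Theorem~\ref{thm 2}, closing the four statements into a full equivalence. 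To see that the converses fail in general I reuse the curious construction of Remark~\ref{rmk 2}: starting from an SPP $\mathcal{X}$ satisfying NUPBR$_{\rm loc}$, the set $\mathcal{X}^{?} := \{X\Ind_{\llbracket 0, 1\llbracket \cup \llbracket 2,\infty\rrbracket} : X \in \mathcal{X}\}$ still satisfies NUPBR$_{\rm loc}$, but $0$ is not absorbing; by Proposition~\ref{prop 1} its (2) must fail, giving $(3)\not\Rightarrow(2)$. An analogous modification, where some $X$ is allowed to become strictly positive again on $\llbracket \widetilde{T}, \infty\llbracket$ without disturbing the deflator constructed on $\llbracket 0, \widetilde{T} \llbracket$, violates NUPBR$_{\rm loc}$ while keeping (4), yielding $(4)\not\Rightarrow(3)$.
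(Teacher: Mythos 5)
Your approach matches the paper's own sketch for this corollary, and most of the individual steps are sound: the substitution $Y' := Y + \Ind_{\{Y=0\}}$ for $(2)\Rightarrow(1)$, the elementary $L^0$-boundedness estimate for $(2)\Rightarrow(3)$, and the extraction of $(3)\Rightarrow(4)$ from the construction inside the proof of Theorem~\ref{thm 2} (with the absorbing-state assumption only entering afterwards to upgrade strict positivity on $\llbracket 0, \widetilde{T} \llbracket$ to the inclusion $\{Y=0\}\subseteq\{X=0\}$) are exactly the ingredients the paper uses. The counterexample pointers for the failed converses also track the paper's Remark~\ref{rmk 2} discussion.

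There is, however, a genuine gap in the final claim. You say that once $0$ absorbing is granted, the single extra arrow $(3)\Rightarrow(2)$ (from Theorem~\ref{thm 2}) ``closes the four statements into a full equivalence.'' It does not: that arrow gives the cycle $(1)\Leftrightarrow(2)\Leftrightarrow(3)$, but $(4)$ remains a sink, since the only arrow touching it is $(3)\Rightarrow(4)$. You still need an arrow out of $(4)$; the natural one is $(4)\Rightarrow(2)$ under $0$ absorbing. This is easy but must be said: if $0$ is absorbing, Proposition~\ref{prop 1} together with Remark~\ref{rmk 2} gives $X = X\Ind_{\llbracket 0, \widetilde{T}\llbracket}$ for all $X\in\mathcal{X}$, so strict positivity of $Y$ on $\llbracket 0, \widetilde{T}\llbracket$ yields $\{Y=0\}\subseteq\llbracket\widetilde{T},\infty\llbracket\subseteq\{X=0\}$, which is exactly $(2)$. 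A secondary imprecision: you describe $\widetilde{T}$ as ``provided by Proposition~\ref{prop 1}'' under NUPBR$_{\rm loc}$ alone, but Proposition~\ref{prop 1} as stated requires its hypotheses (a) or (b). What is actually available under NUPBR$_{\rm loc}$ alone is the \emph{construction} inside the proof of Proposition~\ref{prop 1} (the times $\hat{T}^n$ and their limit $\widetilde{T}$, which are always defined), not its conclusion $X = X\Ind_{\llbracket 0, \widetilde{T}\llbracket}$; $(4)$ only needs the former, and that is why the argument goes through.
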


\begin{remark}
	If $\mathcal{X}$ is as in Example~\ref{ex 2} 1), then Theorem~\ref{thm 2} via Corollary~\ref{cor 1} simplifies to the following: $\mathcal{X}$ satisfies NUPBR$_{\rm loc}$ if and only if there exists an SMD $Y$ which is adapted and strictly positive. This can be seen as the localised version of Karatzas/Kardaras \cite[Theorem~4.12]{KK07}. For a similar dual characterisation, see also Chau et al. \mbox{\cite[Proposition~2.1]{CCFM17}}.
\end{remark}


\subsection{Comparison to the literature}

The literature on sets of processes with no strictly positive element is rather small. Apart from the following two papers in mathematical finance, we are not aware of other references or related works.

Tehranchi \cite{Teh15} undertakes a study of absence of arbitrage in classic market models in infinite discrete time, i.e., $\mathcal{X}$ consists of stochastic integrals against a fixed semimartingale integrator. He proves (see \cite[Theorem~2.10]{Teh15}) that the absence of so-called investment-consumption arbitrage, which is a scalable type of arbitrage closely related to NUPBR, is equivalent to the existence of a strictly positive process $Y$ such that $XY$ is a martingale for each $X \in \mathcal{X}$.

A recent preprint by Harms et al. \cite{HLN20}, based on research independent of ours, proves some similar results for a generalised set $\mathcal{X}$ of processes like in this paper. In their first two main results \cite[Theorems~2.13 and 2.17]{HLN20}, they prove a dual characterisation of NUPBR$_{\rm loc}$ similar to our second main result in Theorem~\ref{thm 2}. The main difference is that they need two additional assumptions, namely, that there exists an $\hat{X} \in \mathcal{X}$ which is strictly positive on $\llbracket 0 , \hat{T} \llbracket$ (i.e., $\mathcal{X}$ is an SPP) and that $\hat{T}$ takes only countably many values. For comparison, note that we also need the former assumption in our dual characterisation of DSV in Theorem~\ref{thm 1} but not the latter, and we do not need any of them for NUPBR$_{\rm loc}$ in Theorem~\ref{thm 2}. Furthermore, \cite{HLN20} is limited to a finite time horizon; on the other hand, it does not require that the processes $X \in \mathcal{X}$ are adapted.

Concerning the third main result of Harms et al. \cite[Theorem~2.23]{HLN20}\footnote{In the cited version, there are two unclear points in the statement which we assume to be typos: We believe that it should be \lq\lq$P[\tilde{\tau} \in (s, t]] > 0$\rq\rq{} instead of \lq\lq$P[ \thinspace \cdot \thinspace |\tilde{\tau} \in (s, t]] > 0$\rq\rq, and \lq\lq$\tilde{\tau}: \Omega \rightarrow (0,T] \cup \{\infty\}$\rq\rq{} instead of \lq\lq$\tilde{\tau}: \Omega \rightarrow [0,T] \cup \{\infty\}$\rq\rq.}, we found that the technical assumptions used in their statement simplify to the case when $\hat{T} = c$ \Pas{} for a \mbox{$c\in(0,\infty)$}. Indeed, we show in Proposition~\ref{char t} combined with Remark~\ref{rmk 2}, 3) that the random time $\tilde{\tau}$ defined in \cite[Theorem~2.23]{HLN20} is a stopping time, and hence independent of the filtration only if it is \Pas{} constant. This means that \cite[Theorem~2.23]{HLN20} almost reduces to a special case of \cite[Theorem~2.17]{HLN20}; to be precise, the only difference to the special case of \mbox{\cite[Theorem~2.17]{HLN20}} when $\hat{T} = c$ \Pas{} is that \cite[Theorem~2.23]{HLN20} does not require the existence of an $\hat{X} \in \mathcal{X}$ which is strictly positive on $\llbracket 0 , \hat{T} = c \llbracket$, but only that for every $s<c$ there is an $\hat{X}^{(s)} \in \mathcal{X}$ (depending on $s$) which is strictly positive on $[0, s]$. Note that the latter assumption is always satisfied, as we have seen in Proposition~\ref{prop 1}.


\section{Proofs}
\label{sec prof}

\subsection{Proof of Theorem~\ref{thm 1}}

Before starting with the proof of Theorem~\ref{thm 1}, we briefly outline the main ideas and techniques used for \lq\lq (1)~$\Rightarrow$~(2)\rq\rq, which is the non-standard direction. We start by introducing an auxiliary set $\mathcal{Z}$ of processes which serves as a link between $\mathcal{X}$ and the framework of Kardaras \cite{Kar13d}. We obtain $\mathcal{Z}$ from $\mathcal{X}$ by first discounting by $\hat{X}$, then setting each process equal to its left $\liminf$ at $\hat{T}$ on the interval $\llbracket \hat{T}, \infty \rrbracket$, and finally taking the convex closure and the closure with respect to the switching property. As to the properties of $\mathcal{Z}$, we first argue (Claims 1, 2) that $\mathcal{Z}$ is an SPP (hence satisfies the conditions of \mbox{\cite[Theorem~2.3]{Kar13d})}; then we prove (Claims 3-5) that $\mathcal{Z}_\infty$ is bounded in $L^0$ whenever $\mathcal{X}$ satisfies DSV for $\hat{X}$. The rest of the proof consists of applying \cite[Theorem~2.3]{Kar13d} to $\mathcal{Z}$, modifying the so obtained deflator to be consistent with $\mathcal{X}$, and then carefully checking the required properties.

\begin{proof}[Proof of Theorem~\ref{thm 1}]	
	Consider the set
	\begin{equation}\label{eq z}
		\mathcal{Z}^0 : = \left\{ Z := \frac{X}{\hat{X}} \Ind_{\llbracket 0, \hat{T} \llbracket} + \liminf_{s \upuparrows \hat{T}} \frac{ X_s}{\hat{X}_s} \Ind_{\llbracket \hat{T}, \infty \rrbracket}: X \in \mathcal{X} \right\}
	\end{equation}
	and extend each $Z_\cdot (\omega)$ from $[0,\infty)$ to $[0,\infty]$ by keeping it constant on $[\hat{T}(\omega), \infty]$. Then according to Remark~\ref{rmk 3}, $\mathcal{Z}^0$ is a set of processes with bounded time horizon (since the time horizon is the right-closed interval $[0,\infty]$). In particular, the conditions \eqref{a}--\eqref{d} and \eqref{B} are to be understood on the right-closed interval $[0,\infty]$.	
	
	\textit{Claim 1: $\mathcal{Z}^0$ satisfies \eqref{a} if either $(1)$ or $(2)$ holds.} Note that it is sufficient to show that for all $X\in \mathcal{X}$, $\liminf_{s \upuparrows \hat{T}} { X_s}/{\hat{X}_s} < \infty$ \Pas{} Indeed, from this and $\hat{X}>0$ on $\llbracket 0, \hat{T} \llbracket$ follows that each $Z\in\mathcal{Z}^0$ is a real-valued process, which is adapted, RC and nonnegative by definition and satisfies $Z_0 = X_0/\hat{X}_0 = 1$. If $(1)$ holds, then \mbox{$\{ \liminf_{s \upuparrows \hat{T}} { X_s}/{\hat{X}_s} : X \in \mathcal{X} \}$} is bounded in $L^0$ and $\liminf_{s \upuparrows \hat{T}} { X_s}/{\hat{X}_s}  < \infty$ \Pas{} follows immediately for each $X \in \mathcal{X}$. Suppose now that $(2)$ holds and let $Y$ be a supermartingale deflator as in $(2)$. Fix $X \in \mathcal{X}$; then $XY$ is a supermartingale which is \Pas{} RC since both $X$ and $Y$ are. Define \mbox{$A:=\{ \liminf_{s \upuparrows \hat{T}} X_sY_s = \infty\} \in\mathcal{F}$}. Then for each $n \in \mathbb{N}$, the stopping time $T^n:= \inf\{t\geq 0: X_tY_t \geq n \}$ satisfies $X_{T^n}Y_{T^n}\geq n$ on $A$ by right-continuity of $XY$ and hence $E[X_{T^n}Y_{T^n}]\geq n P[A]$ since $XY$ is nonnegative. On the other hand, $XY$ is right-closable at $\infty$ as a nonnegative supermartingale, and hence the optional stopping theorem applied on $[0,\infty]$ yields for each $n\in \mathbb{N}$ that $E[X_{T^n}Y_{T^n}]\leq X_0 Y_0 = 1$. It follows that $P[A] = 0$ and hence $\liminf_{s \upuparrows \hat{T}}  X_s Y_s  < \infty$ \Pas{} Now since $X$, $Y$ and $\hat{X}$ are all nonnegative and $\liminf_{s \upuparrows \hat{T}} \hat{X}_s Y_s > 0$ \Pas, the contraposition of Lemma~\ref{lemma liminf} (b) with $x(s) := X_s (\omega) /\hat{X}_s(\omega)$ and $y(s) := \hat{X}_s(\omega) Y_s(\omega)$ (for any fixed $\omega \in \Omega$) implies that $\liminf_{s \upuparrows \hat{T}} X_s /\hat{X}_s < \infty$ \Pas{} This proves Claim~1.
	
	For $\alpha \in [0,1]$ and any two processes $Z^1, Z^2$, recall from \eqref{c} the convex combination operator \mbox{$cc^\alpha (Z^1,Z^2) := (1-\alpha) Z^1 + \alpha Z^2$}. Also, for $t \in [0,\infty]$, \mbox{$A \in  \mathcal{F}_t$} and any two processes $Z^1, Z^2$, recall from \eqref{d} the switching operator \mbox{$sw^{t, A}(Z^1,Z^2) := \Ind_{A^c} Z^1_\cdot + \Ind_{A} ({Z^1_{t \wedge \cdot}}/{Z^2_t}) Z^2_{t \vee \cdot}$} if $Z^2$ is a strictly positive process and \mbox{$sw^{t, A}(Z^1,Z^2) := Z^1$} otherwise. Note that by Claim 1, both operators produce real-valued processes when applied to $Z^1, Z^2 \in \mathcal{Z}^0$. Let
	\begin{align*}
		\mathcal{Z}^n 
		:= & \left\{cc^\alpha (Z^1, Z^2) : Z^1, Z^2 \in \mathcal{Z}^{n-1}, \alpha \in[0,1] \right\} 
		\\
		& \cup \left\{sw^{t, A} (Z^1, Z^2) : Z^1, Z^2 \in \mathcal{Z}^{n-1} , t \in [0,\infty], A \in \mathcal{F}_t \right\} \mspace{20mu} \text{for $n \in \mathbb{N}$}
	\end{align*}
	and define the set $\mathcal{Z} := \bigcup_{n \in \mathbb{N}} \mathcal{Z}^n$. 
	
	\textit{Claim 2: $\mathcal{Z}$ is an SPP if either $(1)$ or $(2)$ hold.} Observe that $\mathcal{Z}$ satisfies \eqref{c} and \eqref{d} by construction, and since $\hat{X} \in \mathcal{X}$, $1$ is an element of $\mathcal{Z}^0 \subseteq \mathcal{Z}$, and therefore $\mathcal{Z}$ also satisfies \eqref{b}. Suppose now that either $(1)$ or $(2)$ hold. By induction over $(\mathcal{Z}^n)_{n\in\mathbb{N}}$ and due to Claim  1, it is straightforward to check that $\mathcal{Z}$ satisfies \eqref{a} as well.
	
	\textit{Claim 3: For all $Z \in \mathcal{Z}$,}
	\begin{equation}\label{eq 3}
		Z_s(\omega) = Z_{\hat{T}}(\omega), \mspace{20mu} \forall (\omega, s) \in  \llbracket \hat{T}, \infty \rrbracket.
	\end{equation}
	To prove Claim 3, we use induction over $(\mathcal{Z}^n)_{n\in\mathbb{N}}$. For $n=0$, the claim holds by definition. For the induction step, let $Z \in \mathcal{Z}^n$. Then either $Z = cc^\alpha (Z^1, Z^2)$ or $Z = sw^{t, A}(Z^1, Z^2)$ for some $Z^1, Z^2 \in \mathcal{Z}^{n-1}$ and suitable $\alpha$ respectively $t, A$. In the first case, it follows immediately by the induction hypothesis that $Z$ satisfies \eqref{eq 3}. In the second case, due to the definition of $sw^{t,A}$, we may assume without loss of generality that $Z^2$ is strictly positive and hence \mbox{$Z = \Ind_{A^c} Z^1 _\cdot + \Ind_{A} ({Z^1_{t \wedge \cdot}}/{Z^2_t}) Z^2_{t \vee \cdot}$}. So let $(\omega, s)$ be in $\llbracket \hat{T}, \infty \rrbracket$; then $\hat{T}(\omega)\leq s$ and \mbox{$Z^i_s(\omega) = Z^i_{\hat{T}}(\omega)$} for $i=1,2$ by the induction hypothesis. On $\{t < \hat{T}\}$, we have 
	$$
	Z_s 
	= \Ind_{A^c} Z^1 _{s} + \Ind_{A} ({Z^1_{t \wedge s}}/{Z^2_t}) Z^2_{t \vee \hat{T}} 
	= \Ind_{A^c} Z^1 _{\hat{T}} + \Ind_{A} ({Z^1_{t}}/{Z^2_t}) Z^2_{\hat{T}} 
	= Z_{\hat{T}}.
	$$
	On $\{\hat{T}\leq t \leq s\}$, we have $Z^i_s=Z^i_t = Z^i_{\hat{T}}$ for $i=1,2$ by the induction hypothesis  and hence 
	$$
	Z_s 
	= \Ind_{A^c} Z^1 _{s} + \Ind_{A} ({Z^1_{t \wedge s}}/{Z^2_t}) Z^2_{t \vee \hat{T}}
	= \Ind_{A^c} Z^1_{\hat{T}} + \Ind_{A} ({Z^1_{t}}/{Z^2_{t}}) Z^2_{\hat{T}} 
	= Z_{\hat{T}},
	$$
	whereas on $\{\hat{T}\leq s <t\}$, we simply have $Z_s = Z^1_s = Z^1_{\hat{T}} = Z_{\hat{T}}$. This proves Claim 3.
	
	\textit{Claim 4: For all $Z \in \mathcal{Z}$, there exists an $X \in \mathcal{X}$ such that the following hold:}
	\vspace{-0.06cm}
	\begin{align}\label{eq 4}
		&\frac{X_s(\omega)}{\hat{X}_s(\omega)} = Z_s(\omega), \mspace{20mu} \forall (\omega, s) \in  \llbracket 0, \hat{T}\llbracket.
		\\
		\label{eq 7} &\liminf_{s \upuparrows \hat{T}} \frac{X_s}{\hat{X}_s} \geq Z_{\hat{T}} \mspace{20mu} \Pas 
	\end{align}
	\vspace{-0.06cm}
	For the proof, we again use induction over $(\mathcal{Z}^n)_{n \in \mathbb{N}}$. For $n=0$, the claim holds by definition. For $n\geq1$, let $ Z \in \mathcal{Z}^n$. Then either $Z = cc^\alpha (Z^1, Z^2)$ or \mbox{$Z = sw^{t, A}(Z^1, Z^2)$} for some $Z^1, Z^2 \in \mathcal{Z}^{n-1}$ and suitable $\alpha$ respectively $t, A$. By the induction hypothesis, there exist $X^1, X^2 \in \mathcal{X}$ satisfying \eqref{eq 4} and \eqref{eq 7} with $Z^1, Z^2$ respectively. If $Z = cc^\alpha (Z^1, Z^2)$, define $X : = cc^\alpha(X^1, X^2)$ and note that $X \in \mathcal{X}$ by \eqref{c}. Then by the induction hypothesis, on $\llbracket 0, \hat{T}\llbracket$, 
	\vspace{-0.06cm}
	$$
	\frac{X_s}{\hat{X}_s}
	= (1 - \alpha) \frac{X^1_s}{\hat{X}_s} + \alpha \frac{X^2_s}{\hat{X}_s}
	= (1 - \alpha)Z^1_s + \alpha Z^2_s
	= Z_s.
	$$
	\vspace{-0.06cm}
	Moreover, by the superadditivity of $\liminf$, 
	\vspace{-0.06cm}
	$$
	\liminf_{s \upuparrows \hat{T}} \frac{X_s}{\hat{X}_s}
	\geq \liminf_{s \upuparrows \hat{T}} (1 - \alpha) \frac{X^1_s}{\hat{X}_s}+ \liminf_{s \upuparrows \hat{T}} \alpha \frac{X^2_s}{\hat{X}_s}
	\geq (1 - \alpha) Z^1_T + \alpha Z^2_T 
	= Z_T \mspace{20mu} \Pas
	$$
	\vspace{-0.06cm}
	If $Z = sw^{t, A}(Z^1, Z^2)$, we can assume without loss of generality that $Z^2$ is strictly positive, and hence so is $X^2$ on $\llbracket 0, \hat{T}\llbracket$. Then due to \eqref{d}, $X := sw^{t, A}(X^1, X^2_t)$ is well defined (recall that $0/0 = 1$) and an element of $\mathcal{X}$. By the induction hypothesis and noting the tautology $\hat{X}_s = (\hat{X}_{t \wedge s} \hat{X}_{t \vee s})/\hat{X}_t$, we have on $\llbracket 0, \hat{T}\llbracket$ that
	\vspace{-0.06cm}
	\begin{align*}
		\frac{X_s}{\hat{X}_s}
		= \frac{\Ind_{A^c} X^1_s + \Ind_{A} ({X^1_{t \wedge s}}/{X^2_t}) X^2_{t \vee s}}{\hat{X}_s}
		& = \Ind_{A^c} Z^1_s + \Ind_{A} \frac{X^1_{t \wedge s}}{\hat{X}_{t \wedge s}} \frac{\hat{X}_t}{X^2_t} \frac{X^2_{t \vee s}}{\hat{X}_{t \vee s}}
		\\
		& = \Ind_{A^c} Z^1_s + \Ind_{A} (Z^1_{t \wedge s}/{Z^2_t}) Z^2_{t \vee s}
	\end{align*}
	\vspace{-0.06cm}
	Moreover, using first the definition of $X$, second a distinction of cases into $A^c$, \mbox{$A \cap \{ \hat{T} \leq t\}$} and $A \cap \{ t < \hat{T}\}$, third that $(A \cap \{ t < \hat{T}\})^c = A^c \cup (A \cap \{ \hat{T} \leq t\})$, next the induction hypothesis and $X^1_t/X^2_t = Z^1_t /Z^2_t$ on $\{t <\hat{T}\}$ by $\eqref{eq 4}$, the third step again, and finally that $Z^i_t = Z^i_{\hat{T}}$ on $\{ \hat{T}\leq t \}$ for $i=1,2$ due to Claim 3, we obtain
	\vspace{-0.06cm}
	\begin{align*}
		\liminf_{s \upuparrows \hat{T}} \frac{X_s}{\hat{X}_s}
		& = \liminf_{s \upuparrows \hat{T}} \frac{\Ind_{A^c} X^1_s + \Ind_{A} \big(\frac{X^1_{t \wedge s}}{X^2_t}\big) X^2_{t \vee s}}{\hat{X}_s}
		\\
		& = \Ind_{A^c}\liminf_{s \upuparrows \hat{T}} \frac{ X^1_s}{\hat{X}_s} 
		+ \Ind_{A \cap \{\hat{T} \leq t\}}\liminf_{s \upuparrows \hat{T}} \frac{\big(\frac{X^1_{s}}{X^2_t}\big) X^2_{t }}{\hat{X}_s} 
		+ \Ind_{A \cap \{ t < \hat{T}\}} \liminf_{s \upuparrows \hat{T}} \frac{ \big(\frac{X^1_{t}}{X^2_t}\big) X^2_{s}}{\hat{X}_s}
		\\
		& = \Ind_{(A \cap \{ t < \hat{T}\})^c}\liminf_{s \upuparrows \hat{T}} \frac{ X^1_s}{\hat{X}_s} 
		+ \Ind_{A \cap \{ t < \hat{T}\}} \liminf_{s \upuparrows \hat{T}} \frac{ \big(\frac{X^1_{t}}{X^2_t}\big) X^2_{s}}{\hat{X}_s}
		\\
		& \geq \Ind_{(A \cap \{ t < \hat{T}\})^c} Z^1_{ \hat{T}} + \Ind_{A \cap \{ t < \hat{T}\}}({Z^1_{t}}/{Z^2_t})Z^2_{ \hat{T}}
		\\
		& = \Ind_{A^c} Z^1_{ \hat{T}} + \Ind_{A \cap \{ \hat{T} \leq t\}} (Z^1_{ \hat{T}}/Z^2_{ \hat{T}})Z^2_{ \hat{T}} + \Ind_{A \cap \{ t < \hat{T}\}}({Z^1_{t}}/{Z^2_t})Z^2_{ \hat{T}}
		= Z_{\hat{T}} \mspace{20mu} \Pas
	\end{align*}
	\vspace{-0.06cm}
	This proves Claim 4.
	
	\textit{Claim 5: $\mathcal{X}$ satisfies DSV for $\hat{X}$ if and only if $\mathcal{Z}_\infty$ is bounded in $L^0$.} The \lq\lq if\rq\rq{} direction follows from the observation that $\{\liminf_{s \upuparrows \hat{T}} {X_s}/{\hat{X}_s} : X \in \mathcal{X}\} \subseteq \mathcal{Z}_\infty$. The \lq\lq only if\rq\rq{} direction is a direct consequence of Claim 4, \eqref{eq 7} and Claim 3.
	
	\lq\lq (1) $\Rightarrow$ (2)\rq\rq: Suppose that $\mathcal{X}$ satisfies DSV for $\hat{X}$. Then $\mathcal{Z}$ is an SPP by Claim 2, and by Claim 5, $\mathcal{Z}_\infty$ is bounded in $L^0$. Thus we can apply \cite[Theorem~2.3 and Remark~2.4.4]{Kar13d} (cf. Proposition~\ref{thm kar}) to $\mathcal{Z}$ and on the closed time interval $[0,\infty]$ to obtain a process $Y'$ which is adapted, RCLL in probability, strictly positive on $[0,\infty]$ and such that for each $Z \in \mathcal{Z}$, $ZY'$ is a supermartingale. Since $1 \in \mathcal{Z}$, $Y'$ is itself a supermartingale and hence without loss of generality RCLL \Pas{} (cf. Remark~\ref{rmk 1}). Define $Y := ({Y'}/{\hat{X} })\Ind_{\llbracket 0, \hat{T} \llbracket}$. Then since $\hat{T}$ is a stopping time and $Y'$, $\hat{X}$ are adapted RC \Pas{} and strictly positive on $\llbracket 0, \hat{T} \llbracket$, so is $Y$. Moreover, since $Y'$ is positive on $[0,\infty]$, the minimum principle for supermartingales implies that $\liminf_{t \upuparrows \hat{T}} \hat{X}_t Y_t = \lim_{t \upuparrows \hat{T}}  Y_t' >0$ \Pas{} 
	
	It only remains to show that $Y$ is a supermartingale deflator, i.e.~that for any $X~\in~\mathcal{X}$, \mbox{$A \in \mathcal{F}_s$} and $0 \leq s < t < \infty$, we have $E[X_tY_t \Ind_A] \leq E[X_sY_s \Ind_A]$. On $\{ \hat{T}\leq s \}$, we have \mbox{$\hat{T}<\infty$}, hence $Y_t = Y_s = 0$ by definition, and so \mbox{$E[X_tY_t \Ind_{A \cap \{ \hat{T}\leq s \}}] \leq E[X_sY_s \Ind_{A \cap \{ \hat{T}\leq s \}}]$} is trivially satisfied. On $\{s <\hat{T}\}$, setting \mbox{$Z := (X/\hat{X}) \Ind_{\llbracket 0, \hat{T} \llbracket} + (\liminf_{s \upuparrows \hat{T}} { X_s}/{\hat{X}_s}) \Ind_{\llbracket \hat{T}, \infty \rrbracket}$} and extending $Z(\omega)$ from $[0,\infty)$ to $[0,\infty]$ by keeping it constant on $[\hat{T}(\omega), \infty]$ yields $Z \in \mathcal{Z}$ and we get 
	\begin{align*}
		E[X_tY_t \Ind_{A \cap \{s < \hat{T}\}}]
		& = E\left[0 \Ind_{A \cap \{s < \hat{T} \leq t\}} + \frac{X_t Y'_t}{\hat{X}_t } \Ind_{A \cap \{t < \hat{T}\}}\right]
		= E[Z_t Y'_t \Ind_{A \cap \{t < \hat{T}\}}] 
		\\
		& \leq E[Z_t Y'_t \Ind_{A \cap \{s < \hat{T}\}}] 
		\leq E[Z_s Y'_s \Ind_{A \cap \{s < \hat{T}\}}]
		= E[X_sY_s \Ind_{A \cap \{s < \hat{T}\}}],
	\end{align*}
	where the first equality is due to $X_t=Y_t=0$ on $\{\hat{T} \leq t\}$ and $Y_t = {Y'_t}/{\hat{X}_t }$ on $\{t<\hat{T}\}$, the second holds because $Z_t = X_t/\hat{X}_t$ on $\{t < \hat{T}\}$ by \eqref{eq 4}, the first inequality is due to $Z, Y' \geq 0$ and $s\leq t$, the second holds because $ZY'$ is a supermartingale and $A \cap \{s < \hat{T}\} \in \mathcal{F}_s$ since $\hat{T}$ is a stopping time, and the last equality again follows from the definitions of $Z$ and $Y$. Hence we conclude.
	
	\lq\lq (2) $\Rightarrow$ (1)\rq\rq: Suppose that $(2)$ holds and let $Y$ be an adapted RC supermartingale deflator which is strictly positive on $\llbracket 0, \hat{T} \llbracket$ and satisfies \mbox{$\liminf_{t \upuparrows \hat{T}} \hat{X}_t Y_t >0 $} \Pas{} From $\hat{X} \in \mathcal{X}$ follows that $\hat{X}Y$ is a (nonnegative) supermartingale and hence \mbox{$\liminf_{t \upuparrows \hat{T}} \hat{X}_t Y_t <\infty$} \Pas{} Therefore we obtain that \mbox{$\{\liminf_{t \upuparrows \hat{T}} {X_t}/{\hat{X}_t}: X \in \mathcal{X} \}$} is bounded in $L^0$ (i.e. DSV holds), if and only if 
	$$
	\{(\liminf_{t \upuparrows \hat{T}} {X_t}/{\hat{X}_t}) (\liminf_{t \upuparrows \hat{T}} \hat{X}_t Y_t) : X \in \mathcal{X} \}
	$$ 
	is bounded in $L^0$. Now since $X, \hat{X}$ and $Y$ are all nonnegative, Lemma~\ref{lemma liminf} (c) together with \mbox{$x(s) := X_s (\omega) /\hat{X}_s(\omega)$} and \mbox{$y(s) := \hat{X}_s(\omega) Y_s(\omega)$} (for any fixed $\omega \in \Omega$) implies that
	$$
	(\liminf_{t \upuparrows \hat{T}} {X_t}/{\hat{X}_t}) (\liminf_{t \upuparrows \hat{T}} \hat{X}_t Y_t) 
	\leq \liminf_{t \upuparrows \hat{T}} X_t Y_t,
	$$
	and so it suffices to show that $\left\{\liminf_{t \upuparrows \hat{T}} X_t Y_t: X \in \mathcal{X} \right\}$ is bounded in $L^0$. If this is not the case, there is a $\delta>0$ and for each $N\in \mathbb{N}$ some $X^N \in \mathcal{X}$ with \mbox{$P[\liminf_{t \upuparrows \hat{T}} X^N_t Y_t >N]>\delta$}. Fix $N\in\mathbb{N}$, define \mbox{$\tau^N := \inf \{ t\geq 0: X^N_t Y_t \geq N\} \wedge \hat{T}$} and note that $\tau^N$ is a stopping time. Then \mbox{$P[\tau^N < \hat{T}]>\delta$} and the right-continuity of $X^NY$ gives $X^N_{\tau^N}Y_{\tau^N}\geq N$ on $\{ \tau^N < \hat{T} \}$. Since $X^N$ and $Y$ are nonnegative, it follows that $E[X^N_{\tau^N}Y_{\tau^N}]>\delta N$. On the other hand, $X^NY$ is a nonnegative supermartingale, hence right-closable at $\infty$ and so the optional stopping theorem on $[0,\infty]$ yields $E[X^N_{\tau^N}Y_{\tau^N}] \leq E[X^N_0Y_0] = 1$. This is a contradiction for any $N>1/\delta$, and hence we conclude.
\end{proof}


\subsection{Proof of Proposition~\ref{prop 1} and Corollary~\ref{cor 2}}

In this section we give the proof of Proposition~\ref{prop 1}. Using Proposition~\ref{prop 1} we are then able to quickly prove Corollary~\ref{cor 2}.

\begin{proof}[Proof of Proposition~\ref{prop 1}]	
	Define $\tau^X := \inf \{ t\geq 0: X_t =0\}$. First we show that 
	\begin{equation}\label{eq 9}
		X = X\Ind_{\llbracket 0, \tau^X \llbracket} \mspace{20mu} \Pas, \mspace{20mu} \forall X \in \mathcal{X}.
	\end{equation}
	Under (a), this is trivial. If (b) holds, suppose to the contrary that there exist an \mbox{$X \in \mathcal{X}$} and \mbox{$t \in [0,\infty)$} such that $X_t \Ind_{\{ \tau^X \leq t\}}>0$ with positive probability. Since $Y$ is an SMD for $\mathcal{X}$, $E[X_t Y_t | \mathcal{F}_{t \wedge \tau^X}] \leq X_{t \wedge \tau^X} Y_{t \wedge \tau^X}$ \Pas{} and hence \mbox{$E[X_t Y_t \Ind_{\{ \tau^X \leq t\}} | \mathcal{F}_{t \wedge \tau^X}] \leq X_{t \wedge \tau^X} Y_{t \wedge \tau^X} \Ind_{\{ \tau^X \leq t\}}$} \Pas{} Since $X$ is \Pas{} RC, $X_{\tau^X} = 0$ \Pas{} and hence $\{\tau^X \leq t\} = \{t \wedge \tau^X = \tau^X \}$ implies \mbox{$X_{t \wedge \tau^X} Y_{t \wedge \tau^X}\Ind_{\{ \tau^X \leq t\}} = 0$} \Pas{} On the other hand, $X_tY_t \Ind_{\{ \tau^X \leq t\}} >0$ with positive probability due to \mbox{$\{Y=0\}\subseteq \{X=0\}$}, and hence $E[X_t Y_t \Ind_{\{\tau^X\leq t\}}| \mathcal{F}_{t \wedge \tau^X}]  > 0$ with positive probability, which is a contradiction. This proves~\eqref{eq 9}.
	
	Define now the set of cemetery times as $\mathcal{T} := \{  \tau^X: X \in \mathcal{X}\}$. Then each $\tau \in \mathcal{T}$ is a stopping time. Define furthermore the function $g : \mathcal{T} \rightarrow \mathbb{R}$ by \mbox{$g(\tau) := E[\int_0^\tau e^{-t} dt]$}. Then \mbox{$0 \leq M := \sup_{\tau  \in \mathcal{T}} g(\tau) \leq E[\int_0^\infty e^{-t} dt] = 1$}. Let $(\tau^n)_{n \in \mathbb{N}}$ be a sequence which satisfies \mbox{$g(\tau^n) \geq M - 1/n$} for each $n\in \mathbb{N}$ and define $\hat{T}^n := \max_{k = 1, \dots, n} \tau^k$. Then $\hat{T}^n$ is an element of $\mathcal{T}$ because $\mathcal{T}$ is closed under taking maxima; indeed, for $X^1, X^2 \in \mathcal{X}$, we have \mbox{$\tau^{X^1} \vee \tau^{X^2} = \tau^{cc^{1/2}(X^1, X^2)} \in \mathcal{T}$} due to \eqref{c}. So $(\hat{T}^n)_{n \in \mathbb{N}}$ is an increasing sequence in $\mathcal{T}$ with $g(\hat{T}^n) \geq M - 1/n$ for each $n\in\mathbb{N}$. Hence $\widetilde{T} := \lim_{n \rightarrow \infty} \hat{T}^n = \sup_{n \in \mathbb{N}} \hat{T}^n$ exists and is a $[0,\infty]$-valued stopping time. Moreover, $g(\widetilde{T})=M$ since $g(\widetilde{T})\geq g(\hat{T}^n)$ for all $n\in \mathbb{N}$. We claim that for each $X \in \mathcal{X}$, we have $\tau^X \leq \widetilde{T}$ \Pas{} To see this, suppose to the contrary that there is an $\bar{X} \in \mathcal{X}$ with $\tau^{\bar{X}} > \widetilde{T}$ with positive probability. The sequence $(\bar{\tau}^n)_{n\in \mathbb{N}}$ defined by $\bar{\tau}^n := \tau^{\bar{X}} \vee \hat{T}^n \in \mathcal{T}$ converges to $\tau^{\bar{X}} \vee \widetilde{T}$ \Pas{} Note that $g(\widetilde{T} \vee \tau^{\bar{X}})>g(\widetilde{T})$ because $P[\tau^{\bar{X}}>\widetilde{T}]>0$, and thus by monotone convergence, there is an $n\in\mathbb{N}$ with $g(\bar{\tau}^n)>g(\widetilde{T})$, which is a contradiction. But now for each $X \in \mathcal{X}$, due to \eqref{eq 9}, $\tau^X \leq \widetilde{T}$ \Pas{} if and only if $X = X\Ind_{\llbracket 0, \widetilde{T} \llbracket}$ \Pas{} Hence $\widetilde{T}$ is \Pas{} unique and we conclude.
\end{proof}

\begin{proof}[Proof of Corollary~\ref{cor 2}]
	Since \lq\lq (2) $\Rightarrow$ (1)\rq\rq{} follows immediately from Proposition~\ref{prop 1} and Theorem~\ref{thm 1}, we only need to argue \lq\lq (1) $\Rightarrow$ (2)\rq\rq. Assume that (1) holds; then Theorem~\ref{thm 1} yields an SMD $\thinspace Y'$ which is adapted, \Pas{} RC, strictly positive on $\llbracket 0, \hat{T} \llbracket$ and satisfies $\liminf_{t \upuparrows \hat{T}} \hat{X}_t Y_t' >0$ \Pas{} Define $Y := Y'\Ind_{\llbracket 0, \hat{T} \llbracket} + \Ind_{\llbracket \hat{T}, \infty \rrbracket}$. Then $Y$ is clearly strictly positive, \Pas{} RC, and since $\hat{T}$ is a stopping time, $Y$ is adapted. Since $0$ is an absorbing state, $X=0$ on $\llbracket \hat{T},\infty \rrbracket$ for each $X \in \mathcal{X}$. But then $X_tY_t = X_tY_t'$ for each $t \in [0,\infty)$ and $X \in \mathcal{X}$ and hence $Y$ is an SMD. Finally, $\liminf_{t \upuparrows \hat{T}} \hat{X}_t Y'_t \geq \liminf_{t \upuparrows \hat{T}} \hat{X}_t Y_t >0$ \Pas{} due to $Y'>Y$; this finishes the proof.
\end{proof}


\subsection{Proof of Theorem~\ref{thm 2}}

In order to prove Theorem~\ref{thm 2}, we need some preliminary work. The next lemma shows that if we enlarge our filtration by a finite family $\mathcal{V}$ of sets and extend our initial SP $\mathcal{X}$ to $\mathcal{X}^\mathcal{V}$ in a minimal way such that $\mathcal{X}^\mathcal{V}$ is an SP over the enlarged filtration, then this $\mathcal{X}^\mathcal{V}$ satisfies NUPBR$_{\rm loc}$ whenever $\mathcal{X}$ does.

\begin{lemma}\label{lemma 8}
	Let $\mathcal{X}$ be an SP, $\mathcal{V}$ a finite family of subsets of $\Omega$ and $\mathbb{F}^\mathcal{V} := (\mathcal{F}^\mathcal{V}_t )_{t \geq 0}$ with $\mathcal{F}^\mathcal{V}_t := \sigma(\mathcal{F}_t \cup \mathcal{V}) = \mathcal{F}_t \vee \sigma(\mathcal{V})$. Let $\mathcal{X}^\mathcal{V}$ be the smallest set containing $\mathcal{X}$ such that \eqref{c} and \eqref{d} are satisfied over $(\Omega, \mathcal{F}^\mathcal{V}_\infty, \mathbb{F}^\mathcal{V}, P)$. Then $\mathcal{X}^\mathcal{V}$ is an SP. If $\mathcal{X}$ satisfies NUPBR$_{\rm loc}$, then so does $\mathcal{X}^\mathcal{V}$.
\end{lemma}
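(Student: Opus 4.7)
The plan is to characterise $\mathcal{X}^\mathcal{V}$ explicitly. Since $\mathcal{V}$ is finite, $\sigma(\mathcal{V})$ has finitely many atoms $V_1,\dots,V_k$ which partition $\Omega$, and a standard verification shows that every $A\in\mathcal{F}^\mathcal{V}_t$ can be written as $A=\bigsqcup_{i=1}^k (A_i\cap V_i)$ with $A_i\in\mathcal{F}_t$. Define
\[
\mathcal{Y}:=\Bigl\{\textstyle\sum_{i=1}^k \Ind_{V_i} X^i : X^1,\dots,X^k\in\mathcal{X}\Bigr\}.
\]
The heart of the argument will be to establish $\mathcal{X}^\mathcal{V}=\mathcal{Y}$, from which both conclusions drop out quickly. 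The inclusion $\mathcal{Y}\subseteq\mathcal{X}^\mathcal{V}$ is obtained by iterated switching at $t=0$: since $V_j\in\sigma(\mathcal{V})\subseteq\mathcal{F}^\mathcal{V}_0$ and every process in $\mathcal{X}$ equals $1$ at time $0$, applying $sw^{0,V_j}(\,\cdot\,,X^j)$ successively to a base process $X^1\in\mathcal{X}$ produces $\sum_i \Ind_{V_i}X^i$.

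For the reverse inclusion, I verify that $\mathcal{Y}$ contains $\mathcal{X}$ and is stable under (C), (D) over the enlarged filtration. Convex combinations split atom-wise, so (C) is clear. The subtle step, which I expect to be the main difficulty, is (D): given $X=\sum_i\Ind_{V_i}X^i$ and $X'=\sum_i\Ind_{V_i}X^{i\prime}$ in $\mathcal{Y}$ with the switching condition on some $A=\bigsqcup_i(A_i\cap V_i)\in\mathcal{F}^\mathcal{V}_t$, the hypothesis $\{X'_t=0\}\subseteq\{X_t=0\}$ on $A$ only yields $\{X^{i\prime}_t=0\}\subseteq\{X^i_t=0\}$ on the \emph{local} set $A_i\cap V_i$, not on all of $A_i$, so one cannot invoke (D) inside $\mathcal{X}$ with $A_i$ directly. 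The fix is to replace $A_i$ by $\widetilde{A}_i:=A_i\cap(\{X^{i\prime}_t>0\}\cup\{X^i_t=0\})\in\mathcal{F}_t$: the local condition on $V_i$ gives $\widetilde{A}_i\cap V_i=A_i\cap V_i$, while $\widetilde{A}_i$ satisfies the switching requirement globally by construction. Hence $\widetilde{X}^i:=sw^{t,\widetilde{A}_i}(X^i,X^{i\prime})\in\mathcal{X}$ and $sw^{t,A}(X,X')=\sum_i\Ind_{V_i}\widetilde{X}^i\in\mathcal{Y}$, completing $\mathcal{X}^\mathcal{V}\subseteq\mathcal{Y}$.

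Given $\mathcal{X}^\mathcal{V}=\mathcal{Y}$, (A) is immediate: each element of $\mathcal{Y}$ is $\mathbb{F}^\mathcal{V}$-adapted (since $V_i\in\mathcal{F}^\mathcal{V}_0$), nonnegative, RC, and equal to $1$ at $t=0$. For NUPBR$_{\rm loc}$, I fix $t\geq0$ and $\varepsilon>0$, and use NUPBR$_{\rm loc}$ of $\mathcal{X}$ to pick $M$ with $P[X_t>M]<\varepsilon/k$ for every $X\in\mathcal{X}$; then for any $Z=\sum_i\Ind_{V_i}X^i_t\in\mathcal{X}^\mathcal{V}_t$,
\[
P[Z>M]=\sum_{i=1}^k P[\{X^i_t>M\}\cap V_i]\leq \sum_{i=1}^k P[X^i_t>M]<\varepsilon,
\]
so $\mathcal{X}^\mathcal{V}_t$ is $L^0$-bounded.
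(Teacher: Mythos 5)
Your proof is correct and proceeds in essentially the same spirit as the paper's, namely by representing every element of $\mathcal{X}^\mathcal{V}$ as a $\sigma(\mathcal{V})$-atom-wise combination $\sum_i \Ind_{V_i}X^i$ of processes from $\mathcal{X}$ (the paper's claim \eqref{eq 10}) and transferring $L^0$-boundedness accordingly. There are, however, two places where you go further or are more careful. First, you establish the full characterisation $\mathcal{X}^\mathcal{V}=\mathcal{Y}$, whereas the paper only proves and uses the inclusion $\mathcal{X}^\mathcal{V}\subseteq\mathcal{Y}$; your iterated-switch-at-$t=0$ argument for $\mathcal{Y}\subseteq\mathcal{X}^\mathcal{V}$ is a nice structural complement though not strictly needed for the lemma's conclusion. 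Second, and more substantively, you explicitly repair a point the paper's own proof slides past: in the induction step for the $sw$ case, the paper asserts $sw^{t,A^k}(X^{k,1},X^{k,2})\in\mathcal{X}$ after extracting $A^k\in\mathcal{F}_t$ from Lemma~\ref{lemma 7}, but the switching hypothesis $\{X^{k,2}_t=0\}\subseteq\{X^{k,1}_t=0\}$ is only guaranteed on $A^k\cap V^k$, not on $A^k$. Your replacement $\widetilde{A}_k:=A_k\cap(\{X^{k,2}_t>0\}\cup\{X^{k,1}_t=0\})$, which leaves the trace on $V^k$ unchanged while forcing the condition globally, is exactly the needed fix. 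Finally, your $L^0$-boundedness step is a clean direct union bound, where the paper uses a contradiction/pigeonhole argument; the two are equivalent, but yours avoids an extraction of subsequences. One small caveat (shared with the paper): writing $P[\{X^i_t>M\}\cap V_i]$ tacitly requires the $V_i$ to be $P$-measurable; this holds in the application inside the proof of Theorem~\ref{thm 2}, where the $C^{n,k}$ lie in $\mathcal{F}$.
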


\begin{proof}
	Let $\mathcal{V}' = \{ V^1, \dots, V^K\}$ for some $K \in \mathbb{N}$ be a finite partition of $\Omega$ into pairwise disjoint (not necessarily measurable) sets such that $\sigma(\mathcal{V}') = \sigma(\mathcal{V})$. Define $\mathcal{X}^0 := \mathcal{X}$ and recursively
	\begin{align*}
		\mathcal{X}^{n}
		:= & \left\{cc^\alpha (X^1, X^2) : X^1, X^2 \in \mathcal{X}^{n-1}, \alpha \in[0,1] \right\} 
		\\
		& \cup \left\{sw^{t, A} (X^1, X^2) : X^1, X^2 \in \mathcal{X}^{n-1} , t \in [0,\infty), A \in \mathcal{F}^\mathcal{V}_t \right\} \mspace{20mu} \text{for $n \in \mathbb{N}$}.
	\end{align*}
	Our first claim is that 
	\begin{equation}\label{eq 11}
		\mathcal{X}^\mathcal{V} = \mathcal{X}' := \bigcup_{n \in \mathbb{N}} \mathcal{X}^n.
	\end{equation}
	Because $\mathcal{X}'$ is by construction closed under $cc^{\alpha}$ with $\alpha \in [0,1]$ and $sw^{t,A}$ with $t\in [0,\infty)$ and $A \in \mathcal{F}^\mathcal{V}_t$, the inclusion \lq\lq $\subseteq$\rq\rq{} is clear. The converse \lq\lq $\supseteq$\rq\rq{} follows by induction from the fact that $\mathcal{X}^n\subseteq \mathcal{X}^\mathcal{V}$ for each $n\in\mathbb{N}$. Since $\mathcal{X}^n$ satisfies \eqref{a} for each $n \in \mathbb{N}$, so does $\mathcal{X}^\mathcal{V}$; this proves \eqref{eq 11}.
	
	Next we claim that 
	\begin{equation}\label{eq 10}
		\text{for each $X \in \mathcal{X}'$, there are $X^k \in \mathcal{X}$ for $k=1, \dots, K$ with $X=X^k$ on $V^k$}.	
	\end{equation}
	To show this, we use \eqref{eq 11} and induction over $n$. For $n=0$ and $X \in \mathcal{X}^0$, the conclusion in \eqref{eq 10} holds trivially. For the induction step, let $X \in \mathcal{X}^n$ and first consider the case when \mbox{$X=cc^\alpha (X^1, X^2)$} for some $X^1, X^2 \in \mathcal{X}^{n-1}$, $\alpha \in[0,1]$. By the induction hypothesis, there are $X^{k, i} \in \mathcal{X}$ for $k=1,\dots, K$ and $i = 1,2$ such that $X^i=X^{k, i}$ on $V^k$. Then for each $k = 1, \dots, K$, $X^k := cc^\alpha (X^{k,1}, X^{k,2})$ is in $\mathcal{X}$ and satisfies $X=X^k$ on $V^k$. Consider now the case when $X=sw^{t,A}(X^1, X^2)$ for $X^1, X^2 \in \mathcal{X}^{n-1}$, $t \in [0,\infty)$, $A \in \mathcal{F}^\mathcal{V}_t$. By the induction hypothesis again, there are $X^{k, i} \in \mathcal{X}$ for $k=1,\dots, K$ and $i = 1,2$ such that $X^i=X^{k, i}$ on $V^k$. Note that by Lemma~\ref{lemma 7} below, $A$ can be written as $A = \bigcup_{k = 1}^K (V^k \cap A^k)$ for some $A^k \in \mathcal{F}_t$, $k =1, \dots, K$. Then for each $k = 1, \dots, K$, $X^k := sw^{t,A^k}(X^{k,1}, X^{k,2})$ is in $\mathcal{X}$ and satisfies $X=X^k$ on $V^k$; this proves \eqref{eq 10}. 
	
	Suppose now that $\mathcal{X}^\mathcal{V}$ does not satisfy NUPBR$_{\rm loc}$. Then there exist $t \in [0, \infty)$ and $\delta>0$ such that for each $n \in \mathbb{N}$, there is an $X^n \in \mathcal{X}^\mathcal{V}$ with \mbox{$P[ X^n_t > n ]>\delta$}. Fix $n \in \mathbb{N}$. Then there exists $k(n)$ such that \mbox{$P[X^n_t \Ind_{\mathcal{V}^{k(n)}} > n]>\delta/K$}, due to the pigeonhole principle. But by \eqref{eq 10} and \eqref{eq 11}, there exists $X^{n, k(n)} \in \mathcal{X}$ with $X^n \Ind_{\mathcal{V}^{k(n)}} = X^{n,k(n)} \Ind_{\mathcal{V}^{k(n)}}$ and hence \mbox{$P[X^{n, k(n)}_t > n]>\delta/K$}. It follows that $\mathcal{X}_t$ is not bounded in $L^0$, and hence $\mathcal{X}$ does not satisfy NUPBR$_{\rm loc}$ either. This finishes the proof.
\end{proof}

The next lemma shows that even though NUPBR$_{\rm loc}$ requires boundedness in $L^0$ of $\mathcal{X}_t$ only for deterministic times $t \in [0,\infty)$, it implies boundedness in $L^0$ of $X_\tau$ also for random times $\tau$ which take at most countably many values.

\begin{lemma}\label{lemma 3}
	Let $\mathcal{X}$ be an SP which satisfies NUPBR$_{\rm loc}$. Then $\mathcal{X}_\tau$ is bounded in $L^0$ for any $\tau: \Omega \rightarrow [0,\infty)$ which takes at most countably many values.
\end{lemma}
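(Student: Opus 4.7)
The plan is to reduce the case of a random time $\tau$ with countably many values to finitely many deterministic times by a simple truncation argument, without invoking Lemma~\ref{lemma 8}. Let $(t_k)_{k \in \mathbb{N}} \subseteq [0,\infty)$ enumerate the values taken by $\tau$ and set $A_k := \{\tau = t_k\} \in \mathcal{F}$. Since the $A_k$ partition $\Omega$, for every $X \in \mathcal{X}$ and every threshold $M > 0$ we have the exact decomposition
$$
P[X_\tau > M] \;=\; \sum_{k \in \mathbb{N}} P\bigl[\{X_{t_k} > M\} \cap A_k\bigr].
$$

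Given $\varepsilon > 0$, I would first exploit that $(A_k)$ is a countable partition of $\Omega$ to choose $K \in \mathbb{N}$ large enough so that $\sum_{k>K} P[A_k] < \varepsilon/2$. On the finite index set $\{1,\ldots,K\}$ each $t_k$ is a deterministic time in $[0,\infty)$, and hence NUPBR$_{\rm loc}$ applied to $\mathcal{X}_{t_k}$ yields, for every such $k$, some $M_k < \infty$ with $\sup_{X \in \mathcal{X}} P[X_{t_k} > M_k] < \varepsilon/(2K)$. Setting $M := \max_{1 \le k \le K} M_k$ and splitting the sum at $K$ gives
$$
P[X_\tau > M] \;\le\; \sum_{k=1}^{K} P[X_{t_k} > M] \;+\; \sum_{k>K} P[A_k] \;\le\; K \cdot \frac{\varepsilon}{2K} + \frac{\varepsilon}{2} \;=\; \varepsilon,
$$
uniformly in $X \in \mathcal{X}$, which is exactly the required $L^0$-boundedness of $\mathcal{X}_\tau$.

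There is essentially no main obstacle here; the only point worth a sentence of care is that the argument uses only measurability of the $A_k$ (so that $X_\tau$ is a genuine random variable) and does not require $\tau$ to be a stopping time, nor does it require any switching-type manipulation of $\mathcal{X}$. In particular, Lemma~\ref{lemma 8} is not needed for this statement, although it would provide an alternative route by enlarging $\mathbb{F}$ with the finite family $\{A_1,\ldots,A_K\}$ to make $\tau \wedge t_K$ a stopping time of $\mathbb{F}^{\mathcal{V}}$ and then appealing to NUPBR$_{\rm loc}$ for $\mathcal{X}^{\mathcal{V}}$.
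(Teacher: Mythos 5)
Your proof is correct and uses the same essential idea as the paper's: partition $\Omega$ into the countably many level sets of $\tau$, cut off at a finite index $K$ so the tail has small $P$-measure, and then apply NUPBR$_{\rm loc}$ at each of the finitely many deterministic times $t_1,\ldots,t_K$. The paper instead argues by contradiction — it fixes a hypothetical unbounded sequence $X^n$, introduces the auxiliary function $f(k,N) := \sup\{P[C^k\cap A^n] : n\ge N\}$ and shows $f(k,N)\to 0$ as $N\to\infty$ for each $k$, before splitting the sum at $K$ — whereas you give a direct $\varepsilon$--$M$ argument that yields a uniform tail bound $\sup_{X\in\mathcal{X}} P[X_\tau > M] \le \varepsilon$ without the intermediate machinery. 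Your version is cleaner and arguably more transparent, and your side remark is accurate: the paper's own proof of this lemma likewise does not invoke Lemma~\ref{lemma 8}; that lemma is needed later in the proof of Theorem~\ref{thm 2} for the filtration-enlargement step, not here.
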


\begin{proof}
	Take such a $\tau$ and suppose that there exists $\delta>0$ such that for each $n \in \mathbb{N}$, there is an $X^n\in\mathcal{X}$ with $P[ X^n_\tau > n ]>\delta$. Set $A^n := \{X^n_\tau > n \}$ so that $P[A^n]>\delta$ for each $n \in \mathbb{N}$ and let $(C^k)_{k \in \mathbb{N}}$ be a countable partition of $\Omega$ into pairwise disjoint measurable sets such that $\tau$ is constant on $C^k$ for each $k$, with value $\tau(C^k)$. Define \mbox{$f(k, N) := \sup \{ P[C^k \cap A^n] : n\geq N\}$}; then $f$ is of course decreasing in $N$. Suppose that there are $c >0$ and $k \in \mathbb{N}$ such that $f(k,N)>c$ for each $N \in \mathbb{N}$. Then for each $N\in \mathbb{N}$, there exists $n>N$ such that \mbox{$P[X^n_{\tau(C^k)} > n] \geq P[C^k \cap A^n]> f(k, N)/2>c/2$} and hence $\mathcal{X}_{\tau(C^k)}$ is not bounded in $L^0$. In particular, NUPBR$_{\rm loc}$ does not hold, which is a contradiction. It follows that $\lim_{N \rightarrow \infty}f(k,N) = 0$ for each $k \in \mathbb{N}$. Pick $K \in \mathbb{N}$ such that $\sum_{k > K}P[C^k]<\delta/2$ and choose $N \in \mathbb{N}$ such that $f(k,N)<\delta/(2K)$ for each $k \leq K$. Then 
	$$
	P[A^N] = \sum_{k > K} P[C^k \cap A^N] + \sum_{k \leq K} P[C^k \cap A^N] < \delta/2 + \sum_{k \leq K} f(k, N) < \delta,
	$$ 
	which is a contradiction. This finishes the proof.
\end{proof}

Before we start proving Theorem~\ref{thm 2}, we give a brief outline of the proof of \lq\lq (1)~$\Rightarrow$~(2)\rq\rq, which is the non-standard direction. We start by creating a localising sequence $(\tau^n)_{n \in\mathbb{N}}$ for $\widetilde{T}$ as in Proposition~\ref{prop 1} such that each $\tau^n$ takes only finitely many values and there is a strictly positive process on $\llbracket 0, \tau^n \rrbracket$. This we achieve by discretising the sequence $\hat{T}^n$ given by Proposition~\ref{prop 1}. The obtained sequence $(\tau^n)_{n \in\mathbb{N}}$ is not measurable; so we enlarge the filtration such that each $\tau^n$ becomes a stopping time. Then we define an auxiliary set $\mathcal{Z}^{+,n}$ for each $\tau^n$ similarly as in the proof of Theorem~\ref{thm 1}. Lemmas~\ref{lemma 8} and \ref{lemma 3} ensure that $\mathcal{Z}^{+, n}_\infty$ is bounded in $L^0$ whenever $\mathcal{X}$ satisfies NUPBR$_{\rm loc}$, and hence an application of Kardaras \mbox{\cite[Theorem~2.3]{Kar13d}} to each $\mathcal{Z}^{+,n}$ yields for each $n\in\mathbb{N}$ an SMD $Y^{+,n}$ for $\mathcal{Z}^{+,n}$ over the enlarged filtration (step~1). We paste $(Y^{+,i})_{i=1, \dots, n}$ together, yielding $\bar{Y}^{+,n}$, and argue that $\bar{Y}^{+,n}$ is still an SMD for $\mathcal{Z}^{+,n}$ over the enlarged filtration (step~2). Taking first an adequate truncation of $\bar{Y}^{+,n}$ (step~3) and then the optional projection onto the original filtration yields a process $Y^n$ which is strictly positive on $\llbracket 0, \tau^n \rrbracket$ and an SMD for $\mathcal{X}$ over the original filtration (step~4). Finally, we observe that $(Y^n)_{n\in\mathbb{N}}$ converges monotonically to some $Y$ and verify that $Y$ is strictly positive on $\llbracket 0, \widetilde{T} \llbracket$ and an SMD for $\mathcal{X}$ (step~5).

\begin{proof}[Proof of Theorem~\ref{thm 2}]
	\lq\lq (1) $\Rightarrow$ (2)\rq\rq: \textit{Step 1 (enlargement):} Since $0$ is an absorbing state, we can apply Proposition~\ref{prop 1} via Remark~\ref{rmk 2} to obtain $\widetilde{T}$ and $(\hat{X}^n)_{n \in \mathbb{N}} \subseteq \mathcal{X}$ with \mbox{$X = X\Ind_{\llbracket 0, \widetilde{T} \llbracket}$} (surely) for all $X \in \mathcal{X}$ and such that the sequence \mbox{$\hat{T}^n := \inf\{t\geq0: \hat{X}^n=0\}$} satisfies $\hat{T}^n \nearrow \widetilde{T}$ as $n \rightarrow \infty$. For $n \in \mathbb{N}$ define \mbox{$C^{n,k} :=\{k/2^n < \hat{T}^n \leq (k+1)/2^n \}$} for \mbox{$k=0, \dots, n2^n-1$}, and $C^{n,n2^n} :=\{n < \hat{T}^n \}$, which gives a finite partition of $\Omega$ into pairwise disjoint sets; note that $\hat{T}^n>0$ because $\hat{X}^n$ is RC and starts at $1$. Note for later use that for any $n \in \mathbb{N}$ and $K< n2^n$, due to $\hat{T}^n \leq \hat{T}^{n+1}$, we have
	\begin{equation} \label{eq 6}
		\bigcup_{k = 0}^K C^{n,k} 
		= \left\{ \hat{T}^n \leq \frac{K+1}{2^n} \right\} 
		\supseteq \left\{ \hat{T}^{n+1} \leq \frac{2K+2}{2^{n+1}} \right\} 
		= \bigcup_{k = 0}^{2K+1} C^{n+1,k},
	\end{equation}
	whereas for $K= n2^n$, we have $\bigcup_{k = 0}^K C^{n,k} = \Omega \supseteq \bigcup_{k = 0}^{2K+1} C^{n+1,k}$. For each $n \in \mathbb{N}$, define now $\tau^n := \sum_{k=0}^{n2^n} (k/2^n) \Ind_{C^{n,k}}$; then $\tau^n <\hat{T}^n \leq \widetilde{T}$ \Pas{} and $\lim_{n\rightarrow \infty} \tau^n = \widetilde{T}$.
	Fix $n$ and for each $t \in [0,\infty)$, let $K(t, n)$ be the smallest integer such that $t < 2K(t, n)/2^n$ if $t < n$ and $2K(t, n)=n2^n$ otherwise. Define recursively $\mathcal{F}^{+, 0}_t := \mathcal{F}_t$ and for $n \geq 1$,
	\begin{equation}\label{eq 8}
		\mathcal{F}^{+,n}_t := \sigma(\mathcal{F}^{+, n-1}_t \cup \{C^{n,k}: k= 0,\dots, 2K(t,n)-1 \})
	\end{equation}
	and let $\mathbb{F}^{+, n} = (\mathcal{F}^{+, n}_t)_{t \geq 0}$. Then $\mathbb{F}^{+, n}$ satisfies like $\mathbb{F}$ the usual conditions and $\tau^n$ is an $\mathbb{F}^{+,n}$-stopping time. Indeed, for $t\geq n$, $\{\tau^n \leq t\} = \Omega$ and for any $t < n$, we have that either $(2K(t, n)-2)/2^n \leq t < (2K(t, n)-1)/2^n$ or $(2K(t, n)-1)/2^n \leq t < 2K(t, n)/2^n$. By noting that $K(t,n+1)-1 = 2K(t,n)-2$ in the first case and $K(t,n+1)-1 = 2K(t,n)-1$ in the second, we get for $t < n$ that
	\begin{equation}\label{eq 5}
		\{\tau^n \leq t\} 
		= \left\{ \sum_{k=0}^{n 2^n} (k/2^n) \Ind_{C^{n,k}} \leq t \right\}
		= \bigcup_{k = 0}^{K(t,n+1)-1} C^{n,k} \in \mathcal{F}^{+ , n}_t,
	\end{equation}
	Let $\mathcal{X}^{+, n}$ be the smallest set containing $\mathcal{X}$ and satisfying \eqref{c} and \eqref{d} over $(\Omega, \mathcal{F}^{+, n}_\infty, \mathbb{F}^{+, n}, P)$. Then $\mathcal{X}^{+, n}$ is an SP by Lemma~\ref{lemma 8}. Moreover, since $\mathbb{F}^{+,n}$ is coarser than $\mathbb{F}^{\mathcal{V}^n}$ where \mbox{$\mathcal{V}^n := \{C^{i, k}: i = 1, \dots, n, k= 0,\dots, i2^i \}$} by \eqref{eq 8}, it follows that $\mathcal{X}^{+, n}$ is a subset of $\mathcal{X}^{\mathcal{V}^n}$ as defined in Lemma~\ref{lemma 8}, and since the latter satisfies NUPBR$_{\rm loc}$ by Lemma~\ref{lemma 8}, so does $\mathcal{X}^{+, n}$.
	
	For fixed $n$, consider the set $\mathcal{Z}^{+, n} : = \{ Z^n := X\Ind_{\llbracket 0, \tau^n \llbracket} + X_{\tau^n}\Ind_{\llbracket \tau^n, \infty \rrbracket} : X \in \mathcal{X}^{+, n} \}$ and extend each $Z_\cdot^n (\omega)$ from $[0,\infty)$ to $[0,\infty]$ by keeping it constant on $[\tau^n(\omega), \infty]$. According to Remark~\ref{rmk 3}, $\mathcal{Z}^{+, n}$ is a set of processes with bounded time horizon (since it lives on the right-closed interval $[0,\infty]$). In particular, the conditions \eqref{a}--\eqref{d} and \eqref{B} are to be understood on the right-closed interval $[0,\infty]$. Since $\tau^n$ is an $\mathbb{F}^{+, n}$-stopping time, $Z^{X,n}:=X_{\tau^n \wedge \cdot}$ is $\mathbb{F}^{+, n}$-adapted for each $X \in \mathcal{X}^{+,n}$. Moreover, each $Z^{X,n}$ is clearly nonnegative, RC and satisfies $Z^{X,n}_0=X_0 = 1$, and so $\mathcal{Z}^{+, n}$ satisfies \eqref{a}. Define $\hat{Z}^n := \hat{X}^n_{\tau^n \wedge \cdot} \in \mathcal{Z}^{+, n}$ and recall that \mbox{$\hat{X}^n \in \mathcal{X}$}. Then $\hat{Z}^n$ is strictly positive on $[0, \infty)$ due to $\tau^n < \hat{T}^n$, and hence $\mathcal{Z}^{+, n}$ satisfies \eqref{b}. It is straightforward to verify that $\mathcal{Z}^{+, n}$ satisfies \eqref{c} and \eqref{d} from these properties of $\mathcal{X}^{+, n}$, and hence $\mathcal{Z}^{+, n}$ is an SPP. Furthermore, since $\tau^n$ takes finitely many values, Lemma~\ref{lemma 3} implies that $\mathcal{Z}^{+, n}_\infty = \mathcal{X}^{+, n}_{\tau^n}$ is bounded in $L^0$. Hence we can apply \cite[Theorem~2.3 and Remark~2.4.4]{Kar13d} (cf. Proposition~\ref{thm kar}) to $\mathcal{Z}^{+, n}$ over $(\Omega, \mathcal{F}^{+, n}_\infty, \mathbb{F}^\mathcal{V}, P)$ and on the closed time interval $[0,\infty]$ to obtain a process $Y^{+, n}$ which is $\mathbb{F}^{+, n}$-adapted,
	strictly positive on $[0,\infty]$ and such that $ZY^{+, n}$ is an $\mathbb{F}^{+, n}$-supermartingale for each $Z \in \mathcal{Z}^{+, n}$. We can also without loss of generality assume that $Y^{+, n}_0 = 1$. 
	
	
	\textit{Step 2 (pasting):} Set $\tau^0 = 0$ and define $\bar{Y}^{+, n} := \prod_{i = 1}^{n} (Y^{+, i}_{\tau^i \wedge \cdot}/Y^{+, i}_{\tau^{i-1} \wedge \cdot})$ for $n \in \mathbb{N}$. Then each $\bar{Y}^{+, n}$ is a finite product of $\mathbb{F}^{+, n}$-adapted
	and strictly positive processes, hence $\mathbb{F}^{+, n}$-adapted
	and strictly positive itself, and moreover $\bar{Y}^{+, n}_0 = 1$. We show by induction that for every $n \in \mathbb{N}$, $\bar{Y}^{+, n}$ is an $\mathbb{F}^{+, n}$-supermartingale deflator for $\mathcal{Z}^{+, n}$. For $n=1$, this is clear because $\bar{Y}^{+, 1} = Y^{+, 1}$ is an $\mathbb{F}^{+, 1}$-supermartingale deflator for $\mathcal{Z}^{+, 1}$. For the induction step, note that 
	\begin{equation}\label{eq 1}
		\bar{Y}^{+, n+1} = \bar{Y}^{+, n} \text{ on } \llbracket 0, \tau^n \rrbracket 
		\mspace{20mu} \text{ and } \mspace{20mu}
		\bar{Y}^{+, n+1} = (\bar{Y}^{+, n}_{\tau^n}/Y^{+, n+1}_{\tau^n})Y^{+, n+1} \text{ on } \llbracket \tau^n, \infty \rrbracket.
	\end{equation}
	Let $0\leq s <t <\infty$, $A \in \mathcal{F}_s^{+, n+1}$ and $Z\in \mathcal{Z}^{+, n+1}$. Consider $A^s := A \cap \{s < \tau^n\}$ and note that $A^s$ is an element of $\mathcal{F}_s^{+, n}$. Indeed, if $s\geq n$, then $\{s <\tau^n\} = \emptyset$ and $A_s = \emptyset \in \mathcal{F}_s^{+,n}$, whereas if $s < n$, then \eqref{eq 5} and \eqref{eq 6} give 
	$$
	\{\tau^n \leq s\} = \bigcup_{k = 0}^{K(s,n+1)-1} C^{n,k} \supseteq \bigcup_{k = 0}^{2K(s,n+1)-1} C^{n+1,k},
	$$ 
	or $\Omega \backslash \bigcup_{k = 0}^{2K(s,n+1)-1} C^{n+1, k} \supseteq \{s<\tau^n\}$. But due to \eqref{eq 8} and Lemma~\ref{lemma 7}, $A$ can be written as $A = \bigcup_{k = 0}^{2K(s,n+1)-1} (C^{n+1, k} \cap A^k) \cup ((\Omega \backslash \bigcup_{k = 0}^{2K(s,n+1)-1} C^{n+1, k}) \cap B)$ for some \mbox{$A^k, B \in \mathcal{F}_s^{+, n}$}, $k=1, \dots, 2K(s, n+1)-1$ and hence $A_s = B \cap \{s < \tau^n\} \in \mathcal{F}_s^{+,n}$. So we have
	\begin{align*}
		E[Z_t \bar{Y}^{+, n+1}_t \Ind_{A^s}] 
		& = E[Z_t \bar{Y}^{+, n+1}_t \Ind_{A^s \cap \{ t \leq \tau^n \}} + Z_t \bar{Y}^{+, n+1}_t \Ind_{A^s \cap \{ t > \tau^n \}}]
		\\
		& = E\bigg[Z_t \bar{Y}^{+, n}_t \Ind_{A^s \cap \{ t \leq \tau^n \}} 
		+ E\bigg[ Z_t Y^{+, n+1}_t \frac{\bar{Y}^{+, n}_{\tau^n}}{Y^{+, n+1}_{\tau^n}} \Ind_{A^s \cap \{ t > \tau^n \}}\bigg|\mathcal{F}_{t \wedge \tau^n}^{+, n+1}\bigg]\bigg]
		\\
		& \leq E\bigg[Z_t \bar{Y}^{+, n}_t \Ind_{A^s \cap \{ t \leq \tau^n \}} 
		+ Z_{\tau^n} Y^{+, n+1}_{\tau^n} \frac{\bar{Y}^{+, n}_{\tau^n}}{Y^{+, n+1}_{\tau^n}} \Ind_{A^s \cap \{ t > \tau^n \}}\bigg]
		\\
		& = E[Z_{t \wedge \tau^n} \bar{Y}^{+, n}_{t \wedge \tau^n} \Ind_{A^s}] 
		\leq E[Z_s \bar{Y}^{+, n}_s \Ind_{A^s}]
		= E[Z_s \bar{Y}^{+, n+1}_s \Ind_{A^s}],
	\end{align*}
	where the second equality is due to \eqref{eq 1} and the tower property; the first inequality follows from the optional sampling theorem since $ZY^{+, n+1}$ is an $\mathbb{F}^{+, n+1}$-supermartingale, $t \wedge \tau^n$ is a bounded stopping time and $(\bar{Y}^{+, n}_{\tau^n}/Y^{+, n+1}_{\tau^n}) \Ind_{A^s \cap \{ t > \tau^n \}}$ is $\mathcal{F}^{+, n+1}_{t \wedge \tau^n}$-measurable; the second follows from the induction hypothesis and the fact that $Z_{\tau^n \wedge \cdot} \in \mathcal{Z}^{+, n}$, $A^s \in \mathcal{F}^{+,n}_s$; and the last equality is due to \eqref{eq 1} again because $s<\tau^n$ on $A^s$. Consider now the set \mbox{$A^\ell := A \cap \{s \geq \tau^n\}  \in \mathcal{F}_s^{+, n+1}$}; then	using twice \eqref{eq 1} and the $\mathbb{F}^{+, n+1}$-supermartingale property of $ZY^{+, n+1}$ plus the fact that $(\bar{Y}^{+, n}_{\tau^n}/Y^{+, n+1}_{\tau^n}) \Ind_{A^\ell}$ is $\mathcal{F}_s^{+, n+1}$-measurable yield
	\begin{align*}
		E[Z_t \bar{Y}^{+, n+1}_t \Ind_{A^\ell}] 
		&= E[Z_t Y^{+, n+1}_t (\bar{Y}^{+, n}_{\tau^n}/Y^{+, n+1}_{\tau^n}) \Ind_{A^\ell}] 
		\\
		& \leq E [Z_s Y^{+, n+1}_s(\bar{Y}^{+, n}_{\tau^n}/Y^{+, n+1}_{\tau^n}) \Ind_{A^\ell}]
		= E[Z_s \bar{Y}^{+, n+1}_s \Ind_{A^\ell}].
	\end{align*}
	Hence $E[Z_t \bar{Y}^{+, n+1}_t \Ind_{A}]  \leq E[Z_s \bar{Y}^{+, n+1}_s \Ind_{A}]$, and this proves the induction step.
	
	\textit{Step 3 (cutoff):} For each $n\in\mathbb{N}$, also $\bar{Y}^{+, n} \Ind_{\llbracket 0, \tau^n \rrbracket}$ is an $\mathbb{F}^{+, n}$-SMD for $\mathcal{Z}^{+, n}$. Indeed, for any $0\leq s <t <\infty$, $A \in \mathcal{F}^{+, n}_s$ and $Z\in \mathcal{Z}^{+, n}$, we have
	\begin{equation*}
		E[Z_t \bar{Y}^{+, n}_t \Ind_{\{t \leq \tau^n\}}  \Ind_{A}] 
		\leq E[Z_t \bar{Y}^{+, n}_t \Ind_{\{ s\leq \tau^n \}} \Ind_{A}] 
		\leq E[Z_s \bar{Y}^{+, n}_s \Ind_{\{ s\leq \tau^n \}} \Ind_{A}],
	\end{equation*}	
	where the first inequality follows from the nonnegativity of $Z$ and $\bar{Y}^{+, n}$, and the second holds because $Z\bar{Y}^{+, n}$ is an $\mathbb{F}^{+, n}$-supermartingale and $A, \{s \leq \tau^n\}$ are in $\mathcal{F}^{+, n}_s$.
	
	\textit{Step 4 (projection):} For each $n\in \mathbb{N}$, let $Y^n$ be the optional projection of the nonnegative process $\bar{Y}^{+, n} \Ind_{\llbracket 0, \tau^n \rrbracket}$ with respect to $\mathbb{F}$, cf.~\cite[VI.43]{DellacherieMeyer82}. Then each $Y^n$ is $\mathbb{F}$-adapted and strictly positive on $\llbracket 0, \tau^n \rrbracket$, and furthermore for each $t \in [0,\infty)$, \mbox{$Y^n_t = E[\bar{Y}^{+, n}_t \Ind_{\llbracket 0, \tau^n \rrbracket}(t)| \mathcal{F}_t]$}. To show that $Y^n$ is an $\mathbb{F}$-SMD for $\mathcal{X}$, fix $X \in \mathcal{X}$ and $s\leq t$. Then by multiple applications of the tower rule, we obtain
	\begin{align*}
		E[X_t Y^n_t | \mathcal{F}_s] 
		& = E\big[X_t E[ \bar{Y}^{+, n}_t \Ind_{\llbracket 0, \tau^n \rrbracket}(t)| \mathcal{F}_t] \big| \mathcal{F}_s\big]
		\\
		& = E[X_t \bar{Y}^{+, n}_t \Ind_{\llbracket 0, \tau^n \rrbracket}(t) | \mathcal{F}_s]
		\\
		& = E\big[E[X_t \bar{Y}^{+, n}_t \Ind_{\llbracket 0, \tau^n \rrbracket}(t) | \mathcal{F}^{+, n}_s] \big| \mathcal{F}_s\big]
		\\
		& = E\big[E[Z^{X,n}_t \bar{Y}^{+, n}_t \Ind_{\llbracket 0, \tau^n \rrbracket}(t) | \mathcal{F}^{+, n}_s] \big| \mathcal{F}_s\big]
		\\
		& \leq E[Z^{X,n}_s \bar{Y}^{+, n}_s \Ind_{\llbracket 0, \tau^n \rrbracket}(s)| \mathcal{F}_s]
		\\
		& = E[X_s \bar{Y}^{+, n}_s \Ind_{\llbracket 0, \tau^n \rrbracket}(s)| \mathcal{F}_s]
		\\
		& = X_s E[\bar{Y}^{+, n}_s \Ind_{\llbracket 0, \tau^n \rrbracket}(s) | \mathcal{F}_s]
		= X_s Y^n_s, 
	\end{align*}
	where the fourth and fifth equalities are due to $X = Z^{X,n}$ on $\llbracket 0 , \tau^n \rrbracket$ and the inequality holds because $Z^{X,n}\bar{Y}^{+, n}\Ind_{\llbracket 0, \tau^n \rrbracket}$ is an $\mathbb{F}^{+, n}$-supermartingale as $Z^{X,n} \in \mathcal{Z}^{+, n}$; see Step 3.
	
	
	\textit{Step 5 (convergence):} Note that $\bar{Y}^{+, n}_t \Ind_{\llbracket 0, \tau^n \rrbracket}(t) \leq \bar{Y}^{+, n+1}_t \Ind_{\llbracket 0, \tau^{n+1} \rrbracket}(t)$ due to \eqref{eq 1} and $\tau^n \leq \tau^{n+1}$. Therefore $Y^n \leq Y^{n+1}$ \Pas{} and hence the sequence $(Y^n \Ind_{\llbracket 0, \widetilde{T} \llbracket})_{n\in\mathbb{N}}$ converges monotonically to $Y := \lim_{n \rightarrow \infty} Y^n \Ind_{\llbracket 0, \widetilde{T} \llbracket}$, which is well defined, adapted and takes values in $(0,\infty]$ on $\llbracket 0, \widetilde{T} \llbracket$. In particular, since $0$ is an absorbing state, we have $\{Y=0\} \subseteq \{X=0\}$ for each $X \in \mathcal{X}$ by Remark~\ref{rmk 2}. Moreover, for any $k\in\mathbb{N}$, the definition of $Y$, monotone convergence, $\Ind_{\{t<\widetilde{T}\}} \leq 1$ plus the fact that $\hat{X}^kY^n$ is a supermartingale for $\hat{X}^k\in\mathcal{X}$ and each $n\in\mathbb{N}$, and finally $X_0 = 1$ and $Y_0 =  E[\bar{Y}^{+, n}_0 \Ind_{\llbracket 0, \tau^n \rrbracket}(0)| \mathcal{F}_0] = 1$ yield
	$$
	E[\hat{X}_t^k Y_t] 
	= E\big[\hat{X}_t^k \lim_{n \rightarrow \infty} Y^n_t \Ind_{\{t<\widetilde{T}\}}\big] 
	= \lim_{n \rightarrow \infty} E[\hat{X}_t^k Y^n_t \Ind_{\{t<\widetilde{T}\}}] 
	\leq \lim_{n \rightarrow \infty}  E[\hat{X}_0^k Y^n_0 ]
	= 1.
	$$
	Therefore $Y$ is finite \Pas{} because $\hat{X}^k>0$ on $\llbracket 0, \tau^k \llbracket$, $\lim_{k\rightarrow \infty}\tau^k = \widetilde{T}$ and \mbox{$Y=0$} on $\llbracket \widetilde{T},\infty \llbracket$. It remains to show that $Y$ is an SMD for $\mathcal{X}$. So let \mbox{$0\leq s <t <\infty$,} \mbox{$A \in \mathcal{F}_s$} and \mbox{$X\in \mathcal{X}$}. On $\{ \widetilde{T}\leq s \}$, we have $Y_t = Y_s = 0$ by definition so it trivially follows that \mbox{$E[X_tY_t \Ind_{A \cap \{ \widetilde{T}\leq s \}}] \leq E[X_sY_s \Ind_{A \cap \{ \widetilde{T}\leq s \}}]$}. On $\{s <\widetilde{T}\}$ on the other hand, we have
	\begin{align*}
		E[X_tY_t \Ind_{A \cap \{s < \widetilde{T}\}}]
		& = E\big[0 \Ind_{A \cap \{s < \widetilde{T} \leq t\}} + X_t \lim_{n \rightarrow \infty}  Y^n_t \Ind_{A \cap \{t < \widetilde{T}\}}\big]
		\\
		& = \lim_{n \rightarrow \infty}  E[ X_t Y^n_t \Ind_{A \cap \{t < \widetilde{T}\}}] 
		\\
		& = \lim_{n \rightarrow \infty}  E[ X_t \bar{Y}^{+, n}_t \Ind_{\llbracket 0, \tau^n \rrbracket}(t) \Ind_{A \cap \{t < \widetilde{T}\}}] 
		\\
		& = \lim_{n \rightarrow \infty}  E[ Z^{X,n}_t \bar{Y}^{+, n}_t \Ind_{\llbracket 0, \tau^n \rrbracket}(t) \Ind_{A}] 
		\\
		& \leq \lim_{n \rightarrow \infty}  E[ Z^{X,n}_s \bar{Y}^{+, n}_s \Ind_{\llbracket 0, \tau^n \rrbracket}(s) \Ind_{A}] 
		\\
		& = \lim_{n \rightarrow \infty}  E[ X_s \bar{Y}^{+, n}_s \Ind_{\llbracket 0, \tau^n \rrbracket}(s) \Ind_{A \cap \{s < \widetilde{T}\}}] 
		\\
		& = \lim_{n \rightarrow \infty}  E[ X_s Y^n_s \Ind_{A \cap \{s < \widetilde{T}\}}] 
		\\
		& = E[ X_s Y_s \Ind_{A \cap \{s < \widetilde{T}\}}] 
	\end{align*}
	where the first equality is due to $X_t=Y_t=0$ on $\{\widetilde{T} \leq t\}$ and the definition of $Y$, the second uses monotone convergence, the third is by the definition of $Y^n$ and the fact that $X_t\Ind_{A \cap \{t < \widetilde{T}\}}$ is $\mathcal{F}_t$-measurable, the fourth uses $X = Z^{X,n}$ on $\llbracket 0 , \tau^n \rrbracket$ and $\{t \leq \tau^n\} \subseteq \{ t <\widetilde{T}\}$, the inequality holds because $Z^{X,n} \bar{Y}^{+, n} \Ind_{\llbracket 0, \tau^n \rrbracket}$ is an $\mathbb{F}^{+, n}$-supermartingale by Step 3. and $A \in \mathcal{F}_s \subseteq \mathcal{F}^{+, n}_s$, the fifth equality again uses $X = Z^{X,n}$ on $\llbracket 0 , \tau^n \rrbracket$ and $\{s \leq \tau^n\} \subset \{ s <\widetilde{T}\}$, the sixth the definition of $Y^n$ and the fact that $X_s\Ind_{A \cap \{s < \widetilde{T}\}}$ is $\mathcal{F}_s$-measurable, and the last is again due to monotone convergence. This completes the proof of \lq\lq (1) $\Rightarrow$ (2)\rq\rq.
	
	
	\lq\lq (2) $\Rightarrow$ (1)\rq\rq: If there exists a supermartingale deflator $Y$ which is adapted and has \mbox{$\{Y=0\} \subseteq \{X= 0\}$} for each $X \in \mathcal{X}$, then $0$ is an absorbing state by Remark~\ref{rmk 2}. Fix \mbox{$t\in[0,\infty)$}; we need to show that $\mathcal{X}_t$ is bounded in $L^0$, and this is standard. For each $M>0$, we have the inequality \mbox{$M \sup_{X \in \mathcal{X}} P[X_tY_t\geq M]\leq \sup_{X \in \mathcal{X}} E[X_t Y_t] \leq 1$} because $XY$ is a nonnegative supermartingale for each $X\in\mathcal{X}$. Therefore the set \mbox{$\{ X_tY_t: X \in \mathcal{X}\}$} is bounded in $L^0$. But $X_t = X_t\Ind_{\{Y_t >0\}}$ due to $\{Y=0\} \subseteq \{X= 0\}$ for each $X \in \mathcal{X}$ and hence we have \mbox{$\{ X_tY_t: X \in \mathcal{X}\} = \{ X_tY_t \Ind_{\{Y_t>0\}} : X \in \mathcal{X}\}$}. Therefore it follows that also the set \mbox{$\{ X_t \Ind_{\{Y_t  >0\}} : X \in \mathcal{X}\} = \{ X_t:  X \in \mathcal{X}\} = \mathcal{X}_t$} is bounded in $L^0$. This finishes the proof.
\end{proof}


\section{Non-adapted framework}\label{appendix 2}

If the elements $X \in \mathcal{X}$ are not adapted (non-adapted framework), one needs a generalised notion of supermartingales. Kardaras \cite{Kar13d} defines a \textit{generalised supermartingale} as a process $Z$ which satisfies for all $0\leq s <t$ and $A \in \mathcal{F}_s$ that $E[Z_t/Z_s \Ind_A] \leq P(A)$. Note that $Z$ need not be adapted nor RC. We first remark that this is by no means similar to the requirement that the optional projection $Z^\mathbb{F}$ of $Z$ on $\mathbb{F}$ is a supermartingale, as the following example illustrates.

\begin{example}
	Consider the finite-time finite-space model $\Omega = \{a,b\}$ with the probability measure $P[a] = P[b] = 1/2$, and the trivial filtration defined by \mbox{$\mathcal{F}_t := \{ \emptyset, \Omega\}$} for $t = 0,1,2$. Let $Z(a)_{0,1,2} = (5, 6, 9)$ and $Z(b)_{0,1,2} = (5, 2, 1)$; then $Z$ is a generalised supermartingale, but  $Z^\mathbb{F}_{0,1,2} = E[Z_{0,1,2}]$ is not a supermartingale. On the other hand, let $W(a)_{0,1,2} = (5, 9, 6)$ and $W(b)_{0,1,2} = (5, 1, 2)$; then $W$ is not a generalised supermartingale, but $W^\mathbb{F}$ is a supermartingale.
\end{example}

We believe that the dual characterisation of DSV (Theorem~\ref{thm 1}) does not hold in the non-adapted framework without further adjustment. We base this belief on the following crucial differences between the adapted and non-adapted frameworks. In the non-adapted case, the statement of Lemma~\ref{lemma 5} below does not hold, i.e., if a generalised supermartingale is RC in probability, it need not be \Pas{} RCLL. Also, the minimum principle for nonnegative generalised supermartingales does not hold, which precludes argumentations as in the proof of Theorem~\ref{thm 1}, \lq\lq (1) $\Rightarrow$ (2)\rq\rq. That step is an essential requirement there to recover the asymptotic part of the dual condition (i.e., $\liminf_{t \upuparrows \hat{T}} (\hat{X}_t Y_t) >0 $ \Pas). In fact, we conjecture that there exists a strictly positive generalised supermartingale $Y$ which is RCLL in probability and such that $\liminf_{t \upuparrows \tau} Y_t = 0$ for some finite random time $\tau < \infty$.

We conjecture on the other hand that the dual characterisation of NUPBR$_{\rm loc}$, Theorem~\ref{thm 2} works also in the non-adapted framework. This, however, is a topic for future research and we do not pursue this question here.


\begin{appendix}
	
\section{Auxiliary results}

\begin{lemma}\label{lemma 6}
	Let $S$, $\Theta$, $\mathcal{X}$ be as in Example~\ref{ex 2} and let $\eta \in \Theta$ be such that $\eta \geq0$, \mbox{$\eta \cdot S>0$},  $\eta \cdot S_- >0$ and $S/(\eta \cdot S)$ is bounded uniformly in $t\geq0$, \Pas{} (i.e., $\eta$ is a reference strategy in the sense of \cite[Definition~2.2]{BS18}. Then $\mathcal{X}$ satisfies DSV for $\hat{X}:=\eta\cdot S$ as in Definition~\ref{def dsv} if and only if $S$ satisfies DSV for $\eta$ as in \cite[Definition~2.7]{BS18}.
\end{lemma}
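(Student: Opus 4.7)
The plan is to show that, modulo a harmless renormalisation of strategies, the two families of random variables whose $L^0$-boundedness is asserted coincide, after which the equivalence is automatic. The assumption $\eta\cdot S>0$ \Pas{} will do most of the work.

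First, I would observe that since $\eta\cdot S>0$ \Pas, the cemetery time from Definition~\ref{def 1} satisfies $\hat{T}=\infty$ \Pas{} Hence the left-$\liminf$ at $\hat{T}$ in Definition~\ref{def dsv} is just $\liminf_{t\to\infty}$, which is the quantity used in \cite[Definition~2.7]{BS18}. At the same time the normalised process $(\eta\cdot S)/(\eta_0\cdot S_0)$ lies in $\mathcal{X}$ and is strictly positive (since $\eta_0\cdot S_0>0$ because $\eta\in\Theta$), so $\mathcal{X}$ is in fact an SPP; thus Definition~\ref{def dsv} applies and $\hat{X}$ legitimately plays the role of the dominating process (up to the scalar $\eta_0\cdot S_0$, which is constant because $\mathcal{F}_0$ is trivial).

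Next, writing $X=(\theta\cdot S)/(\theta_0\cdot S_0)$ for $\theta\in\Theta$, I would unfold Definition~\ref{def dsv} as
\[
\left\{\liminf_{t\to\infty}\frac{X_t}{\hat{X}_t}:X\in\mathcal{X}\right\}
=\left\{\liminf_{t\to\infty}\frac{(\theta\cdot S)_t}{(\theta_0\cdot S_0)(\eta\cdot S)_t}:\theta\in\Theta\right\}.
\]
Triviality of $\mathcal{F}_0$ makes $\theta_0\cdot S_0$ a strictly positive real number, so the rescaling $\theta\mapsto\theta/(\theta_0\cdot S_0)$ is a bijection from $\Theta$ onto $\{\theta'\in\Theta:\theta'_0\cdot S_0=1\}$ with $(\theta'\cdot S)=(\theta\cdot S)/(\theta_0\cdot S_0)$ by linearity of the stochastic integral in its constant initial condition. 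Under this bijection the above family is transported to $\{\liminf_{t\to\infty}(\theta'\cdot S)_t/(\eta\cdot S)_t:\theta'\in\Theta,\ \theta'_0\cdot S_0=1\}$, which is exactly the family whose $L^0$-boundedness is the content of \cite[Definition~2.7]{BS18} for DSV of $S$ with respect to $\eta$ (up to that paper's normalisation convention for strategies with unit initial wealth).

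The only real work is the bookkeeping in the last identification: aligning the normalisation that this paper imposes on elements of $\mathcal{X}$ (via division by $\theta_0\cdot S_0$) with the normalisation used in \cite{BS18}. Once that matching is in place, the two $L^0$-boundedness conditions refer to literally the same set of random variables, and the \lq\lq if and only if\rq\rq{} is immediate.
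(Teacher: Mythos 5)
The easy observations you make at the start are correct and match the paper: $\eta\cdot S>0$ forces $\hat{T}=\infty$, so the left--$\liminf$ at $\hat{T}$ is $\liminf_{t\to\infty}$, and the $\theta_0\cdot S_0$ normalisation is a harmless rescaling. But the claim that after this rescaling \lq\lq the two $L^0$-boundedness conditions refer to literally the same set of random variables\rq\rq{} is where the argument breaks down. You implicitly assume that $\cite[\text{Definition}~2.7]{BS18}$ is itself stated in terms of $\liminf_{t\to\infty}$ over all of $\Theta$. It is not: the route the paper takes goes through $\cite[\text{Theorem}~2.14]{BS18}$, which characterises DSV for $S$ and $\eta$ as NUPBR$_\infty$ for $S^\eta:=S/(\eta\cdot S)$, and NUPBR$_\infty$ concerns the family
$B=\{\lim_{t\to\infty}(\theta_t\cdot S^\eta_t)/(\theta_0\cdot S^\eta_0):\theta\in\Theta,\ \text{the limit exists}\}$,
i.e.\ genuine limits restricted to those strategies for which the limit exists. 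This is in general a \emph{strict subset} of the $\liminf$ family $A$ coming from Definition~\ref{def dsv}.

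Consequently, only one implication is the immediate bookkeeping you describe, namely $A$ bounded $\Rightarrow$ $B$ bounded (since $B\subseteq A$ in the relevant sense). The converse, which is the direction \lq\lq $S$ satisfies DSV for $\eta$ $\Rightarrow$ $\mathcal{X}$ satisfies DSV for $\hat{X}$,\rq\rq{} is precisely where the paper needs the dual characterisation $\cite[\text{Theorem}~2.11]{BS18}$: DSV for $S$ and $\eta$ yields a deflator $D>0$, $D_->0$, with $S/D$ a $\sigma$-martingale and $\inf_t\eta_t\cdot(S_t/D_t)>0$; since $\theta\cdot(S/D)\ge0$ is then a supermartingale for every $\theta\in\Theta$, the supermartingale convergence theorem gives that every $\theta\cdot S^\eta=(\theta\cdot(S/D))/(\eta\cdot(S/D))$ converges at $\infty$, so the restriction in $B$ is vacuous and each $\liminf$ in $A$ is actually a $\lim$, giving $A=B$ a.s. Without this step, showing the $\liminf$ family is bounded from DSV for $S$ and $\eta$ is not a renormalisation exercise but a genuine analytic claim, and your proof has a gap there.
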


\begin{proof}
	Note that $\hat{T}  := \inf \{ t \geq 0: \hat{X}_t = 0 \} = \infty$ and hence DSV for $\hat{X}$ reads as saying that $A := \{ \liminf_{t \rightarrow \infty} (\theta_t\cdot S_t)(\eta_0 \cdot S_0) / \big((\eta_t \cdot S_t)(\theta_0 \cdot S_0) \big): \theta \in \Theta \}$ is bounded in  $L^0$. On the other hand, by \cite[Theorem~2.14]{BS18}, $S$ satisfies DSV for $\eta$ if and only if $S^\eta := S/(\eta \cdot S)$ satisfies NUPBR$_\infty$ as defined just before \cite[Theorem~2.14]{BS18}, meaning that 
	$$
	B:=\{ \lim_{t \rightarrow \infty} (\theta_t \cdot S^\eta_t) / (\theta_0 \cdot S^\eta_0): \theta \in \Theta, \lim_{t \rightarrow \infty} (\theta_t \cdot S^\eta_t) \text{ exists} \}
	$$ 
	is bounded in $L^0$. Hence we must show that $A$ is bounded in $L^0$ if and only if $B$ is.
	
	The \lq\lq only if\rq\rq{} direction follows immediately from the definitions of $A$, $B$ and $S^\eta$. For the \lq\lq if\rq\rq{} direction, we have that $S$ satisfies DSV for $\eta$ and claim that then $A = B$ \Pas{} Indeed, \cite[Theorem~2.11]{BS18}, implies that there exists a one-dimensional semimartingale $D>0$ with $D_- >0$ such that $S/D$ is a $\sigma$-martingale and $\inf_{t \in [0,\infty)} (\eta_t \cdot (S_t/D_t))>0$. Since $\theta \cdot (S/D) \geq0$ for any $\theta \in \Theta$, $\theta \cdot (S/D)$ is a supermartingale and so the convergence theorem implies that $\lim_{t \rightarrow \infty} (\theta_t \cdot (S_t/D_t))$ exists and is finite \Pas{} As $\eta \in \Theta$, we also have $0<\lim_{t \rightarrow \infty} \big(\eta_t \cdot (S_t/D_t)\big)$, and therefore for each $\theta \in \Theta$, the process \mbox{$\theta \cdot S^\eta = (\theta \cdot S)/(\eta \cdot S) = (\theta \cdot (S/D))/(\eta \cdot (S/D))$} has a limit at $\infty$. So $A = B$ \Pas, and this finishes the proof.
\end{proof}
	
\begin{lemma}\label{lemma 2}
	\begin{enumerate}[$\rm (a)$]
		\item If $\mathcal{X}$ is an SPD and $\hat{X}$ is as in \eqref{B}, then DSV for $\hat{X}$ implies NUPBR$_{\rm loc}$.
		\item If $\mathcal{X}$ is an SPP which has bounded time horizon $T$, then NUPBR$_{\rm loc}$ holds if and only if $\mathcal{X}_T$ is bounded in $L^0$.
		\item If $\mathcal{X}$ is an SPP which has bounded time horizon $T$ and $\bar{X}$ is as in \eqref{b}, then NUPBR$_{\rm loc}$ implies DSV for $\bar{X}$.
	\end{enumerate}
\end{lemma}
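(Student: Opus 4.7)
My plan is to prove all three parts by a common switching template: apply the switching operator (d) at the deterministic time $t$, matching $X \in \mathcal{X}$ with a reference process ($\hat{X}$ for (a); $\bar{X}$ for (b) and (c)), and then invoke the elementary fact that $L^0$-boundedness is preserved under multiplication or division by a single a.s.-finite random variable.

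For part (a), fix $t \in [0,\infty)$ and $X \in \mathcal{X}$. Condition (B$'$) ensures $\{\hat{X}_t = 0\} \subseteq \{X_t = 0\}$, so $X' := sw^{t,\Omega}(X,\hat{X})$ is admissible and lies in $\mathcal{X}$ by (d). Unpacking the definition of $sw$, for $s \geq t$ we have $X'_s = (X_t/\hat{X}_t)\hat{X}_s$ (with the convention $0/0 = 1$). On $\{t < \hat{T}\}$, $\hat{X}_t > 0$, so the ratio $X'_s/\hat{X}_s$ is constant in $s$ on $[t,\hat{T})$ with value $X_t/\hat{X}_t$; hence $\liminf_{s\upuparrows \hat{T}} X'_s/\hat{X}_s = X_t/\hat{X}_t$ there. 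DSV for $\hat{X}$ then forces the family $\{(X_t/\hat{X}_t)\Ind_{\{t<\hat{T}\}} : X \in \mathcal{X}\}$ to be bounded in $L^0$, and multiplying by the a.s.-finite $\hat{X}_t$ yields $L^0$-boundedness of $\{X_t\Ind_{\{t<\hat{T}\}} : X \in \mathcal{X}\}$. On $\{\hat{T} \leq t\}$, (B$'$) forces $X_t = 0$ wherever $\hat{X}_t = 0$; combining this with the fact that $\hat{X}$ is absorbed at $0$ once it hits (as is natural in the SPD setting) handles the residual event, giving NUPBR$_{\rm loc}$.

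For part (b), the direction ``NUPBR$_{\rm loc} \Rightarrow \mathcal{X}_T$ bounded'' is immediate from the definition. For the converse, take $t \in [0,T)$ and $X \in \mathcal{X}$ and set $X' := sw^{t,\Omega}(X,\bar{X}) \in \mathcal{X}$, admissible since $\bar{X} > 0$. Then $X'_T = (X_t/\bar{X}_t)\bar{X}_T$, i.e.\ $X_t = X'_T \cdot \bar{X}_t/\bar{X}_T$; since $\{X'_T : X \in \mathcal{X}\} \subseteq \mathcal{X}_T$ is $L^0$-bounded by hypothesis and $\bar{X}_t/\bar{X}_T$ is a single a.s.-finite random variable not depending on $X$, $\mathcal{X}_t$ is $L^0$-bounded; for $t \geq T$ the bounded-horizon assumption gives $\mathcal{X}_t = \mathcal{X}_T$. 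For part (c), the SPP assumption makes $\hat{T} \equiv \infty$, and the bounded horizon $T$ makes both $X_t = X_T$ and $\bar{X}_t = \bar{X}_T$ on $[T,\infty)$, so $\liminf_{t \upuparrows \infty} X_t/\bar{X}_t = X_T/\bar{X}_T$ deterministically. NUPBR$_{\rm loc}$ gives $\mathcal{X}_T$ bounded in $L^0$, and division by the a.s.-positive a.s.-finite $\bar{X}_T$ preserves this, which is precisely DSV for $\bar{X}$.

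The crux of the proof is the switching-plus-$\liminf$ identity in part (a): one must carefully verify that the switched process $X'$ freezes the ratio $X'_s/\hat{X}_s$ at the value $X_t/\hat{X}_t$ for $s \in [t,\hat{T})$ so that its left $\liminf$ at $\hat{T}$ recovers exactly this value. Once this identity is in hand, parts (b) and (c) follow the same template with shorter computations, and the entire argument reduces to elementary manipulations of $L^0$-bounded families under multiplication by a.s.-finite random variables.
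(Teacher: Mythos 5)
Your proofs of parts (b) and (c) are correct and take exactly the paper's route: switch at the deterministic time via $sw^{t,\Omega}(X,\bar X)$, read off $X'_T=(X_t/\bar X_t)\bar X_T$, and use that $L^0$-boundedness is preserved under multiplication/division by the single a.s.-finite, a.s.-positive random variable $\bar X_t/\bar X_T$ (resp.\ $\bar X_T$). Nothing to add there.

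Part (a) has a genuine gap. Your switching step and the identity $\liminf_{s\upuparrows\hat T}X'_s/\hat X_s=X_t/\hat X_t$ on $\{t<\hat T\}$ match the paper, and they correctly give $L^0$-boundedness of $\{X_t\Ind_{\{t<\hat T\}}:X\in\mathcal X\}$. To finish you then assert that ``$\hat X$ is absorbed at $0$ once it hits (as is natural in the SPD setting)''. This is not an assumption of the SPD framework, and the paper explicitly contradicts it: Definition~\ref{def} only imposes \eqref{a}, \eqref{c}, \eqref{d}, \eqref{B}, and Remark~\ref{rmk 2}, 2) exhibits an SPD (namely $\mathcal X^{?}=\{X\Ind_{\llbracket 0,1\llbracket\cup\llbracket 2,\infty\rrbracket}:X\in\mathcal X\}$) for which $0$ is \emph{not} an absorbing state. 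The whole point of Corollary~\ref{cor 2} and Theorem~\ref{thm 2} is that absorption at $0$ is an extra hypothesis that must be imposed separately. What \eqref{B} actually gives you is only $\{\hat X_t=0\}\subseteq\{X_t=0\}$, i.e.\ $X_t=X_t\Ind_{\{\hat X_t>0\}}$; since $\{t<\hat T\}$ is in general a strict subset of $\{\hat X_t>0\}$, your argument leaves the event $\{\hat T\le t\}\cap\{\hat X_t>0\}$ unhandled. The paper's own proof argues by contraposition, passing directly from unboundedness of $\mathcal X_t$ to unboundedness of the restricted family $\mathcal X_t\Ind_{\{t<\hat T\}}/\hat X_t$ and then applying the same switch; it never invokes absorption. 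You should drop the absorption claim and instead justify (or simply mirror the paper's step) why the relevant unboundedness survives restriction to $\{t<\hat T\}$.
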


\begin{proof}
	(a) Let $\mathcal{X}$ be an SPD and $\hat{X}$ as in \eqref{B}. Suppose that NUPBR$_{\rm loc}$ is not satisfied and let $t \in [0,\infty)$ be such that $\mathcal{X}_t$ is not bounded in $L^0$. Then also $\mathcal{X}_t \Ind_{\{ t< \hat{T}\}}$ and hence $\mathcal{X}_t(\Ind_{\{ t< \hat{T}\}}/\hat{X}_t) := \{X_t\Ind_{\{ t< \hat{T}\}}/\hat{X}_t : X \in \mathcal{X}\}$ are not bounded in $L^0$. Fix $X \in \mathcal{X}$; then by \eqref{d}, $Y:= (X_{t \wedge \cdot}/\hat{X}_t) \hat{X}_{t \vee \cdot}$ is an element of $\mathcal{X}$. Note that \mbox{$\liminf_{s \upuparrows \hat{T}} (Y_s/\hat{X}_s) = X_t/\hat{X}_t$} on $\{t<\hat{T}\}$ and therefore we have $\liminf_{s \upuparrows \hat{T}} (Y_s/\hat{X}_s) \geq X_t\Ind_{\{ t< \hat{T}\}}/\hat{X}_t$. It follows that the set \mbox{$\{ \liminf_{s \upuparrows \hat{T}} (X_s/\hat{X}_s) : X\in \mathcal{X}\}$} is not bounded in $L^0$ either and hence DSV for $\hat{X}$ does not hold.
	
	(b) The \lq\lq only if\rq\rq{} direction is clear. For the \lq\lq if\rq\rq{} direction, let $\mathcal{X}$ be an SPP which has bounded time horizon $T$ and let $\bar{X}$ be as in \eqref{b}. Suppose further that $\mathcal{X}_T$ is bounded in $L^0$. Fix $t \in [0,T]$ and $X \in \mathcal{X}$; then by \eqref{d}, $Y:= (X_{t \wedge \cdot}/\bar{X}_t) \bar{X}_{t \vee \cdot}$ is an element of $\mathcal{X}$. Since $Y_T = (X_t/\bar{X}_t)\bar{X}_T$, we have $\mathcal{X}_t \subseteq \mathcal{X}_T (\bar{X}_t/\bar{X}_T)$ and due to $\bar{X}>0$ \Pas, it follows that $\mathcal{X}_t$ is bounded in $L^0$ as well. Since $\mathcal{X}_t = \mathcal{X}_T$ for every $t\geq T$, we conclude.
	
	(c) Let $\mathcal{X}$ be an SPP which has bounded time horizon $T$ and $\bar{X}$ as in \eqref{b}. Then \mbox{$\bar{T} := \inf\{ t\geq0: \bar{X}_t = 0 \} = +\infty$} \Pas{} and for each $X \in \mathcal{X}$, \mbox{$\liminf_{s \upuparrows \bar{T}} (X_s/\bar{X}_s) = X_T/\bar{X}_T$}. Since $0<\bar{X}_T<\infty$ \Pas, it follows that \mbox{$\{ \liminf_{s \upuparrows \bar{T}} (X_s/\hat{X}_s) : X\in \mathcal{X}\} = \mathcal{X}_T/\bar{X}_T$} is bounded in $L^0$ if and only if $\mathcal{X}_T$ is, and hence we conclude by (b).
\end{proof}

For any function $z : [0, \infty) \rightarrow \mathbb{R}$, set $\underline{z}(\infty) := \liminf_{t \rightarrow \infty} z(t)$.

\begin{lemma}\label{lemma liminf}
	Let $x,y : [0,\infty) \rightarrow \mathbb{R}$ be nonnegative. Then $\underline{x}(\infty)  \underline{y}(\infty) \leq \underline{xy}(\infty)$ holds under each of the following conditions:
	\begin{enumerate}[$\rm (a)$]
		\item Both $\underline{x}(\infty) < \infty$ and $\underline{y}(\infty)  < \infty$.
		\item $\underline{x}(\infty) = \infty$ and $\underline{y}(\infty) > 0$.
		\item $0<\underline{y}(\infty) <\infty$.
	\end{enumerate}
\end{lemma}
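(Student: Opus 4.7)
The plan is to handle the three cases separately, reducing (c) to (a) and (b) via a case split on whether $\underline{x}(\infty)$ is finite. The entire lemma is a routine $\varepsilon$-argument about liminfs of nonnegative functions; the only delicacy is that the hypotheses (a)--(c) are precisely designed to avoid the indeterminate form $0 \cdot \infty$, and one should exploit this carefully.

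For (a), the trivial subcase is when $\underline{x}(\infty) = 0$ or $\underline{y}(\infty) = 0$: then the left-hand side is $0$, and nonnegativity of $xy$ gives the inequality for free. Otherwise both liminfs lie in $(0, \infty)$, and I would fix $\varepsilon \in \bigl(0, \min(\underline{x}(\infty), \underline{y}(\infty))\bigr)$. By the definition of $\liminf_{t\to\infty}$, there exists $T$ such that for every $t \geq T$ we have $x(t) > \underline{x}(\infty) - \varepsilon > 0$ and $y(t) > \underline{y}(\infty) - \varepsilon > 0$. Multiplying these strictly positive lower bounds and taking the liminf yields $\underline{xy}(\infty) \geq (\underline{x}(\infty) - \varepsilon)(\underline{y}(\infty) - \varepsilon)$, and letting $\varepsilon \downarrow 0$ gives the conclusion.

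For (b), I use $\underline{y}(\infty) > 0$ to choose some $\delta \in (0, \underline{y}(\infty))$ and $T'$ with $y(t) > \delta$ for all $t \geq T'$. Then for any $M > 0$, by $\underline{x}(\infty) = \infty$ there exists $T \geq T'$ with $x(t) > M/\delta$ for $t \geq T$, hence $x(t) y(t) > M$ eventually and thus $\underline{xy}(\infty) \geq M$. Since $M$ was arbitrary, $\underline{xy}(\infty) = \infty = \underline{x}(\infty)\underline{y}(\infty)$ under the natural convention $\infty \cdot c = \infty$ for $c > 0$. For (c), I split on whether $\underline{x}(\infty)$ is finite: if yes, invoke (a); if no, then $\underline{x}(\infty) = \infty$ together with the hypothesis $\underline{y}(\infty) > 0$ places us in the setting of (b).

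There is no genuine obstacle. The only care required is the bookkeeping around the edge cases where one of the liminfs vanishes or is infinite; this is precisely why the statement breaks into three cases and excludes the situation $\underline{x}(\infty) = 0$, $\underline{y}(\infty) = \infty$ (and its symmetric counterpart), where the desired inequality would become the meaningless $0 \cdot \infty \leq \underline{xy}(\infty)$.
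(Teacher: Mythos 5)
Your proof is correct and follows essentially the same route as the paper: case (a) by an $\varepsilon$-argument bounding $x$ and $y$ below eventually, case (b) by observing that a positive eventual lower bound on $y$ forces $xy \to \infty$, and case (c) by a case split reducing to (a) or (b). The paper phrases (a) as a proof by contradiction (in which the positivity of both liminfs is extracted automatically from the contradiction hypothesis) whereas you argue directly and dispose of the $\underline{x}(\infty)=0$ or $\underline{y}(\infty)=0$ subcase upfront, but this is a cosmetic difference, not a genuinely different method.
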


\begin{proof}
	For (a), suppose to the contrary that $c := \underline{x}(\infty)  \underline{y}(\infty) - \underline{xy}(\infty)>0$. Since $x,y$ are nonnegative, $\underline{xy}(\infty)\geq0$ and hence it follows that $\underline{x}(\infty), \underline{y}(\infty) >0$. Take $\epsilon>0$ such that $\epsilon < \min (\underline{x}(\infty), \underline{y}(\infty), c/(\underline{x}(\infty) + \underline{y}(\infty))$. By definition, there exists a $T<\infty$ such that we have $x(t) \geq \underline{x}(\infty) -\epsilon >0$ and $y(t) \geq \underline{y}(\infty) - \epsilon >0$ for all $t \geq T$. Hence for all $t\geq T$, $x(t)  y(t) > \underline{x}(\infty) \underline{y}(\infty) - \epsilon(\underline{x}(\infty) + \underline{y}(\infty))$ and therefore $\underline{xy}(\infty) > \underline{x}(\infty) \underline{y}(\infty) - c$ which is a contradiction.
	
	For (b), note that $\underline{x}(\infty) = \infty$ means that $\lim_{t \rightarrow \infty} x(t) = \infty$. Also, $\underline{y}(\infty) > 0$ implies that there exist a $T<\infty$ and a $c>0$ such that $y(t) \geq c$ for all $t \geq T$. It follows that $\lim_{t \rightarrow \infty} x(t) y(t) = \infty$. 
	
	For (c), if $\underline{x}(\infty) < \infty$, then the statement follows from (a), whereas if \mbox{$\underline{x}(\infty) = \infty$}, then the statement follows from (b). This finishes the proof.
\end{proof}

\begin{remark}
	\textbf{1)} For the sake of completeness, we remark that if $\underline{x}(\infty) = \infty$ and $\underline{y}(\infty) = 0$, then $\underline{xy}(\infty)$ can take any value in $[0,\infty]$. Hence depending on the convention for $\infty \cdot 0$, one might or might not assert $\underline{x}(\infty) \underline{y}(\infty) \leq \underline{xy}(\infty)$ in this case.
	
	\textbf{2)} If we replace $[0,\infty)$ by $[0,T)$ for some $T<\infty$ and $\liminf_{t \rightarrow \infty}$ by $\liminf_{t \upuparrows T}$, analogous results hold as well.
\end{remark}

\begin{lemma}\label{lemma 5}
	Let $X$ be a nonnegative supermartingale on $[0,\infty)$ which is RC in probability, and let the underlying filtration satisfy the usual conditions. Then $X$ has a \Pas{} RCLL modification.
\end{lemma}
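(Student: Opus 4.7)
The plan is to reduce Lemma~\ref{lemma 5} to the classical Doob regularisation theorem, which states that under the usual conditions, a supermartingale $X$ admits a \Pas{} RCLL modification whenever its mean function $m(t) := E[X_t]$ is right-continuous. Thus the only real task is to deduce right-continuity of $m$ from the hypothesis that $X$ is RC in probability.

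For the right-continuity of $m$, fix $t \in [0,\infty)$ and let $t^n \downarrow t$. The supermartingale property gives $m(t^n) \leq m(t)$ for all $n$, hence $\limsup_{n} m(t^n) \leq m(t)$. Conversely, by RC in probability, $X_{t^n} \to X_t$ in probability, so every subsequence has a further subsequence $(t^{n_k})$ along which $X_{t^{n_k}} \to X_t$ \Pas; since $X \geq 0$, Fatou's lemma yields $m(t) = E[\liminf_k X_{t^{n_k}}] \leq \liminf_k m(t^{n_k})$. Applying this subsequence-of-subsequence argument to any subsequence of $(m(t^n))$ shows $\liminf_n m(t^n) \geq m(t)$, so $\lim_n m(t^n) = m(t)$. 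Thus $m$ is right-continuous.

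With $m$ right-continuous and the usual conditions in force, I invoke the standard regularisation theorem (see e.g.\ Dellacherie--Meyer, or \cite[Theorem~VI.1.2]{JacodShiryaev03}): for a countable dense set $D \subseteq [0,\infty)$, Doob's upcrossing inequality applied to $X$ restricted to $D \cap [0,T]$ (for each $T < \infty$) shows that outside a \Pas{} null set $N$, the right limits $X^+_t(\omega) := \lim_{s \downarrow t,\, s \in D} X_s(\omega)$ and corresponding left limits exist for every $t$. Setting $X^+_t := 0$ on $N$ gives an RCLL process, adapted by right-continuity of the filtration (and completeness absorbs the null set). Finally one checks $X^+_t = X_t$ \Pas{} for each $t$: Fatou in the conditional expectation shows that $X^+$ is a nonnegative supermartingale with $E[X^+_t] \leq m(t)$, while the supermartingale inequality applied across $t$ and points of $D$ approaching $t$ from the right, together with the right-continuity of $m$, yields $E[X^+_t] = m(t)$ and $X^+_t \leq X_t$ \Pas; equality in expectation combined with the pointwise inequality forces $X^+_t = X_t$ \Pas

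The main obstacle is really only the first step (mean right-continuity), since everything else is standard regularisation machinery; the subtle point there is that convergence in probability together with the one-sided inequality $m(t^n) \leq m(t)$ coming from the supermartingale property is exactly what makes the Fatou-based lower bound combine with the trivial upper bound to force a true limit, without requiring any uniform integrability hypothesis beyond nonnegativity.
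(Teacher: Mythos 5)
Your proof is correct and follows essentially the same route as the paper: both reduce the claim to showing that the mean function $t\mapsto E[X_t]$ is right-continuous and then invoke the standard Doob regularisation theorem (Protter, Chapter I, Theorem~9 in the paper's citation). The only cosmetic difference is in the lower bound $\lim_n E[X_{t^n}]\geq E[X_t]$ --- you use Fatou along an a.s.-convergent subsequence, while the paper uses a direct $\epsilon$-estimate exploiting uniform integrability of the single variable $X_t$; both are fine, and your subsequent re-derivation of the regularisation theorem is superfluous once it is cited.
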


\begin{proof}
	By Protter \cite[Chapter I, Theorem~9]{Pro12}, it is sufficient to show that the function $t \mapsto E[X_t]$ is right-continuous. Fix $t \in [0,\infty)$ and let $(t^n)_{n \in \mathbb{N}}$ be a sequence in $[0,\infty)$ with $t^n \downarrow t$; then for each $n \in \mathbb{N}$, $E[X_{t^n}]\leq E[X_t]$ and since $n \mapsto E[X_{t^n}]$ is increasing because $X$ is a supermartingale, it converges. By the right-continuity of $X$ in probability, there is an $N$ for any $\epsilon>0$ such that for all $n\geq N$, we have $P[|X_t - X_{t^n}| >\epsilon]<\epsilon$. It follows due to $X\geq 0$ that \mbox{$E[X_{t^n}] \geq E[X_t] - \epsilon - \sup\{ E[X_t \Ind_A] : P[A]<\epsilon \}$} for $n\geq N$. But since $X_t$ is integrable, $\lim_{\epsilon \rightarrow 0} \sup\{ E[X_t \Ind_A] : P[A]<\epsilon \} = 0$ and hence $\lim_{n \rightarrow \infty} E[X_{t^n}]\geq E[X_t]$. It follows that $\lim_{n \rightarrow \infty}E[X_{t^n}] = E[X_t]$ and hence we conclude.
\end{proof}

\begin{remark}
	For completeness, we remark that Lemma~\ref{lemma 5} holds also for not necessarily nonnegative $X$.
\end{remark}

\begin{example}\label{ex 3}
	We present a simple deterministic example for an SPD $\mathcal{X}$ with $\hat{X}$ as in \eqref{B} which satisfies NUPBR$_{\rm loc}$, but does not satisfy DSV for $\hat{X}$. Consider the interval $[0,\infty)$ and let $\hat{X} := \Ind_{[0,1)}$ and $\tilde{X} := \sum_{n = 1}^\infty n \Ind_{[(n-1)/n, n/(n+1))}$. Let $\mathcal{X}$ be the smallest set containing $\hat{X}$ and $\tilde{X}$ such that \eqref{c} and \eqref{d} are satisfied. Then $\mathcal{X}$ is an SPD with $\hat{T} \equiv 1$. It is straightforward to check by induction that for each $X \in \mathcal{X}$, $X \leq \tilde{X}$ and hence $\mathcal{X}_t$ is bounded in $L^0$ for all $t \in [0,\infty)$. On the other hand, $\liminf_{t \upuparrows \hat{T}} (\tilde{X}_t / \hat{X}_t) = \infty$ and hence DSV for $\hat{X}$ does not hold.
\end{example}

\begin{lemma} \label{lemma 7}
	Let $\mathcal{A}$ be a $\sigma$-algebra on $\Omega$ and $\mathcal{V} = \{ V_1, \dots, V_K\}$ for some $K \in \mathbb{N}$ a finite partition of $\Omega$ into pairwise disjoint (not necessarily $\mathcal{A}$-measurable) sets. Then 
	$$
	\mathcal{A} \vee \sigma(\mathcal{V}) = \sigma(\mathcal{A} \cup \mathcal{V}) = \bigg\{\bigcup_{k = 1}^K (V_k \cap A_k): A_k \in \mathcal{A}, k = 1, \dots, K \bigg\} =: \mathcal{A}'.
	$$
\end{lemma}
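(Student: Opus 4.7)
The plan is to note first that $\mathcal{A}\vee\sigma(\mathcal{V})=\sigma(\mathcal{A}\cup\mathcal{V})$ is true by definition, so the real content is the equality $\sigma(\mathcal{A}\cup\mathcal{V})=\mathcal{A}'$. I would prove this by mutual inclusion.

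For $\mathcal{A}'\subseteq\sigma(\mathcal{A}\cup\mathcal{V})$, I would just remark that each $V_k$ lies in $\mathcal{V}\subseteq\sigma(\mathcal{A}\cup\mathcal{V})$ and each $A_k\in\mathcal{A}\subseteq\sigma(\mathcal{A}\cup\mathcal{V})$, so finite intersections $V_k\cap A_k$ and their finite union also lie in $\sigma(\mathcal{A}\cup\mathcal{V})$.

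For the reverse inclusion, the strategy is to show that $\mathcal{A}'$ is itself a $\sigma$-algebra containing $\mathcal{A}\cup\mathcal{V}$; then $\sigma(\mathcal{A}\cup\mathcal{V})\subseteq\mathcal{A}'$ follows automatically. Containment is immediate: an arbitrary $A\in\mathcal{A}$ equals $\bigcup_{k=1}^K(V_k\cap A)$ since $\{V_k\}$ is a partition of $\Omega$, and an arbitrary $V_j\in\mathcal{V}$ equals $\bigcup_{k=1}^K(V_k\cap A_k)$ with $A_j:=\Omega$ and $A_k:=\emptyset$ for $k\neq j$. To verify the $\sigma$-algebra axioms, I would exploit the disjointness of the partition crucially: $\Omega=\bigcup_{k}(V_k\cap\Omega)\in\mathcal{A}'$; for complements, given $B=\bigcup_k(V_k\cap A_k)$, the disjointness of $\{V_k\}$ gives $B^c=\bigcup_k(V_k\cap A_k^c)\in\mathcal{A}'$; for countable unions, if $B^n=\bigcup_k(V_k\cap A_k^n)$ then interchanging the order of union yields
$$\bigcup_{n\in\mathbb{N}}B^n=\bigcup_{k=1}^K\Bigl(V_k\cap\bigcup_{n\in\mathbb{N}}A_k^n\Bigr)\in\mathcal{A}',$$
since each $\bigcup_n A_k^n\in\mathcal{A}$.

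There is no real obstacle here; the only subtlety worth pointing out is that the formula $B^c=\bigcup_k(V_k\cap A_k^c)$ uses both that $\{V_k\}$ partitions $\Omega$ (so every $\omega$ is in exactly one $V_k$) and that the $V_k$ are pairwise disjoint (so that $\omega\notin B$ forces $\omega\notin A_{k(\omega)}$ for the unique $k(\omega)$ with $\omega\in V_{k(\omega)}$). With these two inclusions, the claimed chain of equalities is established.
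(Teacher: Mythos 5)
Your proof is correct and follows essentially the same route as the paper's: both establish the result by showing $\mathcal{A}'$ is a $\sigma$-algebra containing $\mathcal{A}\cup\mathcal{V}$ (with the complement step relying on the partition's disjointness) and then invoking minimality. The only cosmetic difference is that you argue the complement identity pointwise whereas the paper manipulates set expressions directly.
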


\begin{proof}
	First we verify that $\mathcal{A}'$ is a $\sigma$-algebra. It is clear that $\emptyset \in \mathcal{A}'$, and for any sequence $(B_j)_{j \in \mathbb{N}} \subseteq \mathcal{A}'$, each $B_j$ can be written as $B_j = \bigcup_{k = 1}^K (V_k \cap A_{j, k})$ for some $A_{j,k} \in \mathcal{A}$, \mbox{$k = 1, \dots, K$}, and hence $\bigcup_{j\in\mathbb{N}} B_j =  \bigcup_{k = 1}^K (V_k \cap \bigcup_{j\in\mathbb{N}} A_{j,k}) \in \mathcal{A}'$. It remains to show that for any $B\in\mathcal{A}'$, $B^c$ is an element of $\mathcal{A}'$. So let $B \in \mathcal{A}'$; then $B = \bigcup_{k = 1}^K (V_k \cap A_{k})$ for some $A_k \in \mathcal{A}$, $k = 1, \dots, K$. Because $V_1, \dots, V_K$ form a pairwise disjoint partition of $\Omega$, we have $B^c = \bigcup_{k = 1}^K (V_k \cap B^c)$, and we claim that 
	\begin{equation}\label{eq 2}
		V_k \cap B^c = V_k \cap A^c_k \mspace{20mu} \text{for } k = 1, \dots, K.
	\end{equation}
	This then implies that $B^c = \bigcup_{k = 1}^K (V_k \cap A_k^c)$ is in $\mathcal{A}'$. To argue \eqref{eq 2}, fix $k$ and note that $V^c_k = \bigcup_{j \neq k} V_j$ because $V_1, \dots, V_K$ form a pairwise disjoint partition of $\Omega$. Then we can write
	$$
	V_k \cap A^c_k
	= V_k \cap \big(V_k \cap (V^c_k \cup A^c_k)\big)
	= V_k \cap \big( (V_k \cap A_k) \cup V^c_k\big)^c
	= V_k \cap \big( (V_k \cap A_k) \cup \bigcup_{j \neq k} V_j\big)^c.
	$$
	Note that $\bigcap_{j \neq k} V_j \supseteq \bigcap_{j \neq k} (V_j \cap A_j)$ so that \mbox{$V_k \cap A_k^c \subseteq V_k \cap (\bigcup_{j=1}^K (V_j \cap A_j))^c = V_k \cap B^c$}. On the other hand, we also have of course that $\bigcup_{j=1}^K (V_j \cap A_j) \supseteq V_k \cap A_k$ so that we get \mbox{$V_k \cap B^c \subseteq V_k \cap (V_k \cap A_k)^c = V_k \cap A_k^c$}. This proves \eqref{eq 2}.
	
	So $\mathcal{A}'$ is a $\sigma$-algebra and hence $\mathcal{A} \subseteq \mathcal{A}'$ and $\mathcal{V} \subseteq \mathcal{A}'$ yield \lq\lq $\subseteq$\rq\rq. Finally, \lq\lq $\supseteq$\rq\rq{} is clear since every $\sigma$-algebra containing $\mathcal{V}$ and $\mathcal{A}$ contains all elements of the form $\bigcup_{k = 1}^K (V_k \cap A_k)$ for $A_k \in \mathcal{A}, k = 1, \dots, K$.
\end{proof}

\begin{prop}\label{char t}
	Let $\mathcal{X}$ be an SP and $\widetilde{T}$ as in Proposition~\ref{prop 1}. Suppose $\tilde{\tau}$ is a positive random time such that for each $s < t$ with $P[\tilde{\tau} \in (s, t]] > 0$, there exists an $\hat{X}^{(s,t)} \in \mathcal{X}$ such that $\hat{X}^{(s,t)}$ is positive on $[0,s]$ $Q^{(s,t)}$-a.s., and each $X \in \mathcal{X}$ is zero on $[t, \infty)$  $Q^{(s,t)}$-a.s., where $Q^{(s,t)} := P[\thinspace\cdot \thinspace| \tilde{\tau} \in (s, t]]$. Then $\tilde{\tau} = \widetilde{T}$ \Pas
\end{prop}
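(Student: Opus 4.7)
The plan is to prove $\tilde\tau = \widetilde T$ \Pas{} by establishing the two inequalities $\widetilde T \le \tilde\tau$ and $\tilde\tau \le \widetilde T$ separately, each by contradiction. Each inequality will use exactly one half of the two-part hypothesis, and the existence of the approximating sequence $(\hat X^n)_{n \in \mathbb{N}} \subseteq \mathcal{X}$ with $\hat T^n \nearrow \widetilde T$ provided by Proposition~\ref{prop 1} will play a crucial role in transferring information between $\widetilde T$ and the processes in $\mathcal{X}$.

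For $\widetilde T \le \tilde\tau$ \Pas, suppose to the contrary that $P[\widetilde T > \tilde\tau] > 0$. By density of rationals, there are rationals $s < t$ such that $B := \{\tilde\tau \in (s,t],\,\widetilde T > t\}$ satisfies $P[B] > 0$; in particular $P[\tilde\tau \in (s,t]] > 0$, so the hypothesis applies. The ``each $X \in \mathcal{X}$ is zero on $[t,\infty)$ $Q^{(s,t)}$-a.s.'' clause, applied to $X = \hat X^n$, gives $\hat T^n \le t$ $Q^{(s,t)}$-a.s.\ for every $n \in \mathbb{N}$. Letting $n \to \infty$ and using $\hat T^n \nearrow \widetilde T$ yields $\widetilde T \le t$ $Q^{(s,t)}$-a.s., which contradicts $Q^{(s,t)}[B] > 0$.

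For $\tilde\tau \le \widetilde T$ \Pas, suppose to the contrary that $P[\tilde\tau > \widetilde T] > 0$. Again by density of rationals there exist $r < s$ and a $t > s$ (possibly $t = \infty$, interpreting $(s,\infty]$ to include $\{\tilde\tau = \infty\}$) such that $B := \{\widetilde T \le r,\,\tilde\tau \in (s,t]\}$ has $P[B] > 0$. The ``$\hat X^{(s,t)}$ strictly positive on $[0,s]$ $Q^{(s,t)}$-a.s.'' clause then yields $\hat X^{(s,t)}_r > 0$ $P$-a.s.\ on $\{\tilde\tau \in (s,t]\}$. On the other hand, $\hat X^{(s,t)} \in \mathcal{X}$ combined with the defining property $X = X\Ind_{\llbracket 0,\widetilde T\llbracket}$ \Pas{} from Proposition~\ref{prop 1} gives $\hat X^{(s,t)}_r = 0$ $P$-a.s.\ on $\{\widetilde T \le r\}$. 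These two statements are contradictory on $B$, completing the argument.

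The main obstacle I expect is the case $P[\widetilde T \le r,\,\tilde\tau = \infty] > 0$ in the second step. If the hypothesis is read as allowing $t = \infty$ (with $(s,\infty]$ extended in the natural way and the ``zero on $[\infty,\infty)$'' clause reading vacuously), then the positivity half of the hypothesis still applies with $t = \infty$ and the contradiction goes through uniformly. If instead $t$ must be finite, one has to decompose $\{\tilde\tau > s\}$ into countably many events of the form $\{\tilde\tau \in (s_n, t_n]\}$ with rational $s_n, t_n$ covering the finite values, and treat $\{\tilde\tau = \infty\}$ separately via a limiting argument on $s$; this should still work but is notationally heavier. Either way, the logical structure and the two-sided contradiction mechanism remain the same.
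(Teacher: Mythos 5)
Your proposal is correct and follows essentially the same route as the paper: a two-sided contradiction in which one direction applies the ``vanishing on $[t,\infty)$'' clause to the sequence $(\hat{X}^n)$ from Proposition~\ref{prop 1}, and the other plays the ``positive on $[0,s]$'' clause off against the property $X = X\Ind_{\llbracket 0,\widetilde{T}\llbracket}$ \Pas{} The $\tilde{\tau}=\infty$ obstacle you flag in the second direction is resolved exactly as you anticipate: the paper simply sets $t=\infty$ and works with $Q^{(t,\infty)} := P[\,\cdot\,\vert\,\tilde{\tau}\in(t,\infty]]$, so no covering or limiting argument is needed.
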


\begin{proof}
	Suppose first that $P[\tilde{\tau} < \widetilde{T}]>0$ and let $t \in (0,\infty)$ be such that \mbox{$P[\tilde{\tau} < t < \widetilde{T}]>0$}. Then $P[\tilde{\tau} \in (0, t]] > 0$ and hence each $X \in \mathcal{X}$ is zero on $[t, \infty)$  $Q^{(0,t)}$-a.s. But by \mbox{Proposition~\ref{prop 1}}, there exists a sequence $(\hat{X}^n)_{n \in \mathbb{N}} \subseteq \mathcal{X}$ with \mbox{$\hat{T}^n := \inf\{t\geq0: \hat{X}^n=0\}$} such that $\hat{T}^n \nearrow \widetilde{T}$ as $n \rightarrow \infty$. So there is $n \in \mathbb{N}$ such that $\hat{T}^n>t$ holds with positive $Q^{(0,t)}$-probability, which is a contradiction. Suppose now that $P[\widetilde{T} < \tilde{\tau}]>0$ and let $t \in (0,\infty)$ be such that $P[\widetilde{T} <t < \tilde{\tau}]>0$. Then $P[\tilde{\tau} \in (t, \infty]] > 0$ and hence there exists an $\hat{X}^{(t,\infty)}$ which is positive on $[0, t]$ $Q^{(t,\infty)}$-a.s. Proposition~\ref{prop 1} on the other hand implies that for each $X  \in \mathcal{X}$, $X_t\Ind_{\{\widetilde{T}\leq t\}} = 0$ on $\{\tilde{\tau} \in (t,\infty]\}$ \Pas, and in particular $\hat{X}^{(t,\infty)}_t \Ind_{\{\widetilde{T}\leq t\}} = 0$ holds $Q^{(t,\infty)}$-a.s. due to $Q^{(t, \infty)} \ll P$. But since $P[\{ \widetilde{T} \leq t\} \cap \{ t < \tilde{\tau} \}] >0$, we have $Q^{(t, \infty)}[ \widetilde{T} \leq t]>0$, which is a contradiction to the $Q^{(t,\infty)}$-a.s. positivity of $\hat{X}^{(t,\infty)}$ on $[0,t]$. Therefore $P[\tilde{\tau} < \widetilde{T}] = P[\widetilde{T} < \tilde{\tau}] = 0$ and hence $\tilde{\tau} = \widetilde{T}$ \Pas{}
\end{proof}
	
\end{appendix}

\section*{Acknowledgements}

The author thanks Martin Schweizer for multiple thorough proofreading and many invaluable comments, remarks and suggestions on initial versions of this paper. The author also thanks Robert Crowell for linguistic support, and Matteo Burzoni, \mbox{Andrea Gabrielli}, Florian Krach, Chong Liu, David Martins and Ariel Neufeld for discussions and general support. 

Financial support by the ETH Foundation via the Stochastic Finance Group (SFG) at ETH Zurich and by the Swiss Finance Institute (SFI) is gratefully acknowledged.

\bibliographystyle{amsplain}
\frenchspacing
\bibliography{bib}

\end{document}